\tikzset{
  commutative diagrams/.cd, 
  arrow style=tikz, 
  diagrams={>=stealth},
  ampersand replacement=\&
}
\def\R{\mathbb{R}}
\newcommand{\longto}{\longrightarrow}
\DeclareMathOperator{\grad}{grad}
\DeclareMathOperator{\curl}{curl}
\def\divgn{\operatorname{div}}
\DeclareMathOperator{\rot}{rot}
\newcommand{\ainnerproduct}[2]{\langle #1, #2 \rangle}
\newcommand{\aInnerproduct}[2]{\bigl\langle #1, #2 \bigr\rangle}
\DeclarePairedDelimiterX{\norm}[1]{\lVert}{\rVert}{#1}
\newtheorem{theorem}{Theorem}
\newtheorem{lemma}[theorem]{Lemma}
\newtheorem{assumption}{Assumption}
\numberwithin{equation}{section}
\begin{document}

% For IMA journal of numerical analysis
% \DOI{DOI HERE}
% \copyrightyear{2026}
% \vol{00}
% \pubyear{2026}
% \access{Advance Access Publication Date: Day Month Year}
% \appnotes{Paper}
% \copyrightstatement{Published by Oxford University Press on behalf of the Institute of Mathematics and its Applications. All rights reserved.}
% \firstpage{1}
% \title[Short Article Title]{Article Title}

\title[Two Frameworks and Fourth-Order Implicit Schemes for Maxwell's Equations]{Two Frameworks and their Fourth Order Implicit Schemes for Time Discretization of Maxwell's Equations}

% For IMA journal of numerical analysis
% \author{Archana Arya
% \address{\orgdiv{Department of Mathematics}, \orgname{Indraprastha Institute of Information Technology, Delhi}, \orgaddress{\postcode{110020}, \state{Delhi}, \country{INDIA}}}}

% For IMA journal of numerical analysis
% \author{Kaushik Kalyanaraman %*\ORCID{0009-0006-4294-0895}
% \address{\orgdiv{Department of Mathematics}, \orgname{Indraprastha Institute of Information Technology, Delhi}, \orgaddress{\postcode{110020}, \state{Delhi}, \country{INDIA}}}}

% \authormark{Arya and Kalyanaraman}

% \corresp[*]{Corresponding author: \href{email:kaushik@iiitd.ac.in}{kaushik@iiitd.ac.in}}

% \received{Date}{0}{Year}
% \revised{Date}{0}{Year}
% \accepted{Date}{0}{Year}

\author{Archana Arya and Kaushik Kalyanaraman}
\address{Department of Mathematics, Indraprastha Institute of Information Technology, Delhi, New Delhi, 110020, India}
\email{archanaa@iiitd.ac.in, kaushik@iiitd.ac.in}

\begin{abstract}

  Our work is about energy conserving fourth-order time discretizations of a three-field formulation of Maxwell’s equations in conjunction with a spatial discretization using higher-order and compatible de Rham finite element spaces. Toward this end, we delineate two broad classes of strategies for general higher-order time discretizations which we term spatial and temporal strategies. We provide a description of these two strategies and develop fourth-order time accurate schemes in the context of our Maxwell's system. However, our description can be used to prescribe similar fourth- or even higher-order time-integration methods for any linear (or quasi-linear) system of time-dependent partial differential equations. Our organizing principle in our proposed two strategies is to Taylor expand the unknown solution in time by assuming sufficient regularity. Then, in the spatial strategy, we use Maxwell's equations themselves to replace the fourth-order time derivatives in an appropriately truncated Taylor expansion with corresponding higher-order spatial derivatives. On the other hand, in the temporal strategy, we simply use higher-order finite difference schemes for the various higher-order time derivative terms in the truncated Taylor approximation. In both cases, we then defer to a standard finite element exterior calculus manner of compatible discretization for the spatial component of the Maxwell's solution. For our proposed schemes corresponding to the two strategies, we show that they are both stable and convergent and provide some validating numerical examples in $\mathbb{R}^2$. Our main contributions are in the development of the fourth-order time discretization methods that are energy conserving using our two outlined strategies and proofs of their convergence for semi- and full-discretizations of our three-field system of Maxwell's equations.

\end{abstract}

\keywords{Error analysis; finite element exterior calculus; higher-order methods; implicit schemes; Maxwell's equations; structure preservation}

\subjclass{35Q61, 65M06, 65M12, 65M15, 65M22, 65M60, 65Z05, 78M10}

\maketitle

\section{Introduction and Preliminaries} \label{sec:introduction}

In this work, we provide fourth-order implicit time discretizations of Maxwell's equations in a three-field formulation. Our original intent was to develop an arbitrary-order time discretization for this Maxwell's system in conjunction with a compatible spatial discretization of the field variables using finite element exterior calculus (FEEC)~\cite{Arnold2018}. However, our attempt at extending an earlier work~\cite{ArKa2025} from its second-order accurate time discretization to merely a fourth-order accurate one led us to through not simply a complicated set of analysis which we believe warrants its own description that we provide hereto, but also that it helped us categorize into two classes a recipe for development of such methods. Our prescriptions therein then helped us methodically concoct these two energy conserving and implicit fourth-order in time discretization of the three-field Maxwell's system. We term these two methodologies as being a \textit{spatial} or a \textit{temporal} strategy. In our work here, we describe these two frameworks, provide the corresponding fourth-order time discretization schemes they yield, carry out their respective error analyses for both time semi-discretization and for the full error alongside a FEEC discretization of the spatial solution.

The inspirations for our two frameworks derive from~\cite{Fahs2009} for the spatial strategy and~\cite{BrTuTs2018} for the temporal strategy. Our main idea in writing out a fourth-order time discretization is rather straightforward: express the time component of the solution for each of the field variables in the Maxwell's system using a Taylor expansion (with the assumption that it is sufficiently regular) around a generic discretized time, and then use the Taylor series for deriving a higher-order and suitably truncated discretization of the first-order time derivatives in the Maxwell's equations. This requires a meaningful way to approximate each of the higher order time derivatives in the Taylor expansion and our alternatives are to: 
\begin{enumerate} 
\item either use the Maxwell's equations themselves repeatedly to switch all these time derivatives to higher order spatial derivatives, or
\item directly discretize the higher-order time derivative terms,
\end{enumerate}
in the Taylor approximation of the solution. This can be enjoined with a spatial discretization by way of, as we carry out here, finite element methods to arrive at a numerical solution for the Maxwell's equations. An earlier work~\cite{ArKa2025} already provides two flavors of second-order time implicit discretization that is also energy conserving and we seek their corresponding fourth-order generalizations here. \cite{Fahs2009} set out to provide a strategy for such a solution scheme in the context of discontinuous Galerkin methods by transforming the higher-order time derivatives to higher-order spatial derivatives. This idea is superficially similar to arbitrary-order non-oscillatory solutions for the advection schemes~\cite{MiTiTo2001}. Several of Toro's works stem from this viewpoint, however, the similarity ends merely at this idea of recasting time derivative into spatial derivatives. On the other hand, we merely carry forward Fahs's particularly simple ansatz, in itself originally credited to~\cite{Young2001} by Fahs, to its logical conclusion. This paves the way for our spatial strategy discretization that we so name because the temporal higher-order derivatives are replaced by spatial ones. On the other hand, Britt and coauthors in~\cite{BrTuTs2018} provide us with our idea for the temporal strategy, namely, that we just replace the higher-order time derivatives in a truncated Taylor approximation with suitably appropriate order finite differencing schemes. Again, using this principle, we can massage out a fourth-order time discretization that is implicit and with an energy conservation. Our underlying idea also somewhat superficially resembles or reminds one of the Cauchy-Kowalewskaya procedure which is a higher-order numerical technique in the context of generalized Riemann problems; see, for example, \cite{MoBa2020}. The similarity here is that we also transform a single time derivative in a higher-order time derivative term first by a spatial derivative from Maxwell's equation. We then diverge in our formalism by factoring the spatial derivatives through the time differentiations (given their mutual independence), and in temporal strategy finally approximate these using suitably higher-order finite difference schemes.

Our main contribution in this work is outlining and analyzing these two different energy conserving and fourth-order in time discretizations of the three-field formulation of the Maxwell's equations, showing that our proposed schemes are stable and converging in time at the correct rate of four as well as analyzing its convergence in conjunction with a compatible FEEC discretization of this Maxwell's system.

The organization of the rest of our article is as follows. In the remainder of this introductory section, we provide a brief overview of the three-field formulation of the Maxwell's equations, the two proposed schemes, a compressed and just necessary background on some of the analysis tools we use, and a formal description of the two frameworks which we have categorized for deriving such higher-order schemes. Sections~\labelcref{sec:implicit_lf4,sec:ts4} provide the detailed stability and error analysis of our two proposed schemes. Finally, in Section~\labelcref{sec:numerics}, we provide some validating numerical computations using our two schemes along with a discussion and future outlook.

\subsection{Three-field Formulation of Maxwell's Equations}

A three-field $(p, E, H)$-formulation of the Maxwell's equations consists of the governing electromagnetic equations as is usual describing the time evolution of the electric and magnetic fields $E$ and $H$ as well as an additional time-varying electrical pressure field $p$. In this formulation, $p$ may be regarded as being physically fictitious but provides a set up for a compatible (and exact) discretization of $\divgn E$. This three-field formulation is as follows: \begin{subequations}
  \begin{equation}
    \left. \begin{aligned}
      \dfrac{\partial p}{\partial t} + \nabla \cdot \varepsilon E & = 0, \\
      \nabla p + \varepsilon \dfrac{\partial E}{\partial t} - \nabla \times H &= 0, \\
      \mu \dfrac{\partial H}{\partial t} + \nabla \times E &= 0,
    \end{aligned} \right\} \text{ in } \Omega \times (0, T], \label{eqn:maxwells_eqns}
  \end{equation}
  where $\Omega \subset \R^2/\R^3$ is a domain with Lipschitz boundary $\partial \Omega$, and $T > 0$. We use the following homogeneous boundary conditions:
  \begin{equation}
    p = 0,  E \times n = 0, H \cdot n = 0 \text{ on } \partial\Omega \times (0, T], \label{eqn:BCs}
  \end{equation}
  where $n$ is the unit outward normal to $\partial \Omega$, and with the following initial conditions:
  \begin{equation}
    p(x, 0) = p_0(x), E(x, 0) = E_0(x), \text{ and } H(x, 0) = H_0(x) \text{~for~} x \in \Omega, \label{eqn:ICs}
  \end{equation}
\end{subequations}
Furthermore, the initial conditions are required to satisfy $\nabla \cdot (\varepsilon E_0) = p_0 $ and $\nabla \cdot (\mu H_0) = 0$ in $\Omega$ for consistency with the time evolution. This formulation has already been studied in earlier works such as~\cite{AdPeZi2013,AdHuZi2017,AdCaHuZi2021}, and in particular we are continuing with the development of, and by suitably generalizing, the schemes for it and their analysis as in~\cite{ArKa2025}. Consequently, all of our background in this work is already well stated in~\cite{ArKa2025} and we only provide here some essential details. As an example, we wish to restate here again that the homogeneous boundary conditions is merely for simplicity of our analysis and all our results can be extended to the non homogeneous case in a standard manner. We also wish to add that it is possible to extend our method and analysis to nonzero right-hand side functions in Equation~\eqref{eqn:maxwells_eqns} but doing so will further complicate the analysis without providing any meaningful additional insight beyond what we state in this work and prove for the zero right-hand side functions. A nonzero right-hand side for the Maxwell's system can be discretized using our principles in a straightforward manner.

The variational formulation for Equations~\eqref{eqn:maxwells_eqns} to~\eqref{eqn:ICs} is as following: for $t \in (0, T]$, find $(p(t), E(t), H(t)) \in \mathring{H}^1_{\varepsilon^{-1}}(\Omega) \times \mathring{H}_{\varepsilon}(\curl; \Omega) \times \mathring{H}_{\mu}(\divgn; \Omega)$ such that: \begin{subequations}
  \begin{alignat}{2}
    \aInnerproduct{\dfrac{\partial p}{\partial t}}{\widetilde{p}} - \aInnerproduct{ \varepsilon E}{\nabla \widetilde{p}} &=0, &&\quad \widetilde{p} \in \mathring{H}^1_{\varepsilon^{-1}}(\Omega), \label{eqn:maxwell_p_wf} \\
    \aInnerproduct{\nabla p}{\widetilde{E}} + \aInnerproduct{\varepsilon \dfrac{\partial E}{\partial t}}{\widetilde{E}} - \aInnerproduct{H}{\nabla \times \widetilde{E}} &= 0, &&\quad \widetilde{E} \in \mathring{H}_{\varepsilon}(\curl; \Omega), \label{eqn:maxwell_E_wf} \\
    \aInnerproduct{\mu \dfrac{\partial H}{\partial t}}{\widetilde{H}} + \aInnerproduct{\nabla \times E}{\widetilde{H}}, &= 0, &&\quad \widetilde{H} \in \mathring{H}_{\mu}(\divgn; \Omega), \label{eqn:maxwell_H_wf}
  \end{alignat}
\end{subequations}
The stability and uniqueness of solution for this variational formulation can be found in~\cite[Theorem 3.1, Remark 3.1]{ArKa2025}. As with the description therein, here too we assume that the solution $(p, E, H)$ to the variational formulation has sufficient regularity. 

With this brief background, we present below our two fourth-order time implicit discretizations. We denote the first scheme as LF$_4$ and it is derived using our spatial strategy. LF$_4$ is a generalization of the second-order time implicit LF$_2$ scheme of~\cite[Equations 1.4(a)--(c)]{ArKa2025}. We label our second scheme as TS$_4$ and it is derived using our temporal strategy. TS$_4$ should be reminiscent of a fourth-order Crank Nicholson scheme, for example, as in~\cite[Equation 6]{BrTuTs2018}.

To specify these schemes, we fix some initial notation. We consider time $[0, T]$ to be uniformly discretized with a timestep $\Delta t > 0$. Then, $t^n \coloneq n \Delta t$ or $t^{n + \frac{1}{2}} \coloneq (n + 1/2) \Delta t$ for $n = 0, 1, \dots, N$ where $N \Delta t \approx T$. We also set $u^n \coloneq u(x, t^n)$ (and likewise for $u^{n + \frac{1}{2}}$) for $x \in \Omega$ where $u = \{p, E, H\}$ as appropriate.

\subsection*{LF$_4$ scheme} Find $(p(t^{n + 1/2}), E(t^{n + 1/2}), H(t^n)) \in \mathring{H}^1_{\varepsilon^{-1}}(\Omega) \times \mathring{H}_{\varepsilon}(\curl; \Omega) \times \mathring{H}_{\mu}(\divgn; \Omega)$ such that:
 \begin{subequations}
 \begin{equation}
   \aInnerproduct{\dfrac{p^{n + \frac{1}{2}} - p^{n - \frac{1}{2}}}{\Delta t}}{\widetilde{p}} - \aInnerproduct{\dfrac{\varepsilon}{2} \left( E^{n + \frac{1}{2}} + E^{n - \frac{1}{2}} \right)}{\nabla \widetilde{p}}  + \dfrac{\Delta t^2}{12} \aInnerproduct{\dfrac{\varepsilon}{2} \nabla \nabla \cdot \left( E^{n + \frac{1}{2}} + E^{n - \frac{1}{2}} \right)}{\nabla \widetilde{p}} =0, \label{eqn:maxwell_p_lf4}
   \end{equation}\\
   \begin{multline}
    \aInnerproduct{\dfrac{1}{2} \nabla \left(p^{n + \frac{1}{2}} + p^{n - \frac{1}{2}} \right)}{\widetilde{E}} - \dfrac{\Delta t^2}{12} \aInnerproduct{\dfrac{1}{2} \nabla \left(p^{n + \frac{1}{2}} + p^{n - \frac{1}{2}} \right)}{\nabla \nabla \cdot \widetilde{E}} + \aInnerproduct{\varepsilon \dfrac{E^{n + \frac{1}{2}} - E^{n - \frac{1}{2}}}{\Delta t}}{\widetilde{E}} \\ - \aInnerproduct{\dfrac{1}{2} \left( H^{n + 1} + H^n \right)}{\nabla \times \widetilde{E}}  - \dfrac{\Delta t^2}{12} \aInnerproduct{\dfrac{1}{2} \mu^{-1}\varepsilon^{-1} \nabla \times \nabla \times \left( H^{n + 1} + H^n \right)}{\nabla \times \widetilde{E}} = 0, \label{eqn:maxwell_E_lf4} 
      \end{multline} \\
      \begin{multline}
   \aInnerproduct{\mu \dfrac{H^{n + 1} - H^n}{\Delta t}}{\widetilde{H}} +  \aInnerproduct{\dfrac{1}{2} \nabla \times \left( E^{n + \frac{1}{2}} + E^{n - \frac{1}{2}} \right)}{\widetilde{H}} \\+  \dfrac{\Delta t^2}{12} \aInnerproduct{\dfrac{1}{2} \varepsilon^{-1}\mu^{-1} \nabla \times \left( E^{n + \frac{1}{2}} + E^{n - \frac{1}{2}} \right)}{\nabla \times  \nabla \times \widetilde{H}} = 0, \label{eqn:maxwell_H_lf4}
 \end{multline}
 \end{subequations}
 for all $\widetilde{p} \in \mathring{H}^1_{\varepsilon^{-1}}(\Omega), \; \widetilde{E} \in \mathring{H}_{\varepsilon}(\curl; \Omega)$ and $\widetilde{H} \in \mathring{H}_{\mu}(\divgn; \Omega)$. In order to bootstrap the computations from the given initial conditions, we will use the following restricted version of the scheme for the first half time step for $p$ and $E$, and for the first time step for $H$:
 \begin{subequations}
   \begin{equation}
     \aInnerproduct{\dfrac{p^{\frac{1}{2}} - p_0}{\Delta t/2}}{\widetilde{p}} - \dfrac{1}{2} \aInnerproduct{\dfrac{\varepsilon}{2} \left( E^{\frac{1}{2}} + E_0 \right)}{\nabla \widetilde{p}} + \dfrac{1}{8} \cdot \dfrac{\Delta t^2}{12} \aInnerproduct{\dfrac{\varepsilon}{2} \nabla \nabla \cdot \left( E^{\frac{1}{2}} + E_0 \right)}{\nabla \widetilde{p}} = 0, \label{eqn:maxwell_p0_lf4}
     \end{equation} \\
     \begin{multline}
   \dfrac{1}{2}  \aInnerproduct{\dfrac{1}{2} \nabla \left(  p^{\frac{1}{2}} +  p_0 \right)}{\widetilde{E}} - \dfrac{1}{8} \cdot \dfrac{\Delta t^2}{12} \aInnerproduct{\dfrac{1}{2} \nabla \left(  p^{\frac{1}{2}} +  p_0 \right)}{\nabla \nabla \cdot \widetilde{E}} + \aInnerproduct{\varepsilon \dfrac{E^{\frac{1}{2}} - E_0}{\Delta t/2}}{\widetilde{E}}\\  - \aInnerproduct{\dfrac{1}{2} \left( H^1 + H_0 \right)}{\nabla \times \widetilde{E}} -\dfrac{1}{4} \cdot \dfrac{\Delta t^2}{12}  \aInnerproduct{\dfrac{1}{2} \mu^{-1}\varepsilon^{-1} \nabla \times \nabla \times \left( H^1 + H_0 \right)}{\nabla \times \widetilde{E}} = 0, \label{eqn:maxwell_E0_lf4} 
   \end{multline} \\
   \begin{multline}
     \aInnerproduct{\mu \dfrac{H^1 - H_0}{\Delta t}}{\widetilde{H}} + \dfrac{1}{2} \aInnerproduct{\dfrac{1}{2} \nabla \times \left(E^{\frac{1}{2}} + E_0 \right)}{\widetilde{H}} + \dfrac{1}{8} \cdot \dfrac{\Delta t^2}{12} \aInnerproduct{\dfrac{1}{2} \varepsilon^{-1}\mu^{-1} \nabla \times \left(E^{\frac{1}{2}} + E_0 \right)}{ \nabla \times \nabla \times  \widetilde{H}} = 0. \label{eqn:maxwell_H0_lf4}
   \end{multline}
 \end{subequations}

 \subsection*{TS$_4$ scheme} 
 
 Find $(p(t^{n + 1/2}), E(t^{n + 1/2}), H(t^n)) \in \mathring{H}^1_{\varepsilon^{-1}}(\Omega) \times \mathring{H}_{\varepsilon}(\curl; \Omega) \times \mathring{H}_{\mu}(\divgn; \Omega)$ such that:
    \begin{subequations}
    \begin{equation}
   \aInnerproduct{\varepsilon^{-1} \dfrac{p^{n + 1} - p^{n - 1}}{2 \Delta t}}{\widetilde{p}} - \dfrac{1}{6} \aInnerproduct{E^{n + 1} +4 E^n  + E^{n - 1}}{\nabla \widetilde{p}} = 0,  \label{eqn:maxwell_p_ts4}
   \end{equation}
\begin{equation}
   \dfrac{1}{6} \aInnerproduct{\nabla \left(p^{n + 1} + 4 p^n  + p^{n - 1} \right)}{\widetilde{E}}  + \aInnerproduct{\varepsilon \dfrac{E^{n + 1} - E^{n - 1}}{2 \Delta t}}{\widetilde{E}} - \dfrac{1}{6} \aInnerproduct{\nabla \times \left(H^{n + 1} + 4 H^n  + H^{n - 1} \right)}{\widetilde{E}} = 0,  \label{eqn:maxwell_E_ts4}
\end{equation}
\begin{equation}
  \aInnerproduct{\mu \dfrac{H^{n + 1} - H^{n - 1}}{2 \Delta t}}{\widetilde{H}} +  \dfrac{1}{6} \aInnerproduct{\nabla \times \left( E^{n + 1} + 4 E^n  + E^{n - 1} \right)}{\widetilde{H}} = 0.  \label{eqn:maxwell_H_ts4}
\end{equation}
\end{subequations}
 for all $\widetilde{p} \in \mathring{H}^1_{\varepsilon^{-1}}(\Omega), \; \widetilde{E} \in \mathring{H}_{\varepsilon}(\curl; \Omega)$ and $\widetilde{H} \in \mathring{H}_{\mu}(\divgn; \Omega)$. Note that this scheme requires no bootstrapping and computations can begin from the provided initial conditions.

\subsection{A Few Notes on Finite Element Spaces and Other Preliminaries}

We refer to \cite[Section 2]{ArKa2025} for a more detailed set of background for the function spaces, the de Rham sequence of finite-dimensional subspaces of these function spaces for the FEEC manner of compatible spatial discretization of the solution, and for a list of important inequalities that feature repeatedly in our analysis. However, for brevity yet completeness, we highlight some essential background here. 

To begin with, the diagrams shown next depict in $\R^2$ and $\R^3$, respectively, the de Rham complexes of function spaces for a compatible discretization of Maxwell's equations, and their finite dimensional subspaces spanned by arbitrary order polynomial functions (scalar or vector). Later on in this work, we will refer to these finite dimensional subspaces as $U_h$, $V_h$ and $W_h$.

\begin{center}
  \begin{tikzpicture}[baseline=(a).base]
    \node[scale=.9] (a) at (0,0){
      \begin{tikzcd}[column sep=3em, row sep=2em, text height=1ex, text depth=0ex, ampersand replacement=\&, every
        label/.style={font=\small, auto}]
        \mathring{H}_{\varepsilon^{-1}}^1 (\Omega) \arrow[r, -stealth, shift left, "\rot"] \arrow[d, -stealth, "\Pi_h^0"] \& \arrow[l, -stealth, shift left, "\curl"] \mathring{H}_{\varepsilon} (\divgn, \Omega) \arrow[r, -stealth, shift left, "-\divgn"] \arrow[d, -stealth, "\Pi_h^1"] \& \arrow[l, -stealth, shift
        left, "\grad"] L_{\mu}^2(\Omega) \arrow[d, -stealth, "\Pi_h^2"]  \\[2em] 
        \arrow[u, hookrightarrow, shift left, yshift = .1em] \mathring{H}_{\varepsilon^{-1}}^1(\Omega) \hspace{.5em}  \arrow[r, -stealth, shift left, yshift = -.7em, "\rot"]  \& \arrow[l, -stealth, shift left, yshift = -.7em, "\curl"] \arrow[u, hookrightarrow, shift left, yshift = .1em] \mathring{H}_{\varepsilon} (\divgn, \Omega) \arrow[r, -stealth, shift
        left, yshift = -.7em, "-\divgn"] \& \arrow[l, -stealth, shift
        left, yshift = -.7em, "\grad"] \arrow[u, hookrightarrow, shift left, yshift = .1em] \hspace{.5em} L_{\mu}^2(\Omega) \\[-2em]
        \cap \, \mathcal{P}_r^-(\Omega) \hspace{.5em}\& \cap \, \mathcal{P}_r^-(\Omega) \& \hspace{.5em} \cap \, \mathcal{P}_r^-(\Omega) 
      \end{tikzcd}
    };
  \end{tikzpicture}
\end{center}

\begin{center}
  \begin{tikzpicture}[baseline=(a).base]
    \node[scale=.9] (a) at (0,0){
      \begin{tikzcd}[column sep=3em, row sep=2em, text height=1ex, text depth=0ex, ampersand replacement=\&, every
        label/.style={font=\small, auto}]
        \mathring{H}_{\varepsilon^{-1}}^1 (\Omega) \arrow[r, -stealth, shift left, "\grad"] \arrow[d, -stealth, "\Pi_h^0"] \& \arrow[l, -stealth, shift left, "-\divgn"] \mathring{H}_{\varepsilon} (\curl, \Omega) \arrow[r, -stealth, shift left, "\curl"]  \arrow[d, -stealth, "\Pi_h^1"] \& \arrow[l, -stealth, shift left, "\curl"] \mathring{H}_{\mu} (\divgn, \Omega) \arrow[r, -stealth, shift left, "-\divgn"]  \arrow[d, -stealth, "\Pi_h^2"] \& \arrow[l, -stealth, shift left, "\grad"] L^2(\Omega) \arrow[d, -stealth, "\Pi_h^3"] \\[2em]
       \arrow[u, hookrightarrow, shift left, yshift = .1em]  \mathring{H}_{\varepsilon^{-1}}^1(\Omega) \hspace{.5em} \arrow[r, -stealth, shift left, yshift = -.7em, "\grad"] \& \arrow[l, -stealth, shift left, yshift = -.7em, "-\divgn"] \arrow[u, hookrightarrow, shift left, yshift = .1em] \mathring{H}_{\varepsilon} (\curl, \Omega) \arrow[r, -stealth, yshift = -.7em, shift left, "\curl"]  \& \arrow[l, -stealth, yshift = -.7em, shift left, "\curl"] \arrow[u, hookrightarrow, shift left, yshift = .1em] \mathring{H}_{\mu} (\divgn, \Omega) \arrow[r, -stealth, yshift = -.7em, shift left, "-\divgn"]  \& \arrow[l, -stealth, yshift = -.7em, shift left, "\grad"]  \arrow[u, hookrightarrow, shift left, yshift = .1em] \hspace{.5em} L^2(\Omega) \\[-2em]
        \cap \, \mathcal{P}_r^-(\Omega) \hspace{.5em}\& \cap \, \mathcal{P}_r^-(\Omega) \& \cap \, \mathcal{P}_r^-(\Omega) \& \hspace{.5em} \cap \, \mathcal{P}_r^-(\Omega)
      \end{tikzcd}
    };
  \end{tikzpicture}
\end{center}

The singularly crucial aspect of FEEC that we use is the following theorem due to Arnold, Falk and Winther~\cite{ArFaWi2006} that characterizes the approximation properties of these finite dimensional subspaces under a so-called smoothed $L^2$ projection operation from their respective smooth function spaces. We will rely on this result and adapt it for all our full error analysis for the LF$_4$ and TS$_4$ schemes. In performing such analysis, we shall always be assuming sufficient regularity for the solution of the three-field Maxwell's system. For clarity and concreteness, we wish to expound on this a bit and that it means if we use degree $r \ge 1$ polynomial subspaces, the solutions are construed to be in $\mathring{H}^1_{\varepsilon^{-1}}(\Omega) \cap H^r(\Omega)$ for the scalar field, and $\mathring{H}_{\varepsilon}(\curl; \Omega) \cap [H^r(\Omega)]^d$ or $\mathring{H}_{\mu}(\divgn; \Omega) \cap [H^r(\Omega)]^d$ for the vector fields where $d = 2, 3$ as the case may be. Here, $[H^r(\Omega)]^d$ denotes the class of component-wise $H^r(\Omega)$ vector fields.

\begin{theorem}~\cite[Theorem 5.3]{ArFaWi2006}
Denote by $\Pi_h$ the canonical projection of $\Lambda^k(\Omega)$ onto either $\mathcal{P}_r \Lambda^k(\mathcal{T}_h)$ or $\mathcal{P}_{r+1}^- \Lambda^k(\mathcal{T}_h)$. Let $1 \leq p \leq \infty$ and $(n-k)/p < s \leq r+1$. Then $\Pi_h$ extends boundedly to $W_p^s \Lambda^k(\Omega)$, and there exists a constant $c$ independent of $h$, such that 
\[
\norm{\omega - \Pi_h \omega}_{L^p\Lambda^k(\Omega)} \leq C h^s |\omega|_{W_p^s \Lambda^k(\Omega)}, \quad \omega \in W_p^s \Lambda^k(\Omega).
\]
\label{thm:arfawismoothedprojection}
\end{theorem}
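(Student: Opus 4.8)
The plan is to reduce the global estimate to a local one on each simplex $T$ of the triangulation $\mathcal{T}_h$, prove that local estimate by an affine-scaling argument to a fixed reference simplex $\hat T$, and on $\hat T$ combine the Bramble--Hilbert lemma with the polynomial-reproduction property of the canonical projection. First I would recall that $\Pi_h$ is defined element-by-element through the degrees of freedom of $\mathcal{P}_r\Lambda^k$ or $\mathcal{P}_{r+1}^-\Lambda^k$, which are integral moments of traces of the $k$-form $\omega$ against test forms on the subsimplices of each $T$. The first, and I expect most delicate, step is to verify that these functionals are well defined and bounded on $W_p^s\Lambda^k(T)$. A degree of freedom attached to a face of dimension $d \ge k$ involves the trace of the $k$-form onto that face, of codimension $n-d$; the worst case $d=k$ has codimension $n-k$. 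By the Sobolev trace theorem such a trace is a bounded operation out of $W_p^s$ exactly when $s > (n-k)/p$, which is precisely the stated hypothesis. This is where the regularity threshold enters and is the crux of the assertion that ``$\Pi_h$ extends boundedly to $W_p^s\Lambda^k(\Omega)$.''

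With boundedness in hand I would pass to the reference element. Let $F_T : \hat T \to T$ be the affine diffeomorphism and $F_T^*$ its pullback on forms; naturality of the degrees of freedom gives the commutation $F_T^*(\Pi_h\omega) = \hat\Pi(F_T^*\omega)$, where $\hat\Pi$ is the canonical projection on $\hat T$. Since the target space is finite dimensional, contains all polynomial $k$-forms of degree $\le r$, and $\hat\Pi$ is the identity on it, the operator $I-\hat\Pi$ annihilates every polynomial $k$-form of degree $\le r$; by $s \le r+1$ this includes all polynomials of degree $< s$, the precise requirement for the Bramble--Hilbert lemma. Applying that lemma to the bounded operator $I-\hat\Pi : W_p^s\Lambda^k(\hat T) \to L^p\Lambda^k(\hat T)$ yields $\norm{\hat\omega - \hat\Pi\hat\omega}_{L^p\Lambda^k(\hat T)} \le C\,\lvert\hat\omega\rvert_{W_p^s\Lambda^k(\hat T)}$ with $C$ depending only on $\hat T$, $r$, $k$, $p$, $s$.

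Finally I would undo the scaling. Tracking the Jacobian factors of $F_T$ through the pullback on $k$-forms and through $s$ weak derivatives, the form-degree and determinant factors cancel between the two sides and convert the reference estimate into $\norm{\omega - \Pi_h\omega}_{L^p\Lambda^k(T)} \le C\,h_T^{s}\,\lvert\omega\rvert_{W_p^s\Lambda^k(T)}$, where $h_T = \operatorname{diam} T$ and shape regularity of $\mathcal{T}_h$ keeps $C$ uniform over elements. Raising to the $p$-th power and summing over $T \in \mathcal{T}_h$ (or taking the maximum when $p=\infty$), then bounding each $h_T$ by $h$, delivers the asserted global bound.

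The only genuinely non-routine ingredient is the trace and Sobolev-embedding argument that pins the admissible range of the smoothness index to $(n-k)/p < s \le r+1$: the lower bound comes from well-definedness of the trace-based degrees of freedom, and the upper bound from the degree $r$ of polynomials reproduced, which is exactly what the Bramble--Hilbert step consumes. The remaining affine scaling and summation are standard finite-element bookkeeping. Since this is \cite[Theorem 5.3]{ArFaWi2006}, I would in practice simply cite it, but the sketch above indicates the route one would follow to reconstruct it.
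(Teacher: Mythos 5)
Your sketch is correct and matches the situation in the paper: the paper offers no proof of this statement---it is imported verbatim from \cite[Theorem 5.3]{ArFaWi2006}---and the proof in that source is exactly your chain of trace-based well-definedness of the degrees of freedom, affine naturality of the canonical projection, polynomial reproduction up to degree $r$ (valid for both families, since $\mathcal{P}_r\Lambda^k \subseteq \mathcal{P}_{r+1}^-\Lambda^k$) feeding the Bramble--Hilbert lemma on the reference simplex, and shape-regular scaling followed by summation over elements. Your identification of $s > (n-k)/p$ as the trace threshold for the codimension-$(n-k)$ degrees of freedom and $s \le r+1$ as the polynomial-reproduction budget is precisely the content of the cited theorem, so citing it, as you note and as the paper does, is the appropriate course.
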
 

Another recurring result in our various analysis of stability and error convergence for the time discretizations is Gronwall's inequality. We state below a discrete version of it that is most pertinent to how we wield Gronwall's inequality. 

\begin{lemma}[Discrete Gronwall, {\cite[Lemma 2]{AnAn2019}}] \label{lemma:gronwall_dscrt}
  Let $\delta \ge 0$, $g_0 \ge 0$ and $(a_n)$, $(b_n)$, $(c_n)$ and $(\gamma_n)$ be sequences of nonnegative numbers such that:
  \[
    a_N + \delta \sum\limits_{n = 0}^N b_n \le \delta \sum\limits_{n = 0}^{N} \left( \gamma_n a_n + c_n \right) + g_0 \text{ for all } N = 0, 1, \dots.
  \]
  Assuming that $\gamma_n \delta < 1$ for all $n$ and setting $\sigma_n \coloneq \left( 1 - \gamma_n \delta \right)^{-1}$, for all $N \ge 0$ we have that:
  \[
    a_N + \delta \sum\limits_{n = 0}^N b_n \le \left( \delta \sum\limits_{n = 0}^N c_n + g_0 \right) \exp \left( \delta \sum\limits_{n = 0}^N \sigma_n \gamma_n \right).
  \]
\end{lemma}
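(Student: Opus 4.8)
\emph{Proof proposal.} The statement is a standard discrete Gronwall estimate, and the plan is to prove it by isolating the ``diagonal'' term $\gamma_N a_N$, absorbing it into the left-hand side using the hypothesis $\delta \gamma_N < 1$, and then running an induction on $N$ whose product-form bound is finally relaxed to the exponential through an elementary scalar inequality. To set up, I would write $R_N \coloneq \delta \sum_{n=0}^N (\gamma_n a_n + c_n) + g_0$ for the right-hand side of the hypothesis and $C_N \coloneq \delta \sum_{n=0}^N c_n + g_0$ for the part not involving the unknowns, so that the assumption reads $a_N + \delta \sum_{n=0}^N b_n \le R_N$. In particular, since every $b_n \ge 0$, this yields the pointwise bound $a_N \le R_N$, which is the only consequence of the hypothesis I will use after this point.

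First I would establish a one-step recursion for $R_N$. Writing $R_N - R_{N-1} = \delta \gamma_N a_N + \delta c_N$ and inserting $a_N \le R_N$ gives $R_N - R_{N-1} \le \delta \gamma_N R_N + \delta c_N$, that is, $(1 - \delta \gamma_N) R_N \le R_{N-1} + \delta c_N$. This is the step where the assumption $\delta \gamma_N < 1$ is essential: it guarantees $1 - \delta \gamma_N > 0$, so dividing through and recalling $\sigma_N = (1 - \delta \gamma_N)^{-1}$ produces the clean recursion $R_N \le \sigma_N (R_{N-1} + \delta c_N)$, with the base case $R_0 \le \sigma_0 C_0$ obtained the same way.

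Next I would prove by induction on $N$ that $R_N \le C_N \prod_{n=0}^N \sigma_n$. The inductive step follows by substituting the recursion $R_N \le \sigma_N(R_{N-1} + \delta c_N)$ and the induction hypothesis, and then using both $C_N = C_{N-1} + \delta c_N$ and the fact that each $\sigma_n \ge 1$, whence $\prod_{n=0}^{N-1} \sigma_n \ge 1$, to absorb the $\delta c_N$ contribution into the product. To convert this product into the exponential appearing in the statement, I would invoke the elementary inequality $\sigma_n \le \exp(\delta \gamma_n \sigma_n)$, valid for $0 \le \delta \gamma_n < 1$; writing $x = \delta \gamma_n$ this amounts to $-\ln(1-x) \le x/(1-x)$, which follows at once by bounding the integrand in $-\ln(1-x) = \int_0^x (1-t)^{-1}\, dt$ by its value at $t = x$. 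Taking the product over $n = 0, \dots, N$ then gives $\prod_{n=0}^N \sigma_n \le \exp(\delta \sum_{n=0}^N \sigma_n \gamma_n)$, and combining this with the induction bound and with $a_N + \delta \sum_{n=0}^N b_n \le R_N$ yields exactly the claimed estimate.

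The only genuinely delicate point is the first step: correctly handling the self-referential term $\gamma_N a_N$, which appears on both sides, by the absorption that the condition $\delta \gamma_N < 1$ makes possible and that is precisely what manufactures the factor $\sigma_N$. Everything afterward is a routine induction together with the scalar exponential bound, so I expect no further obstacles.
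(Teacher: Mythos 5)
Your proposal is correct, but note that the paper itself offers no proof of this lemma: it is stated verbatim with a citation to \cite[Lemma 2]{AnAn2019} and used as a black box in the stability and error arguments, so there is no in-paper argument to compare against. Your blind proof is the standard one and it is complete: dropping the nonnegative $b$-sum to get $a_N \le R_N$, the absorption step $(1-\delta\gamma_N)R_N \le R_{N-1} + \delta c_N$ (where $\delta\gamma_N < 1$ is exactly what licenses the division and manufactures $\sigma_N$), the induction $R_N \le C_N \prod_{n=0}^N \sigma_n$, and the scalar bound $\sigma_n \le \exp(\delta\gamma_n\sigma_n)$, equivalently $-\ln(1-x) \le x/(1-x)$ for $0 \le x < 1$, which you justify correctly by bounding the integrand of $\int_0^x (1-t)^{-1}\,dt$ by its value at $t=x$. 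Two small points you handled that are easy to fumble: the inductive step genuinely needs $\prod_{n=0}^{N-1}\sigma_n \ge 1$ (true since each $\sigma_n \ge 1$) to absorb the $\delta c_N$ term into the product, and the hypothesis must be invoked at every index $N$, not just the final one, since $a_N \le R_N$ is used inside the recursion at each step; you did both explicitly. This also explains why the conclusion carries the slightly unusual weight $\sigma_n\gamma_n$ in the exponential rather than the bare $\gamma_n$ of the continuous Gronwall inequality, and why no smallness of $\delta\sum\gamma_n$ is needed beyond the pointwise condition $\gamma_n\delta < 1$.
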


In addition to these, we shall variously use other standard inequalities such as Cauchy-Schwarz, arithmetic mean-geometric mean (AM-GM) and polarization inequalities.

\subsection{Two Strategies for Higher-Order Time Derivatives}
\label{subsec:two_strategies}
We explain our strategies for obtaining a higher-order time discretization by using the following model problem. Let $u: \Omega \times [0, T] \longrightarrow \R$ belonging to an appropriate function space (for example, $u \in \mathring{H}_{\varepsilon}(\curl; \Omega; L^2(0, T)) \cap C^\infty(\Omega, (0,T))$) be the solution of the following partial differential equation (PDE):
\begin{equation}
\dfrac{\partial u}{\partial t} = \nabla \times u \quad \text{on } \Omega \times [0,T].
\label{eqn:modelPDE}
\end{equation}
We assume a uniform discretization of $[0, T]$ as specified earlier too and let $t^n \coloneq n \Delta t$, $n = 0, 1, \dots, N$ where $N \Delta t \approx T$ for some suitable choice of a fixed time step parameter $\Delta t > 0$. We now Taylor expand the solution around $t^n$ to obtain:
\[
\dfrac{u^{n+1} - u^{n-1}}{2 \Delta t} = \dfrac{\partial u}{\partial t}(t^n) + \dfrac{\Delta t^2}{6} \dfrac{\partial^3 u}{\partial t^3}(t^n) + \mathcal{O}(\Delta t^4),
\]
where $u^n \coloneq u(x, t^n)$ for $x \in \Omega$ and $\mathcal{O}$ is the standard Bachmann–Landau notation. Then, our two strategies can be explained as follows.

\begin{itemize}
  \item[] \textbf{Spatial Strategy:} In the above Taylor series, we transform the higher-order time derivatives into higher-order spatial derivatives using the PDE itself. Thus, in our example, $\dfrac{\partial^3 u}{\partial t^3} = \nabla \times \nabla \times \nabla \times u$, and so we now set up our spatially derived scheme to be a discretization of the following resultant PDE:
  \[
 \dfrac{u^{n+1} - u^{n-1}}{2 \Delta t} = \nabla \times u^n  + \dfrac{\Delta t^2}{6} \nabla \times \nabla \times \nabla \times u^n,
  \]
  and we use carefully use FEEC principles for discretizing the spatial solution. \\
  
  \item[] \textbf{Temporal Strategy:} In the temporal strategy, we first replace a single time derivative in all of the higher-order partial time derivatives using the PDE, and then discretize all remaining time derivatives using suitably higher-order finite difference schemes. The finite difference schemes for each of the time derivatives are so chosen as to not worsen the fourth-order accuracy in our time discretization. Thus, in our example, we replace one of the third-order time partial derivatives using Equation~\labelcref{eqn:modelPDE}, and the resulting second-order time partial derivative is discretized by a second-order accurate finite difference approximation. In general, we will have more time derivatives and each of them will potentially require a differen finite difference scheme. Thus, we obtain $\dfrac{\partial^3 u}{\partial t^3} = \nabla \times \dfrac{\partial^2 u}{\partial t^2} \approx \nabla \times \left(\dfrac{u^{n+1} - 2 u^n + u^{n-1}}{\Delta t^2} \right)$, and set up our discrete scheme to be the following:
\[
 \dfrac{u^{n+1} - u^{n-1}}{2 \Delta t} = \nabla \times u^n  + \dfrac{\Delta t^2}{6} \nabla \times \left(\dfrac{u^{n+1} - 2 u^n + u^{n-1}}{\Delta t^2} \right).
\]
\end{itemize}
In both our above strategies, the spatial component of the solution is discretized using a compatible and a suitably higher-order discretization using the de Rham finite element spaces as in FEEC. Applying these principles to our Maxwell's system in Equation~\labelcref{eqn:maxwells_eqns} leads us, respectively, to Equations~\labelcref{eqn:maxwell_p_lf4,eqn:maxwell_E_lf4,eqn:maxwell_H_lf4} and Equations~\labelcref{eqn:maxwell_p_ts4,eqn:maxwell_E_ts4,eqn:maxwell_H_ts4} as our proposed fourth-order time discretization schemes.

\section{Characterization of LF$_4$ Scheme} \label{sec:implicit_lf4}

\subsection{Time Discretization Stability}

For our implicit LF$_4$ time discretization scheme as in Equations~\labelcref{eqn:maxwell_p_lf4,eqn:maxwell_E_lf4,eqn:maxwell_H_lf4} with bootstrapping using Equations~\labelcref{eqn:maxwell_p0_lf4,eqn:maxwell_E0_lf4,eqn:maxwell_H0_lf4}, we now show that the semidiscretization in time yields a stable method. We also provide the error analysis to show its fourth-order convergence in time in terms of a sufficiently small time step $\Delta t > 0$. For the LF$_4$ scheme, we define the discrete energy at time $t^n$ to be the following:
\begin{equation}
\mathcal{E}^n \coloneq \norm{p^{N - \frac{1}{2}}}_{\varepsilon^{-1}}^2 + \norm{E^{N - \frac{1}{2}}}_{\varepsilon}^2 + \norm{H^N}_{\mu}^2.
\end{equation}

\begin{theorem}[Discrete Energy Conservation] \label{thm:dscrt_enrgy_estmt_lf4}
  For the semidiscretization using the LF$_4$ scheme as given in Equations~\labelcref{eqn:maxwell_p_lf4,eqn:maxwell_E_lf4,eqn:maxwell_H_lf4,eqn:maxwell_p0_lf4,eqn:maxwell_E0_lf4,eqn:maxwell_H0_lf4}, for any fixed time step $\Delta t > 0$ sufficiently small, we have that:
\[
  \mathcal{E}^n = \norm{p^{N - \frac{1}{2}}}^2_{\varepsilon^{-1}} + \norm{E^{N - \frac{1}{2}}}^2_{\varepsilon} + \norm{H^N}^2_{\mu} = \norm{p_0}^2_{\varepsilon^{-1}} + \norm{E_0}^2_{\varepsilon} + \norm{H_0}^2_{\mu} \eqcolon \mathcal{E}^0.
\]
\end{theorem}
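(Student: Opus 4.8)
The plan is to reproduce at the discrete level the skew-adjoint structure of the spatial coupling that makes the continuous energy $\norm{p}_{\varepsilon^{-1}}^2 + \norm{E}_{\varepsilon}^2 + \norm{H}_{\mu}^2$ conserved, together with the observation that the fourth-order $\tfrac{\Delta t^2}{12}$ corrections have been inserted in mutually adjoint positions across the coupled equations. Concretely, I would test Equation~\eqref{eqn:maxwell_p_lf4} with $\widetilde{p} = \varepsilon^{-1}(p^{n+\frac12} + p^{n-\frac12})$, Equation~\eqref{eqn:maxwell_E_lf4} with $\widetilde{E} = E^{n+\frac12} + E^{n-\frac12}$, and Equation~\eqref{eqn:maxwell_H_lf4} with $\widetilde{H} = H^{n+1} + H^n$. (Here I assume, as is standard for this formulation, that $\varepsilon$ and $\mu$ are constant, so that $\varepsilon^{-1}$ commutes with $\nabla$, the first test function is admissible, and $\aInnerproduct{\varepsilon a}{\varepsilon^{-1}b} = \aInnerproduct{a}{b}$; for spatially varying coefficients an extra term involving $\nabla\varepsilon^{-1}$ would survive and the exact identity would be lost.) With these choices the polarization identity and the symmetry of the weight collapse each time-difference term into a difference of squared weighted norms all carrying the common factor $1/\Delta t$, for instance $\aInnerproduct{(p^{n+\frac12}-p^{n-\frac12})/\Delta t}{\varepsilon^{-1}(p^{n+\frac12}+p^{n-\frac12})} = \tfrac1{\Delta t}\bigl(\norm{p^{n+\frac12}}_{\varepsilon^{-1}}^2 - \norm{p^{n-\frac12}}_{\varepsilon^{-1}}^2\bigr)$, and analogously for $E$ and $H$.

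Summing the three tested equations, I would then show that every remaining term cancels in pairs. At leading order the $p$--$E$ coupling gives $-\tfrac12\aInnerproduct{E^{n+\frac12}+E^{n-\frac12}}{\nabla(p^{n+\frac12}+p^{n-\frac12})}$ from Equation~\eqref{eqn:maxwell_p_lf4} (after using $\aInnerproduct{\varepsilon a}{\varepsilon^{-1}b}=\aInnerproduct{a}{b}$) against its exact partner $+\tfrac12\aInnerproduct{\nabla(p^{n+\frac12}+p^{n-\frac12})}{E^{n+\frac12}+E^{n-\frac12}}$ from Equation~\eqref{eqn:maxwell_E_lf4}, and the $E$--$H$ coupling cancels identically in the same way. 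The feature specific to LF$_4$ is that the $\tfrac{\Delta t^2}{12}$ corrections cancel as well: the $\nabla\nabla\cdot$ correction in Equation~\eqref{eqn:maxwell_p_lf4} is the symmetric partner of the one in Equation~\eqref{eqn:maxwell_E_lf4}, while the $\nabla\times\nabla\times$ corrections in Equations~\eqref{eqn:maxwell_E_lf4} and~\eqref{eqn:maxwell_H_lf4} are partners once the constant factor $\varepsilon^{-1}\mu^{-1}$ is pulled out and the symmetry $\aInnerproduct{a}{b} = \aInnerproduct{b}{a}$ of the real $L^2$ inner product is invoked. What remains is exactly $\norm{p^{n+\frac12}}_{\varepsilon^{-1}}^2 + \norm{E^{n+\frac12}}_{\varepsilon}^2 + \norm{H^{n+1}}_{\mu}^2 = \norm{p^{n-\frac12}}_{\varepsilon^{-1}}^2 + \norm{E^{n-\frac12}}_{\varepsilon}^2 + \norm{H^{n}}_{\mu}^2$ for every $n \ge 1$, so that telescoping reduces the claim to the single initialization identity at the first step.

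The remaining and most delicate step is to verify that the bootstrapping Equations~\eqref{eqn:maxwell_p0_lf4}--\eqref{eqn:maxwell_H0_lf4} also conserve energy, i.e.\ $\norm{p^{\frac12}}_{\varepsilon^{-1}}^2 + \norm{E^{\frac12}}_{\varepsilon}^2 + \norm{H^{1}}_{\mu}^2 = \norm{p_0}_{\varepsilon^{-1}}^2 + \norm{E_0}_{\varepsilon}^2 + \norm{H_0}_{\mu}^2$. Here the bookkeeping is heavier, because the $p$- and $E$-updates use the half step $\Delta t/2$ while the $H$-update uses the full step $\Delta t$, and the coupling and correction terms carry the extra factors $\tfrac12$, $\tfrac18$, $\tfrac14$. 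I would test with $\widetilde{p} = \varepsilon^{-1}(p^{\frac12}+p_0)$, $\widetilde{E} = E^{\frac12}+E_0$, $\widetilde{H} = H^1+H_0$ as before, but first rescale the $p$- and $E$-equations by $\tfrac12$ so that all three time-difference terms share the common prefactor $1/\Delta t$; a short factor-chase then shows that with this relative weighting the leading-order couplings and the $\tfrac1{16}\cdot\tfrac{\Delta t^2}{12}$ correction couplings again annihilate in pairs by the same symmetric-partner structure. I expect this initialization step to be the principal obstacle, since the cancellation is invisible until the correct relative scaling of the three bootstrap equations is identified. Finally, I would note that the argument uses only the existence of the discrete solution, which is where the hypothesis that $\Delta t$ be sufficiently small enters (to guarantee invertibility of the implicit system with its $\Delta t^2$-weighted higher-derivative terms); the conservation identity itself is exact and holds for every admissible $\Delta t$.
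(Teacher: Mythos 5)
Your proposal is correct and takes essentially the same route as the paper's proof: the paper tests with $\widetilde{p} = 2\Delta t\,\varepsilon^{-1}\bigl(p^{n+\frac12}+p^{n-\frac12}\bigr)$, $\widetilde{E} = 2\Delta t\bigl(E^{n+\frac12}+E^{n-\frac12}\bigr)$, $\widetilde{H} = 2\Delta t\bigl(H^{n+1}+H^{n}\bigr)$ (your choices up to the immaterial factor $2\Delta t$), obtains the same pairwise cancellation of the leading couplings and of the $\tfrac{\Delta t^2}{12}$ corrections, telescopes over $n$, and handles the bootstrap by weighting the $H$-equation twice as heavily via $\widetilde{H} = 4\Delta t\bigl(H^{1}+H^{0}\bigr)$, which is exactly equivalent to your rescaling of the $p$- and $E$-equations by $\tfrac12$. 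One cosmetic slip only: after that rescaling the $E$--$H$ correction pair carries the factor $\tfrac18\cdot\tfrac{\Delta t^2}{12}$ rather than $\tfrac1{16}\cdot\tfrac{\Delta t^2}{12}$ (the latter applies to the $p$--$E$ pair), but both pairs still annihilate, so the argument stands; your explicit caveat that constant $\varepsilon,\mu$ are needed to commute $\varepsilon^{-1}$ past $\nabla$ is a point the paper uses tacitly.
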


\begin{proof}
  Since Equations~\labelcref{eqn:maxwell_p_lf4,eqn:maxwell_E_lf4,eqn:maxwell_H_lf4} are true for all $\widetilde{p}\in \mathring{H}^1_{\varepsilon^{-1}}(\Omega)$, $\widetilde{E} \in \mathring{H}_{\varepsilon}(\curl; \Omega)$, $\widetilde{H} \in \mathring{H}_{\mu}(\divgn; \Omega)$, using $\widetilde{p} = 2 \Delta t \varepsilon^{-1} \left( p^{n + \frac{1}{2}} + p^{n - \frac{1}{2}} \right)$, $\widetilde{E} = 2 \Delta t \left(E^{n + \frac{1}{2}} + E^{n - \frac{1}{2}} \right)$ and $\widetilde{H} = 2 \Delta t \left(H^{n + 1} + H^n \right)$ in them, we obtain the following:
  \begin{multline*}
    2 \aInnerproduct{p^{n+\frac{1}{2}} - p^{n - \frac{1}{2}}}{\varepsilon^{-1} \left( p^{n + \frac{1}{2}} + p^{n-\frac{1}{2}} \right)} - \Delta t \aInnerproduct{E^{n + \frac{1}{2}} + E^{n - \frac{1}{2}}}{\nabla \left( p^{n + \frac{1}{2}} + p^{n - \frac{1}{2}} \right)} \\ + \dfrac{\Delta t^3}{12} \aInnerproduct{\nabla \nabla \cdot \left(E^{n + \frac{1}{2}} + E^{n - \frac{1}{2}}\right)}{\nabla \left( p^{n + \frac{1}{2}} + p^{n - \frac{1}{2}} \right)} =0,
\end{multline*}
\vspace{-1em} \begin{multline*}
  \Delta t \aInnerproduct{\nabla \left( p^{n + \frac{1}{2}} + p^{n - \frac{1}{2}} \right)}{E^{n + \frac{1}{2}} + E^{n - \frac{1}{2}}} - \dfrac{\Delta t^3}{12} \aInnerproduct{\nabla \left( p^{n + \frac{1}{2}} + p^{n - \frac{1}{2}} \right)}{\nabla \nabla \cdot \left(E^{n + \frac{1}{2}} + E^{n - \frac{1}{2}}\right)} \\ + 2 \aInnerproduct{\varepsilon \left(E^{n + \frac{1}{2}} - E^{n - \frac{1}{2}} \right)}{E^{n + \frac{1}{2}} + E^{n - \frac{1}{2}}} \, -
  \Delta t \aInnerproduct{\left( H^{n + 1} + H^n \right)}{\nabla \times \left( E^{n + \frac{1}{2}} + E^{n - \frac{1}{2}} \right)} \\ -  \dfrac{\Delta t^3}{12} \aInnerproduct{\mu^{-1}\varepsilon^{-1} \nabla \times \nabla \times \left( H^{n + 1} + H^n \right)}{\nabla \times \left( E^{n + \frac{1}{2}} + E^{n - \frac{1}{2}} \right)} = 0,
\end{multline*}
\vspace{-1em} \begin{multline*}
  2 \aInnerproduct{\mu \left(H^{n + 1} - H^{n} \right)}{H^{n + 1} + H^n} + \Delta t \aInnerproduct{\nabla \times \left( E^{n + \frac{1}{2}} + E^{n - \frac{1}{2}} \right)}{H^{n + 1} + H^n} \\ + \dfrac{\Delta t^3}{12}  \aInnerproduct{\varepsilon^{-1}\mu^{-1} \nabla \times \left( E^{n + \frac{1}{2}} + E^{n - \frac{1}{2}} \right)}{\nabla \times \nabla \times \left(H^{n + 1} + H^n \right)} = 0.
\end{multline*}
Adding these equations together and using properties of the inner product, we get that:
\begin{multline*}
  2 \aInnerproduct{\varepsilon^{-1} \left( p^{n + \frac{1}{2}} - p^{n - \frac{1}{2}} \right)}{p^{n + \frac{1}{2}} + p^{n - \frac{1}{2}}} + 2 \aInnerproduct{\varepsilon \left(E^{n + \frac{1}{2}} - E^{n - \frac{1}{2}} \right)}{E^{n + \frac{1}{2}} + E^{n - \frac{1}{2}}} \\ + 2 \aInnerproduct{\mu \left( H^{n + 1} - H^{n} \right)}{H^{n + 1} + H^n} = 0.
\end{multline*}
Summing over $n = 1$ to $N - 1$ leads us to:
\[
  \norm{p^{N - \frac{1}{2}}}^2_{\varepsilon^{-1}} - \norm{p^\frac{1}{2}}^2_{\varepsilon^{-1}} + \norm{E^{N - \frac{1}{2}}}^2_{\varepsilon} - \norm{E^\frac{1}{2}}^2_{\varepsilon} + \norm{H^N}^2_{\mu} - \norm{H^1}^2_{\mu} = 0.
\]
Now, $p^{\frac{1}{2}}$, $E^{\frac{1}{2}}$ and $H^1$ satisfy Equations~\labelcref{eqn:maxwell_p0_lf4,eqn:maxwell_E0_lf4,eqn:maxwell_H0_lf4}, and so using $\widetilde{p} = 2 \Delta t \varepsilon^{-1} \left( p^{\frac{1}{2}} + p^0 \right)$, $\widetilde{E} = 2 \Delta t \left(E^{\frac{1}{2}} + E^0 \right)$ and $\widetilde{H} = 4 \Delta t \left(H^{1} + H^0 \right)$ and repeating the previous arguments leads us to the following estimate:
\[
  \norm{p^{\frac{1}{2}}}^2_{\varepsilon^{-1}}  - \norm{p^0}^2_{\varepsilon^{-1}} + \norm{E^{\frac{1}{2}}}^2_{\varepsilon} - \norm{E^0}^2_{\varepsilon} + \norm{H^{1}}^2_{\mu} - \norm{H^0}^2_{\mu} = 0.
\]
Adding these previous two equations, we get that:
\[
  \norm{p^{N - \frac{1}{2}}}^2_{\varepsilon^{-1}} + \norm{E^{N - \frac{1}{2}}}^2_{\varepsilon} + \norm{H^N}^2_{\mu} = \norm{p^0}^2_{\varepsilon^{-1}} + \norm{E^0}^2_{\varepsilon} + \norm{H^0}^2_{\mu}. \qedhere
\]
\end{proof}

\begin{theorem}[Discrete Error Estimate]\label{thm:dscrt_error_estmt_lf4}
For the semidiscretization using the LF$_4$ scheme as in Equations~\labelcref{eqn:maxwell_p_lf4,eqn:maxwell_E_lf4,eqn:maxwell_H_lf4}, and ~\labelcref{eqn:maxwell_p0_lf4,eqn:maxwell_E0_lf4,eqn:maxwell_H0_lf4}, for the solution $(p, E, H)$ of the variationally posed Maxwell's system as in Equations~\labelcref{eqn:maxwell_p_wf,eqn:maxwell_E_wf,eqn:maxwell_H_wf} with initial conditions as in Equation~\eqref{eqn:ICs}, assuming sufficient regularity with $p \in C^5(0, T; \mathring{H}^1_{\varepsilon^{-1}}(\Omega))$, $E \in C^5(0, T; \mathring{H}_{\varepsilon}(\curl; \Omega))$, and $H \in C^5(0, T; \mathring{H}_{\mu}(\divgn; \Omega))$, and for a time step $\Delta t > 0$ sufficiently small, there exists a positive bounded constant $C$ independent of $\Delta t$ such that:
\[
  \norm{e_p^{N - \frac{1}{2}}}_{\varepsilon^{-1}} + \norm{e_E^{N - \frac{1}{2}}}_{\varepsilon} + \norm{e_H^N}_{\mu} \le C \left[ \left(\Delta t\right)^4 + \norm{e_p^0}_{\varepsilon^{-1}} + \norm{e_E^0}_{\varepsilon} + \norm{e_H^0}_{\mu} \right],
\]
where $e_p^{n + \frac{1}{2}} \coloneq p(t^{n + \frac{1}{2}}) - p^{n+\frac{1}{2}}$, $e_E^{n + \frac{1}{2}} \coloneq E(t^{n + \frac{1}{2}}) - E^{n + \frac{1}{2}}$ and $e_H^n \coloneq H(t^n) - H^n$ are the errors in the time semidiscretization of $p$, $E$ and $H$, respectively.
\end{theorem}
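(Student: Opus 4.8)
The plan is to mirror the structure of the energy-conservation argument in Theorem~\ref{thm:dscrt_enrgy_estmt_lf4}, but now applied to the error quantities $e_p, e_E, e_H$ while carrying along the consistency residuals generated by the exact solution. First I would substitute the exact solution $(p(t), E(t), H(t))$ into the LF$_4$ equations~\labelcref{eqn:maxwell_p_lf4,eqn:maxwell_E_lf4,eqn:maxwell_H_lf4}. Since the exact solution does not satisfy the discrete scheme identically, this substitution produces residual functionals $R_p^n[\widetilde{p}]$, $R_E^n[\widetilde{E}]$, $R_H^n[\widetilde{H}]$ on the right-hand sides. Subtracting the (identically zero) scheme equations then yields the error equations, which possess exactly the same bilinear structure in $(e_p, e_E, e_H)$ that the scheme has in $(p, E, H)$, now forced by these residuals.

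The decisive step is the consistency analysis showing $R_p^n, R_E^n, R_H^n = \mathcal{O}(\Delta t^4)$. Here I would Taylor expand $p(t^{n\pm 1/2})$, $E(t^{n\pm 1/2})$ and $H(t^{n+1}), H(t^n)$ about the appropriate staggered time level, using the assumed $C^5$-in-time regularity. The $\mathcal{O}(1)$ part of each residual vanishes because the exact solution satisfies the continuous weak equations~\labelcref{eqn:maxwell_p_wf,eqn:maxwell_E_wf,eqn:maxwell_H_wf}. The crux is that the $\mathcal{O}(\Delta t^2)$ part must \emph{also} vanish: this is precisely where the spatial-strategy design pays off, since differentiating Maxwell's equations in time converts the higher time derivatives $\partial_t^3 p$, $\partial_t^2 E$, $\partial_t^2 H$ into the spatial operators $\nabla \nabla \cdot$ and $\nabla \times \nabla \times$ appearing in the $\tfrac{\Delta t^2}{12}$ correction terms, and the coefficients are tuned so that the leading $\mathcal{O}(\Delta t^2)$ contributions cancel identically (using also the orthogonality of curl-curl fields to gradient test fields and the homogeneous boundary conditions). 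I expect this cancellation to be the main obstacle and the technical heart of the proof. The analogous but slightly modified computation must then be carried out for the bootstrapping equations~\labelcref{eqn:maxwell_p0_lf4,eqn:maxwell_E0_lf4,eqn:maxwell_H0_lf4} to control the first half-step, yielding $\norm{e_p^{1/2}}_{\varepsilon^{-1}}^2 + \norm{e_E^{1/2}}_{\varepsilon}^2 + \norm{e_H^1}_{\mu}^2 \le C(\Delta t^8 + \norm{e_p^0}_{\varepsilon^{-1}}^2 + \norm{e_E^0}_{\varepsilon}^2 + \norm{e_H^0}_{\mu}^2)$.

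Next I would test the three error equations with the energy test functions $\widetilde{p} = 2\Delta t\, \varepsilon^{-1}(e_p^{n+1/2} + e_p^{n-1/2})$, $\widetilde{E} = 2\Delta t(e_E^{n+1/2} + e_E^{n-1/2})$ and $\widetilde{H} = 2\Delta t(e_H^{n+1} + e_H^n)$, add them, and invoke the same pairwise cancellation of the coupling and correction terms as in Theorem~\ref{thm:dscrt_enrgy_estmt_lf4}. By the polarization identity the surviving diagonal terms telescope into differences of the discrete energy norms of the errors, while the right-hand side collects the residual pairings. To bound these, I would integrate by parts in the terms of the form $\langle \rho, \nabla \widetilde{p}\rangle$, $\langle \rho, \nabla \times \widetilde{E}\rangle$ and $\langle \rho, \nabla \nabla \cdot \widetilde{E}\rangle$, moving the spatial derivatives off the error factors and onto the smooth $\mathcal{O}(\Delta t^4)$ residual fields $\rho$; the boundary contributions vanish because $e_p = 0$, $e_E \times n = 0$ and $e_H \cdot n = 0$, and the differentiated residual fields remain $\mathcal{O}(\Delta t^4)$ provided the exact solution carries enough spatial regularity. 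This reduces every residual contribution to an $L^2$-type pairing against the errors, which is exactly why the final estimate involves only the energy norms and no uncontrolled derivatives of the error.

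Finally, summing over $n = 1, \dots, N-1$, applying Cauchy-Schwarz and the AM-GM (Young) inequality to split each residual pairing as $\Delta t\, \norm{e}^2 + \Delta t\, \norm{\rho}^2$, and using $\sum_n \Delta t\, \norm{\rho}^2 = \mathcal{O}(\Delta t^8)$ together with the first-step bound for the half-step errors, I would arrive at an inequality of precisely the form required by the discrete Gronwall Lemma~\ref{lemma:gronwall_dscrt}, with $a_N$ the error energy at step $N$, $\gamma_n$ a fixed constant, $c_n = \mathcal{O}(\Delta t^8)$, and $g_0$ the first-step error energy. Gronwall then yields $\norm{e_p^{N-1/2}}_{\varepsilon^{-1}}^2 + \norm{e_E^{N-1/2}}_{\varepsilon}^2 + \norm{e_H^N}_{\mu}^2 \le C(\Delta t^8 + \norm{e_p^0}_{\varepsilon^{-1}}^2 + \norm{e_E^0}_{\varepsilon}^2 + \norm{e_H^0}_{\mu}^2)$, and taking square roots and using subadditivity of the square root gives the stated estimate. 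I anticipate that the energy-telescoping and Gronwall bookkeeping will be routine once the consistency cancellation is in hand; the genuine difficulties are establishing the $\mathcal{O}(\Delta t^4)$ consistency of both the interior and bootstrapping equations and performing the integration-by-parts reduction of the residuals to $L^2$ error norms.
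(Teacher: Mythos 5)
Your proposal follows essentially the same route as the paper's proof: Taylor expansion with integral remainders about the staggered time levels (with the time-differentiated Maxwell system turning the $\partial_t^3$ and $\partial_t^2$ terms into the scheme's $\tfrac{\Delta t^2}{12}$ spatial correction operators, so that the exact solution satisfies the LF$_4$ equations up to explicit fifth-, fourth- and second-derivative remainders), the same energy test functions and telescoping as in Theorem~\ref{thm:dscrt_enrgy_estmt_lf4}, a separate treatment of the bootstrapping half-step, Cauchy--Schwarz/Young splitting, remainder sums of size $\mathcal{O}(\Delta t^8)$ in squared norms, and the discrete Gronwall lemma. Your correctly anticipated caveat that the differentiated residual fields need extra spatial regularity of the time derivatives (e.g., $\partial_t^2(\nabla\times\nabla\times\nabla\times E)$ in $L^2$) is exactly what the paper absorbs into its ``sufficient regularity'' assumption.
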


\begin{proof}
Using the Taylor remainder theorem, and expressing $p(t)$ about $t = t^n$, we have that:
\[
  p(t) = p(t^n) + \dfrac{\partial p}{\partial t}(t^n)(t - t^n) + \dfrac{\partial^2 p}{\partial t^2}(t^n) \dfrac{(t - t^n)^2}{2!} + \dfrac{\partial^3 p}{\partial t^3}(t^n) \dfrac{(t - t^n)^3}{3!} + \dfrac{\partial^4 p}{\partial t^4}(t^n) \dfrac{(t - t^n)^4}{4!} + \int\limits_{t^n}^{t} \dfrac{(t - s)^4}{4!} \dfrac{\partial^5 p}{\partial t^5}(s) ds,
\]
which when evaluated at $t = t^{n + \frac{1}{2}}$ and $t = t^{n - \frac{1}{2}}$ yields:
\begin{multline*}
  p(t^{n + \frac{1}{2}}) = p(t^n) + \dfrac{\partial p}{\partial t}(t^n) (t^{n + \frac{1}{2}} - t^n) + \dfrac{\partial^2 p}{\partial t^2}(t^n) \dfrac{(t^{n + \frac{1}{2}} - t^n)^2}{2} + \dfrac{\partial^3 p}{\partial t^3}(t^n) \dfrac{(t^{n + \frac{1}{2}} - t^n)^3}{6} \\ + \dfrac{\partial^4 p}{\partial t^4}(t^n) \dfrac{(t^{n + \frac{1}{2}} - t^n)^4}{24} + \int\limits_{t^n}^{\mathclap{t^{n + \frac{1}{2}}}} \dfrac{(t^{n + \frac{1}{2}} - s)^4}{24} \dfrac{\partial^5 p}{\partial t^5}(s) ds, 
 \end{multline*}
\begin{multline*}
p(t^{n - \frac{1}{2}}) = p(t^n) + \dfrac{\partial p}{\partial t}(t^n)(t^{n - \frac{1}{2}} - t^n) + \dfrac{\partial^2 p}{\partial t^2}(t^n) \dfrac{(t^{n - \frac{1}{2}} - t^n)^2}{2} + \dfrac{\partial^3 p}{\partial t^3}(t^n) \dfrac{(t^{n - \frac{1}{2}} - t^n)^3}{6} \\ + \dfrac{\partial^4 p}{\partial t^4}(t^n) \dfrac{(t^{n - \frac{1}{2}} - t^n)^4}{24} + \int\limits_{t^n}^{\mathclap{t^{n - \frac{1}{2}}}} \dfrac{(t^{n - \frac{1}{2}} - s)^4}{24} \dfrac{\partial^5 p}{\partial t^5}(s) ds.
\end{multline*}
Subtracting these two equations, and using the result in the inner product term from the semidiscretization of the variational formulation leads to:
\[
  \ainnerproduct{\dfrac{p(t^{n + \frac{1}{2}}) - p(t^{n - \frac{1}{2}})}{\Delta t}}{\widetilde{p}} = \ainnerproduct{\dfrac{\partial p}{\partial t}(t^n)}{\widetilde{p}} + \dfrac{\Delta t^2}{24} \ainnerproduct{\dfrac{\partial^3 p}{\partial t^3}(t^n)}{\widetilde{p}} + \ainnerproduct{R^n_p}{\widetilde{p}},
\]
in which we have defined that:
\[
  R^n_p \coloneq \dfrac{1}{\Delta t} \left[\int\limits_{t^{n - \frac{1}{2}}}^{t^n} \dfrac{(t^{n - \frac{1}{2}} - s)^4}{24} \dfrac{\partial^5 p}{\partial t^5}(s) ds + \int\limits_{t^n}^{t^{n + \frac{1}{2}}} \dfrac{(t^{n + \frac{1}{2}} - s)^4}{24} \dfrac{\partial^5 p}{\partial t^5}(s) ds \right].
\]
Similarly, for $E$ and $H$, we have the following:
\begin{align*}
  \ainnerproduct{\varepsilon \dfrac{E(t^{n + \frac{1}{2}}) - E(t^{n - \frac{1}{2}})}{\Delta t}}{\widetilde{E}} &= \ainnerproduct{\varepsilon \dfrac{\partial E}{\partial t}(t^n)}{\widetilde{E}} + \dfrac{\Delta t^2}{24} \ainnerproduct{\varepsilon \dfrac{\partial^3 E}{\partial t^3}(t^n)}{\widetilde{E}} + \ainnerproduct{\varepsilon R^n_E}{\widetilde{E}}, \\
  \ainnerproduct{\mu \dfrac{H(t^{n + 1}) - H(t^n)}{\Delta t}}{\widetilde{H}} &= \ainnerproduct{\mu \dfrac{\partial H}{\partial t}(t^{n + \frac{1}{2}})}{\widetilde{H}} + \dfrac{\Delta t^2}{24} \ainnerproduct{\mu \dfrac{\partial^3 H}{\partial t^3}(t^{n + \frac{1}{2}})}{\widetilde{H}} + \ainnerproduct{\mu R^{n + \frac{1}{2}}_H}{\widetilde{H}},
\end{align*}
and in each of which we have defined that:
\begin{align*}
  R^n_E &\coloneq \dfrac{1}{\Delta t} \left[ \int\limits_{t^{n - \frac{1}{2}}}^{t^n} \dfrac{(t^{n - \frac{1}{2}} - s)^4}{24} \dfrac{\partial^5 E}{\partial t^5}(s) ds + \int\limits_{t^n}^{t^{n + \frac{1}{2}}} \dfrac{(t^{n + \frac{1}{2}} - s)^4}{24} \dfrac{\partial^5 E}{\partial t^5}(s) ds \right], \\
R^{n + \frac{1}{2}}_H &\coloneq \dfrac{1}{\Delta t} \left[ \int\limits_{t^n}^{t^{n + \frac{1}{2}}} \dfrac{(t^n - s)^4}{24} \dfrac{\partial^5 H}{\partial t^5}(s) ds + \int\limits_{t^{n + \frac{1}{2}}}^{t^{n + 1}} \dfrac{(t^{n + 1} - s)^4}{24} \dfrac{\partial^5 H}{\partial t^5}(s) ds\right].
\end{align*}
Using these terms in the weak formulation as in Equations~\labelcref{eqn:maxwell_p_wf,eqn:maxwell_E_wf,eqn:maxwell_H_wf}  at time $t = t^n$ for $p$ and $E$ terms, and at time $t = t^{n + \frac{1}{2}}$ for $H$ and using the Taylor remainder theorem again with:
\[
  u(t^n) \coloneq \dfrac{u(t^{n + \frac{1}{2}}) + u(t^{n - \frac{1}{2}})}{2} - r_{1,u}^n, \quad H(t^{n + \frac{1}{2}}) \coloneq \dfrac{H(t^{n+1}) + H(t^n)}{2} - r_{1,H}^{n + \frac{1}{2}},
\] 
\[
  u(t^n) \coloneq \dfrac{u(t^{n + \frac{1}{2}}) + u(t^{n - \frac{1}{2}})}{2} - \dfrac{\Delta t^2}{8}  \dfrac{\partial^2 u}{\partial t^2}(t^n) - r_{2,u}^n, \quad H(t^{n + \frac{1}{2}}) \coloneq \dfrac{H(t^{n+1}) + H(t^n)}{2} - \dfrac{\Delta t^2}{8}  \dfrac{\partial^2 H}{\partial t^2}(t^{n + \frac{1}{2}}) - r_{2,H}^{n + \frac{1}{2}},
\] 
where $u$ is either of $p$ or $E$, and in which we have defined that:
\[
r_{1,u}^n \coloneqq \dfrac{1}{2} \left[\int\limits_{\mathclap{t^n}}^{t^{n + \frac{1}{2}}} (t^{n + \frac{1}{2}} - s) \dfrac{\partial^2 u}{\partial t^2}(s) ds - \int\limits_{t^{n - \frac{1}{2}}}^{t^n} (t^{n - \frac{1}{2}} - s) \dfrac{\partial^2 u}{\partial t^2}(s) ds\right],
\]
\[
r_{1,H}^{n + \frac{1}{2}} \coloneqq \dfrac{1}{2} \left[\,\, \int\limits_{\mathclap{t^{n + \frac{1}{2}}}}^{t^{n+1}} (t^{n+1} - s) \dfrac{\partial^2 H}{\partial t^2}(s) ds - \int\limits_{t^n}^{t^{n + \frac{1}{2}}} (t^n - s) \dfrac{\partial^2 H}{\partial t^2}(s) ds\right],
\]
\[
r_{2,u}^n \coloneqq \dfrac{1}{2} \left[\int\limits_{\mathclap{t^n}}^{t^{n + \frac{1}{2}}} \dfrac{(t^{n + \frac{1}{2}} - s)^3}{6} \dfrac{\partial^4 u}{\partial t^4}(s) ds - \int\limits_{t^{n - \frac{1}{2}}}^{t^n} \dfrac{(t^{n - \frac{1}{2}} - s)^3}{6} \dfrac{\partial^4 u}{\partial t^4}(s) ds\right],
\]
\[
r_{2,H}^{n + \frac{1}{2}} \coloneqq \dfrac{1}{2} \left[\,\, \int\limits_{\mathclap{t^{n + \frac{1}{2}}}}^{t^{n+1}} \dfrac{(t^{n+1} - s)^3}{6} \dfrac{\partial^4 H}{\partial t^4}(s) ds - \int\limits_{t^n}^{t^{n + \frac{1}{2}}} \dfrac{(t^n - s)^3}{6} \dfrac{\partial^4 H}{\partial t^4}(s) ds\right].
\]
Using these, we obtain the following:
\begin{subequations}
 \begin{multline}
  \aInnerproduct{\dfrac{p(t^{n + \frac{1}{2}}) - p(t^{n - \frac{1}{2}})}{\Delta t}}{\widetilde{p}} - \aInnerproduct{\dfrac{\varepsilon}{2} \left( E(t^{n + \frac{1}{2}}) + E(t^{n - \frac{1}{2}}) \right)}{\nabla \widetilde{p}} + \dfrac{\Delta t^2}{12} \aInnerproduct{\dfrac{\varepsilon}{2} \nabla \nabla \cdot \left( E(t^{n + \frac{1}{2}}) + E(t^{n - \frac{1}{2}}) \right)}{\nabla \widetilde{p}} \\ =\aInnerproduct{R_p^{n} + \varepsilon \nabla \cdot r_{2,E}^n - \dfrac{\Delta t^2}{12} \varepsilon \nabla \cdot \nabla \nabla \cdot r_{1,E}^n }{\widetilde{p}}, \label{eqn:remainder_p_lf4}
  \end{multline} \\
  \begin{multline}
   \aInnerproduct{\dfrac{1}{2} \nabla \left(p(t^{n + \frac{1}{2}}) + p(t^{n - \frac{1}{2}}) \right)}{\widetilde{E}} - \dfrac{\Delta t^2}{12} \aInnerproduct{\dfrac{1}{2} \nabla \left(p(t^{n + \frac{1}{2}}) + p(t^{n - \frac{1}{2}}) \right)}{\nabla \nabla \cdot \widetilde{E}} + \aInnerproduct{\varepsilon \dfrac{E(t^{n + \frac{1}{2}}) - E(t^{n - \frac{1}{2}})}{\Delta t}}{\widetilde{E}} \\ - \aInnerproduct{\dfrac{1}{2} \left( H(t^{n + 1}) + H(t^n) \right)}{\nabla \times \widetilde{E}}  - \dfrac{\Delta t^2}{12} \aInnerproduct{\dfrac{1}{2} \mu^{-1}\varepsilon^{-1} \nabla \times \nabla \times \left( H(t^{n + 1}) + H(t^n) \right)}{\nabla \times \widetilde{E}} \\ =  \aInnerproduct{\varepsilon R_E^n + \nabla r_{2,p}^n - \nabla \times r_{2,H}^{n+\frac{1}{2}} - \dfrac{\Delta t^2}{12} \nabla \nabla \cdot \nabla r_{1,p}^n - \dfrac{\Delta t^2}{12}  \mu^{-1}\varepsilon^{-1} \nabla \times \nabla \times \nabla \times r_{1,H}^{n+\frac{1}{2}}}{\widetilde{E}}, \label{eqn:remainder_E_lf4} 
     \end{multline} \\
      \begin{multline}
  \aInnerproduct{\mu \dfrac{H(t^{n + 1}) - H(t^n)}{\Delta t}}{\widetilde{H}} +  \aInnerproduct{\dfrac{1}{2} \nabla \times \left( E(t^{n + \frac{1}{2}}) + E(t^{n - \frac{1}{2}}) \right)}{\widetilde{H}} \\ +  \dfrac{\Delta t^2}{12} \aInnerproduct{\dfrac{1}{2} \varepsilon^{-1}\mu^{-1} \nabla \times \left( E(t^{n + \frac{1}{2}}) + E(t^{n - \frac{1}{2}}) \right)}{\nabla \times  \nabla \times \widetilde{H}} \\ =  \aInnerproduct{\mu R_H^{n + \frac{1}{2}} + \nabla \times r_{2,E}^n + \dfrac{\Delta t^2}{12} \varepsilon^{-1}\mu^{-1} \nabla \times \nabla \times \nabla \times r_{1,E}^n}{\widetilde{H}}. \label{eqn:remainder_H_lf4}
  \end{multline}
\end{subequations}
Then, subtracting these from each of their respective LF$_4$ scheme equations as in Equations~\labelcref{eqn:maxwell_p_lf4,eqn:maxwell_E_lf4,eqn:maxwell_H_lf4} leads us to the following set of equations:
\begin{multline*}
\aInnerproduct{\dfrac{e_p^{n + \frac{1}{2}} - e_p^{n - \frac{1}{2}}}{\Delta t}}{\widetilde{p}} - \dfrac{1}{2} \aInnerproduct{ \varepsilon \left(e_E^{n + \frac{1}{2}} + e_E^{n - \frac{1}{2}} \right)}{\nabla \widetilde{p}} + \dfrac{\Delta t^2}{24} \aInnerproduct{\varepsilon \nabla \nabla \cdot \left( e_E^{n + \frac{1}{2}} + e_E^{n - \frac{1}{2}} \right)}{\nabla \widetilde{p}} \\ = \aInnerproduct{R_p^{n} + \varepsilon \nabla \cdot r_{2,E}^n - \dfrac{\Delta t^2}{12} \varepsilon \nabla \cdot \nabla \nabla \cdot r_{1,E}^n }{\widetilde{p}}, 
 \end{multline*} 
\begin{multline*}
 \dfrac{1}{2} \aInnerproduct{\nabla \left(e_p^{n + \frac{1}{2}} + e_p^{n - \frac{1}{2}}\right)}{\widetilde{E}}  - \dfrac{\Delta t^2}{24} \aInnerproduct{\nabla \left(e_p^{n + \frac{1}{2}} + e_p^{n - \frac{1}{2}} \right)}{\nabla \nabla \cdot \widetilde{E}} + \aInnerproduct{\varepsilon \dfrac{e_E^{n + \frac{1}{2}} - e_E^{n - \frac{1}{2}}}{\Delta t}}{\widetilde{E}} \\ - \dfrac{1}{2} \aInnerproduct{e_H^{n + 1} + e_H^n}{\nabla \times \widetilde{E}}  - \dfrac{\Delta t^2}{24} \aInnerproduct{ \mu^{-1}\varepsilon^{-1} \nabla \times \nabla \times \left( e_H^{n + 1} + e_H^n \right)}{\nabla \times \widetilde{E}} \\ = \aInnerproduct{\varepsilon R_E^n + \nabla r_{2,p}^n - \nabla \times r_{2,H}^{n+\frac{1}{2}} - \dfrac{\Delta t^2}{12} \nabla \nabla \cdot \nabla r_{1,p}^n - \dfrac{\Delta t^2}{12}  \mu^{-1}\varepsilon^{-1} \nabla \times \nabla \times \nabla \times r_{1,H}^{n+\frac{1}{2}}}{\widetilde{E}},
  \end{multline*} 
 \begin{multline*}
\aInnerproduct{\mu \dfrac{e_H^{n + 1} - e_H^{n}}{\Delta t}}{\widetilde{H}} +  \dfrac{1}{2}\aInnerproduct{\nabla \times\left(e_E^{n+\frac{1}{2}} + e_E^{n - \frac{1}{2}} \right)}{\widetilde{H}} +  \dfrac{\Delta t^2}{24} \aInnerproduct{\varepsilon^{-1}\mu^{-1} \nabla \times \left(e_E^{n + \frac{1}{2}} + e_E^{n - \frac{1}{2}} \right)}{\nabla \times  \nabla \times \widetilde{H}} \\ = \aInnerproduct{\mu R_H^{n + \frac{1}{2}} + \nabla \times r_{2,E}^n + \dfrac{\Delta t^2}{12} \varepsilon^{-1}\mu^{-1} \nabla \times \nabla \times \nabla \times r_{1,E}^n}{\widetilde{H}}.
 \end{multline*}
Likewise, for the semidiscrete approximation of the initial system as in Equations~\labelcref{eqn:maxwell_p0_lf4,eqn:maxwell_E0_lf4,eqn:maxwell_H0_lf4}, we obtain for their errors the following system of equations:
 \begin{multline*}
  \aInnerproduct{\dfrac{e_p^{\frac{1}{2}} - e_p^0}{\Delta t/2}}{\widetilde{p}} - \dfrac{1}{4} \aInnerproduct{ \varepsilon \left(e_E^{\frac{1}{2}} + e_E^0 \right)}{\nabla \widetilde{p}} + \dfrac{\Delta t^2}{192} \aInnerproduct{\varepsilon \nabla \nabla \cdot \left( e_E^{\frac{1}{2}} + e_E^0 \right)}{\nabla \widetilde{p}} \\ = \aInnerproduct{\dfrac{1}{2} R_p^0 -   \dfrac{\varepsilon}{2} \nabla \cdot r_{2,E}^0 - \dfrac{\Delta t^2}{96} \varepsilon \nabla \cdot \nabla \nabla \cdot r_{1,E}^0}{\widetilde{p}}, 
  \end{multline*} 
 \begin{multline*}
  \dfrac{1}{4} \aInnerproduct{\nabla \left(e_p^{\frac{1}{2}} + e_p^0 \right)}{\widetilde{E}} - \dfrac{\Delta t^2}{192} \aInnerproduct{ \nabla \left( e_p^{\frac{1}{2}} +  e_p^0 \right)}{\nabla \nabla \cdot \widetilde{E}} + \aInnerproduct{\varepsilon \dfrac{e_E^{\frac{1}{2}} - e_E^0}{\Delta t/2}}{\widetilde{E}}  - \dfrac{1}{2} \aInnerproduct{e_H^1 + e_H^0}{\nabla \times \widetilde{E}} \\ - \dfrac{\Delta t^2}{96}  \aInnerproduct{\mu^{-1}\varepsilon^{-1}  \nabla \times \nabla \times \left( e_H^1 + e_H^0 \right)}{\widetilde{E}} \\ = \aInnerproduct{\dfrac{\varepsilon}{2} R_E^0 + \dfrac{1}{2} \nabla r_{2,p}^0 - \nabla \times r_{2,H}^{\frac{1}{2}} - \dfrac{\Delta t^2}{96} \nabla \nabla \cdot \nabla r_{1,p}^0 - \dfrac{\Delta t^2}{48}  \mu^{-1}\varepsilon^{-1} \nabla \times \nabla \times \nabla \times r_{1,H}^{\frac{1}{2}}}{\widetilde{E}}, 
\end{multline*} 
 \begin{multline*}
 \aInnerproduct{\mu \dfrac{e_H^1 - e_H^0}{\Delta t}}{\widetilde{H}} + \dfrac{1}{4} \aInnerproduct{\nabla \times \left( e_E^{\frac{1}{2}} + e_E^0 \right)}{\widetilde{H}} + \dfrac{\Delta t^2}{192} \aInnerproduct{ \varepsilon^{-1}\mu^{-1} \nabla \times \left(e_E^{\frac{1}{2}} + e_E^0 \right)}{ \nabla \times \nabla \times  \widetilde{H}}\\  = \aInnerproduct{\mu R_H^{\frac{1}{2}}+ \dfrac{1}{2} \nabla \times r_{2,E}^0 + \dfrac{\Delta t^2}{96} \varepsilon^{-1}\mu^{-1} \nabla \times \nabla \times \nabla \times r_{1,E}^0}{\widetilde{H}},
\end{multline*}
and here we define the initial remainder terms to be as follows:
\begin{align*}
R^0_u & \coloneq \dfrac{1}{\Delta t} \left[ \int\limits_{t^0}^{t^\frac{1}{4}} \dfrac{(t^0 - s)^4}{24} \dfrac{\partial^5 u}{\partial t^5}(s) ds + \int\limits_{t^\frac{1}{4}}^{t^{\frac{1}{2}}} \dfrac{(t^{\frac{1}{2}} - s)^4}{24} \dfrac{\partial^5 u}{\partial t^5}(s) ds \right], \\
R^{ \frac{1}{2}}_H & \coloneq \dfrac{1}{\Delta t} \left[ \int\limits_{t^0}^{t^{ \frac{1}{2}}} \dfrac{(t^0 - s)^4}{24} \dfrac{\partial^5 H}{\partial t^5}(s) ds + \int\limits_{t^{\frac{1}{2}}}^{t^{1}} \dfrac{(t^{1} - s)^4}{24} \dfrac{\partial^5 H}{\partial t^5}(s) ds\right], \\
r_{1,u}^0 & \coloneqq \dfrac{1}{2} \left[\int\limits_{\mathclap{t^\frac{1}{4}}}^{t^{ \frac{1}{2}}} (t^{ \frac{1}{2}} - s) \dfrac{\partial^2 u}{\partial t^2}(s) ds - \int\limits_{t^0}^{t^\frac{1}{4}} (t^0 - s) \dfrac{\partial^2 u}{\partial t^2}(s) ds\right], \\
r_{1,H}^{ \frac{1}{2}} & \coloneqq \dfrac{1}{2} \left[\,\, \int\limits_{\mathclap{t^{\frac{1}{2}}}}^{t^{1}} (t^{1} - s) \dfrac{\partial^2 H}{\partial t^2}(s) ds - \int\limits_{t^0}^{t^{\frac{1}{2}}} (t^0 - s) \dfrac{\partial^2 H}{\partial t^2}(s) ds\right], \\
r_{2,u}^0 & \coloneqq \dfrac{1}{2} \left[\int\limits_{\mathclap{t^\frac{1}{4}}}^{t^{ \frac{1}{2}}} \dfrac{(t^{ \frac{1}{2}} - s)^3}{6} \dfrac{\partial^4 u}{\partial t^4}(s) ds - \int\limits_{t^0}^{t^\frac{1}{4}} \dfrac{(t^0 - s)^3}{6} \dfrac{\partial^4 u}{\partial t^4}(s) ds\right], \\
r_{2,H}^{ \frac{1}{2}} & \coloneqq \dfrac{1}{2} \left[\,\, \int\limits_{\mathclap{t^{\frac{1}{2}}}}^{t^{1}} \dfrac{(t^{1} - s)^3}{6} \dfrac{\partial^4 H}{\partial t^4}(s) ds - \int\limits_{t^0}^{t^{\frac{1}{2}}} \dfrac{(t^0 - s)^3}{6} \dfrac{\partial^4 H}{\partial t^4}(s) ds\right].
\end{align*}
Now, in this set of weak formulation equations for the errors, we choose the test functions to be $\widetilde{p} = 2 \Delta t \varepsilon^{-1} \left( e_p^{n + \frac{1}{2}} + e_p^{n - \frac{1}{2}} \right)$, $\widetilde{E} = 2 \Delta t \left( e_E^{n + \frac{1}{2}} + e_E^{n - \frac{1}{2}} \right)$ and $\widetilde{H} = 2 \Delta t \left( e_H^n + e_H^{n - 1} \right)$. Next, by following essentially the same sequence of steps as in Theorem~\ref{thm:dscrt_enrgy_estmt_lf4}, we obtain the estimate for these error terms to be:
\begin{multline*}
  \norm{e_p^{n + \frac{1}{2}}}^2_{\varepsilon^{-1}} - \norm{e_p^{n - \frac{1}{2}}}^2_{\varepsilon^{-1}} + \norm{e_E^{n + \frac{1}{2}}}^2_{\varepsilon} - \norm{e_E^{n - \frac{1}{2}}}^2_{\varepsilon} + \norm{e_H^{n+1}}^2_{\mu} -  \norm{e_H^n}^2_{\mu} \\ \le \Delta t \left[ \norm{e_p^{n + \frac{1}{2}}}^2_{\varepsilon^{-1}} + \norm{e_p^{n - \frac{1}{2}}}^2_{\varepsilon^{-1}} + \norm{e_E^{n + \frac{1}{2}}}^2_{\varepsilon} + \norm{e_E^{n - \frac{1}{2}}}^2_{\varepsilon} + \norm{e_H^{n+1}}^2_{\mu} +  \norm{e_H^n}^2_{\mu}\right] \\ +
 \Delta t \left[ \norm{R_p^n}^2_{\varepsilon^{-1}} + \norm{R_E^n}^2_{\varepsilon} + \norm{R_H^{n + \frac{1}{2}}}^2_{\mu} + \norm{\nabla r_{2,p}^n} ^2_{\varepsilon^{-1}} + \norm{\nabla \cdot r_{2,E}^n}^2_{\varepsilon} + \varepsilon^{-1} \mu^{-1} \norm{\nabla \times r_{2,E}^n}^2_{\varepsilon} \right. \\ +  \varepsilon^{-1} \mu^{-1} \norm{\nabla \times r_{2,H}^{n + \frac{1}{2}}}^2_{\mu} 
\left. + \dfrac{\Delta t^4}{144} \left( \norm{\nabla \nabla \cdot \nabla r_{1,p}^n} ^2_{\varepsilon^{-1}} + \norm{\nabla \cdot \nabla \nabla \cdot r_{1,E}^n}^2_{\varepsilon} + \varepsilon^{-3} \mu^{-3} \norm{\nabla \times \nabla \times \nabla \times r_{1,E}^n}^2_{\varepsilon} \right. \right. \\ \left. \left.+  \varepsilon^{-3} \mu^{-3} \norm{\nabla \times \nabla \times \nabla \times r_{1,H}^{n + \frac{1}{2}}}^2_{\mu} \right) \right].
\end{multline*}
Summing over $n = 1$ to $N-1$, we get:
\begin{multline*}
  \norm{e_p^{N - \frac{1}{2}}}^2_{\varepsilon^{-1}} - \norm{e_p^{ \frac{1}{2}}}^2_{\varepsilon^{-1}} + \norm{e_E^{N - \frac{1}{2}}}^2_{\varepsilon} - \norm{e_E^{\frac{1}{2}}}^2_{\varepsilon} + \norm{e_H^N}^2_{\mu} -  \norm{e_H^1}^2_{\mu} \le \Delta t \left[ \norm{e_p^{N - \frac{1}{2}}}^2_{\varepsilon^{-1}} + \norm{e_p^{ \frac{1}{2}}}^2_{\varepsilon^{-1}} \right. \\ \left. + \norm{e_E^{N - \frac{1}{2}}}^2_{\varepsilon} + \norm{e_E^{\frac{1}{2}}}^2_{\varepsilon} + \norm{e_H^N}^2_{\mu} + \norm{e_H^1}^2_{\mu}\right] + 2 \Delta t \sum\limits_{n = 1}^{N - 2} \left[ \norm{e_p^{n + \frac{1}{2}}}^2_{\varepsilon^{-1}} + \norm{e_E^{n + \frac{1}{2}}}^2_{\varepsilon} + \norm{e_H^{n+1}}^2_{\mu}\right] \\ +
 \Delta t \sum\limits_{n = 1}^{N - 1} \left[ \norm{R_p^n}^2_{\varepsilon^{-1}} + \norm{R_E^n}^2_{\varepsilon} + \norm{R_H^{n + \frac{1}{2}}}^2_{\mu} + \norm{\nabla r_{2,p}^n} ^2_{\varepsilon^{-1}} + \norm{\nabla \cdot r_{2,E}^n}^2_{\varepsilon} + \varepsilon^{-1} \mu^{-1} \norm{\nabla \times r_{2,E}^n}^2_{\varepsilon} \right. \\ \left. +  \varepsilon^{-1} \mu^{-1} \norm{\nabla \times r_{2,H}^{n + \frac{1}{2}}}^2_{\mu} + \dfrac{\Delta t^4}{144} \left( \norm{\nabla \nabla \cdot \nabla r_{1,p}^n} ^2_{\varepsilon^{-1}} + \norm{\nabla \cdot \nabla \nabla \cdot r_{1,E}^n}^2_{\varepsilon} + \varepsilon^{-3} \mu^{-3} \norm{\nabla \times \nabla \times \nabla \times r_{1,E}^n}^2_{\varepsilon} \right. \right. \\ \left. \left. +  \varepsilon^{-3} \mu^{-3} \norm{\nabla \times \nabla \times \nabla \times r_{1,H}^{n + \frac{1}{2}}}^2_{\mu} \right) \right].
\end{multline*}
Similarly, for the initial Equations~\labelcref{eqn:maxwell_p0_lf4,eqn:maxwell_E0_lf4,eqn:maxwell_H0_lf4} which bootstrap our computations, by choosing test functions to be $\widetilde{p} = 2 \Delta t \varepsilon^{-1} \left( e_p^\frac{1}{2} + e_p^0 \right)$, $\widetilde{E} = 2 \Delta t \left( e_E^\frac{1}{2}  + e_E^0 \right)$ and $\widetilde{H} = 4 \Delta t \left( e_H^1 + e_H^0 \right)$, we obtain:
\begin{multline*}
  \norm{e_p^{\frac{1}{2}}}^2_{\varepsilon^{-1}} - \norm{e_p^0}^2_{\varepsilon^{-1}} + \norm{e_E^{\frac{1}{2}}}^2_{\varepsilon} - \norm{e_E^0}^2_{\varepsilon} + \norm{e_H^{1}}^2_{\mu} -  \norm{e_H^0}^2_{\mu} \\ \le \Delta t \left[   \norm{e_p^{\frac{1}{2}}}^2_{\varepsilon^{-1}} + \norm{e_p^0}^2_{\varepsilon^{-1}} + \norm{e_E^{\frac{1}{2}}}^2_{\varepsilon} + \norm{e_E^0}^2_{\varepsilon} + \norm{e_H^{1}}^2_{\mu} +  \norm{e_H^0}^2_{\mu} \right] \\ +
 \Delta t \left[ \norm{R_p^0}^2_{\varepsilon^{-1}} + \norm{R_E^0}^2_{\varepsilon} + \norm{R_H^{\frac{1}{2}}}^2_{\mu} + \norm{\nabla r_{2,p}^0} ^2_{\varepsilon^{-1}} + \norm{\nabla \cdot r_{2,E}^0}^2_{\varepsilon} + \varepsilon^{-1} \mu^{-1} \norm{\nabla \times r_{2,E}^0}^2_{\varepsilon}  \right. \\ 
\left.+  \varepsilon^{-1} \mu^{-1} \norm{\nabla \times r_{2,H}^{\frac{1}{2}}}^2_{\mu} + \dfrac{\Delta t^4}{144} \left( \norm{\nabla \nabla \cdot \nabla r_{1,p}^0} ^2_{\varepsilon^{-1}} + \norm{\nabla \cdot \nabla \nabla \cdot r_{1,E}^0}^2_{\varepsilon} + \varepsilon^{-3} \mu^{-3} \norm{\nabla \times \nabla \times \nabla \times r_{1,E}^0}^2_{\varepsilon}  \right.  \right. \\ \left. \left. +  \varepsilon^{-3} \mu^{-3} \norm{\nabla \times \nabla \times \nabla \times r_{1,H}^{\frac{1}{2}}}^2_{\mu} \right) \right].
\end{multline*}
By next adding the previous two equations, using the initial conditions as in Equation~\eqref{eqn:ICs}, and the positivity of the right hand side terms in the resulting equation, we get that:
\begin{multline*}
  \norm{e_p^{N - \frac{1}{2}}}^2_{\varepsilon^{-1}} + \norm{e_E^{N - \frac{1}{2}}}^2_{\varepsilon} + \norm{e_H^N}^2_{\mu} \le \dfrac{1 + \Delta t}{1 - \Delta t} \left[ \norm{e_p^0}^2_{\varepsilon^{-1}} + \norm{e_E^0}^2_{\varepsilon} + \norm{e_H^0}^2_{\mu} \right] + \\
  \dfrac{\Delta t}{1 - \Delta t} \sum\limits_{n = 0}^{N - 1} \Big[ 2 \left( \norm{e_p^{n + \frac{1}{2}}}^2_{\varepsilon^{-1}} + \norm{e_E^{n + \frac{1}{2}}}^2_{\varepsilon} + \norm{e_H^{n + 1}}^2_{\mu} \right) + \left( \norm{R_p^n}^2_{\varepsilon^{-1}} + \norm{R_E^n}^2_{\varepsilon} + \norm{R_H^{n + \frac{1}{2}}}^2_{\mu} \right. \\ \left.
+ \norm{\nabla r_{2,p}^n} ^2_{\varepsilon^{-1}} + \norm{\nabla \cdot r_{2,E}^n}^2_{\varepsilon} + \varepsilon^{-1} \mu^{-1} \norm{\nabla \times r_{2,E}^n}^2_{\varepsilon} +  \varepsilon^{-1} \mu^{-1} \norm{\nabla \times r_{2,H}^{n + \frac{1}{2}}}^2_{\mu} \right)
+ \dfrac{\Delta t^4}{144} \left( \norm{\nabla \nabla \cdot \nabla r_{1,p}^n} ^2_{\varepsilon^{-1}}  \right. \\ 
\left. + \norm{\nabla \cdot \nabla \nabla \cdot r_{1,E}^n}^2_{\varepsilon} + \varepsilon^{-3} \mu^{-3} \norm{\nabla \times \nabla \times \nabla \times r_{1,E}^n}^2_{\varepsilon} +  \varepsilon^{-3} \mu^{-3} \norm{\nabla \times \nabla \times \nabla \times r_{1,H}^{n + \frac{1}{2}}}^2_{\mu} \right) \Big].
\end{multline*}
Applying the discrete Gronwall inequality, we obtain the following estimate:
\begin{multline*}
  \norm{e_p^{N - \frac{1}{2}}}^2_{\varepsilon^{-1}} + \norm{e_E^{N - \frac{1}{2}}}^2_{\varepsilon} +\norm{e_H^N}^2_{\mu} \le \Bigg[ \dfrac{6 \Delta t}{5} \sum\limits_{n = 0}^{N - 1} \left(\norm{R_p^n}^2_{\varepsilon^{-1}} + \norm{R_E^n}^2_{\varepsilon} + \norm{R_H^{n + \frac{1}{2}}}^2_{\mu} \right. \\ + \left. \norm{\nabla r_{2,p}^n} ^2_{\varepsilon^{-1}} + \norm{\nabla \cdot r_{2,E}^n}^2_{\varepsilon} + \varepsilon^{-1} \mu^{-1} \norm{\nabla \times r_{2,E}^n}^2_{\varepsilon} +  \varepsilon^{-1} \mu^{-1} \norm{\nabla \times r_{2,H}^{n + \frac{1}{2}}}^2_{\mu} \right. \\ 
+ \left. \dfrac{\Delta t^4}{144} \left( \norm{\nabla \nabla \cdot \nabla r_{1,p}^n} ^2_{\varepsilon^{-1}} + \norm{\nabla \cdot \nabla \nabla \cdot r_{1,E}^n}^2_{\varepsilon} + \varepsilon^{-3} \mu^{-3} \norm{\nabla \times \nabla \times \nabla \times r_{1,E}^n}^2_{\varepsilon} \right. \right. \\  \left. \left. +  \varepsilon^{-3} \mu^{-3} \norm{\nabla \times \nabla \times \nabla \times r_{1,H}^{n + \frac{1}{2}}}^2_{\mu} \right)  \right) +
  \dfrac{7}{5} \left(\norm{e_p^0}^2_{\varepsilon^{-1}} + \norm{e_E^0}^2_{\varepsilon}  + \norm{e_H^0}^2_\mu \right)\Bigg] \exp\left( 4 T \right).
\end{multline*}
Now, we need to obtain bounding estimates for each of the Taylor remainder terms and to do so, we first consider the first remainder term corresponding to $p$ and argue as follows:
\begin{align*}
\norm{R^n_p}^2_{\varepsilon^{-1}} &= \dfrac{1}{(24)^2 \Delta t^2} \norm[\bigg]{\int\limits_{\mathclap{t^{n - 1}}}^{\mathclap{t^{n - \frac{1}{2}}}} (t^{n - 1} - s)^4 \dfrac{\partial^5 p}{\partial t^5}(s) ds + \int\limits_{\mathclap{t^{n - \frac{1}{2}}}}^{t^n} (t^n - s)^4 \dfrac{\partial^5 p}{\partial t^5}(s) ds}^2_{\varepsilon^{-1}}, \\
&\le \dfrac{1}{(24)^2 \Delta t^2} \norm[\bigg]{\int\limits_{t^{n - 1}}^{t^n} (t^n - s)^4 \dfrac{\partial^5 p}{\partial t^5}(s) ds}^2_{\varepsilon^{-1}}, \quad \text{(using $t^{n - 1} < t^n$)} \\
&\le \dfrac{1}{(24)^2 \left(\Delta t\right)^2} \int\limits_{t^{n - 1}}^{t^n} (s - t^n)^8 ds \int\limits_{\mathclap{t^{n - 1}}}^{t^n} \norm[\bigg]{\dfrac{\partial^5 p}{\partial t^5}(s)}^2_{\varepsilon^{-1}} ds, \quad \text{(by Cauchy-Schwarz)} \\
&= \dfrac{\Delta t^7}{(24)^2 \cdot 9} \int\limits_{\mathclap{t^{n - 1}}}^{t^n} \norm[\bigg]{\dfrac{\partial^5 p}{\partial t^5}(s)}^2_{\varepsilon^{-1}} ds.
\end{align*}
Summing both sides over $n = 0$ to $N$, we have that:
\begin{equation*}
  \sum\limits_{n = 0}^N \norm{R^n_p}^2_{\varepsilon^{-1}} \le \dfrac{\Delta t^7}{(24)^2 \cdot 9} \int\limits_0^T \norm[\bigg]{\dfrac{\partial^5 p}{\partial t^5}(s)}^2_{\varepsilon^{-1}} ds = \dfrac{\Delta t^7}{(24)^2 \cdot 9} \norm[\bigg]{\dfrac{\partial^5 p}{\partial t^5}}^2_{L^2(0, T; L^2_{\varepsilon^{-1}}(\Omega))}.
\end{equation*}
Similarly, for the other Taylor remainder terms, we get that:
\begin{align*}
\norm{R^n_E}^2_{\varepsilon} & \le \dfrac{\Delta t^7}{(24)^2 \cdot 9} \norm[\bigg]{\dfrac{\partial^5 E}{\partial t^5}}^2_{L^2(0, T; L^2_\varepsilon(\Omega))}, \\
  \sum\limits_{n = 0}^{N - 1} \norm{R^{n + \frac{1}{2}}_H}^2_\mu &\le \dfrac{\Delta t^7}{(24)^2 \cdot 9} \norm[\bigg]{\dfrac{\partial^5 H}{\partial t^5}}^2_{L^2(0, T; L^2_\mu(\Omega))}, \\ \sum\limits_{n = 0}^{N - 1} \norm{\nabla r^n_{2,p}}^2_{\varepsilon^{-1}} &\le  \dfrac{\Delta t^7}{2^7 \cdot 72} \norm[\bigg]{\dfrac{\partial^4 \left(\nabla p \right)}{\partial t^4}}^2_{L^2(0, T; L^2_{\varepsilon^{-1}}(\Omega))}, \\
\sum\limits_{n = 0}^{N - 1} \norm{\nabla \cdot r^n_{2,E}}^2_{\varepsilon} &\le \dfrac{\Delta t^7}{2^7 \cdot 72} \norm[\bigg]{\dfrac{\partial^4 (\nabla \cdot E)}{\partial t^4}}^2_{L^2(0, T; L^2_\varepsilon(\Omega))},  \\
 \sum\limits_{n = 0}^{N - 1} \norm{\nabla \times r^n_{2,E}}^2_{\varepsilon} &\le \dfrac{\Delta t^7}{2^7 \cdot 72} \norm[\bigg]{\dfrac{\partial^4 (\nabla \times E)}{\partial t^4}}^2_{L^2(0, T; L^2_{\varepsilon}(\Omega))}, \\
\sum\limits_{n = 0}^{N - 1} \norm{\nabla \times r^{n + \frac{1}{2}}_{2,H}}^2_\mu &\le \dfrac{\Delta t^7}{2^7 \cdot 72} \norm[\bigg]{\dfrac{\partial^4 (\nabla \times H)}{\partial t^4}}^2_{L^2(0, T; L^2_\mu(\Omega))}, \\ \sum\limits_{n = 0}^{N - 1} \norm{\nabla \nabla \cdot \nabla r^n_{1,p}}^2_{\varepsilon^{-1}} &\le  \dfrac{\Delta t^3}{48} \norm[\bigg]{\dfrac{\partial^2 \left(\nabla \nabla \cdot \nabla p \right)}{\partial t^2}}^2_{L^2(0, T; L^2_{\varepsilon^{-1}}(\Omega))}, \\
\sum\limits_{n = 0}^{N - 1} \norm{\nabla \cdot \nabla \nabla \cdot r^n_{1,E}}^2_{\varepsilon} &\le \dfrac{\Delta t^3}{48} \norm[\bigg]{\dfrac{\partial^2 (\nabla \cdot \nabla \nabla \cdot E)}{\partial t^2}}^2_{L^2(0, T; L^2_\varepsilon(\Omega))},  \\
 \sum\limits_{n = 0}^{N - 1} \norm{\nabla \times \nabla \times \nabla \times r^n_{1,E}}^2_{\varepsilon} &\le \dfrac{\Delta t^3}{48} \norm[\bigg]{\dfrac{\partial^2 (\nabla \times \nabla \times \nabla \times E)}{\partial t^2}}^2_{L^2(0, T; L^2_{\varepsilon}(\Omega))}, \\
\sum\limits_{n = 0}^{N - 1} \norm{\nabla \times \nabla \times \nabla \times r^{n + \frac{1}{2}}_{1,H}}^2_\mu &\le \dfrac{\Delta t^3}{48} \norm[\bigg]{\dfrac{\partial^2 (\nabla \times \nabla \times \nabla \times H)}{\partial t^2}}^2_{L^2(0, T; L^2_\mu(\Omega))}.
\end{align*}
Finally, using the regularity assumptions for $p$, $E$ and $H$, and $1$- and $2$-norm equivalence, we obtain our required result:
\[
  \norm{e_p^{N - \frac{1}{2}}}_{\varepsilon^{-1}} + \norm{e_E^{N - \frac{1}{2}}}_{\varepsilon} + \norm{e_H^N}_{\mu} \le C \left[ \Delta t^4 + \norm{e_p^0}_{\varepsilon^{-1}} + \norm{e_E^0}_{\varepsilon} + \norm{e_H^0}_{\mu} \right]. \qedhere
\]
\end{proof}

\subsection{Error Estimate for Full Discretization}

We now present the error analysis for the full discretization of the three-field formulation of the Maxwell's equations using a compatible sequence of arbitrary order de Rham finite elements in conjunction with the implicit LF$_4$ time integration method. To do so, first we let $\Pi_h^0$, $\Pi_h^1$ and $\Pi_h^2$ denote the respective smoothed $L^2$ projection operators as in Theorem~\labelcref{thm:arfawismoothedprojection}. That is, let $\Pi_h^0: \mathring{H}^1_{\varepsilon^{-1}}(\Omega) \longto U_h$, $\Pi_h^1: \mathring{H}_{\varepsilon}(\curl; \Omega) \longto V_h$ and $\Pi_h^2: \mathring{H}_{\mu}(\divgn; \Omega) \longto W_h$ denote these smoothed $L^2$ projection operators. Details of these operators is now standard and can be found in many places such as \cite{Schoberl2008,Christiansen2007}, \cite[Lemma 4.3.8]{Brenner2008},~\cite[Theorem 5.3]{ArFaWi2006}, and~\cite[Lemma 11.9, Corollary 11.11, Theorem 16.10, Theorem 17.5]{ErGu2021}. 

We now define the errors for $p$, $E$ and $H$ at time $(n + 1/2) \Delta t$ or $n \Delta t$ under the full discretization to be the following and this is essentially identical to the sequence of analysis for the full error as in \cite[Section 5]{ArKa2025} for the method described in that work. So, we  have that:
\begin{alignat}{2}
  e_{p_h}^{n + \frac{1}{2}} &\coloneq p(t^{n + \frac{1}{2}}) - p_h^{n + \frac{1}{2}} &&= \eta^{n+\frac{1}{2}} - \eta_h^{n+\frac{1}{2}}, \label{eqn:p_fullerror_lf4} \\
  e_{E_h}^{n + \frac{1}{2}} &\coloneq E(t^{n + \frac{1}{2}}) - E_h^{n + \frac{1}{2}} &&= \zeta^{n + \frac{1}{2}} - \zeta_h^{n + \frac{1}{2}}, \label{eqn:E_fullerror_lf4} \\
  e_{H_h}^n &\coloneq H(t^n) - H_h^n &&= \xi^n - \xi_h^n, \label{eqn:H_fullerror_lf4}
\end{alignat}
and in which we now have the following definitions for the newly introduced terms:
\begin{alignat}{3}
  \eta^{n + \frac{1}{2}} &\coloneq p(t^{n + \frac{1}{2}}) - \Pi_h^0 p(t^{n + \frac{1}{2}}), &&\qquad \eta_h^{n + \frac{1}{2}} &&\coloneq p_h^{n + \frac{1}{2}}  - \Pi_h^0 p(t^{n + \frac{1}{2}}), \label{eqn:p_fullerror_sub_lf4} \\
  \zeta^{n + \frac{1}{2}} &\coloneq E(t^{n + \frac{1}{2}}) - \Pi_h^1 E(t^{n + \frac{1}{2}}), &&\qquad \zeta_h^{n + \frac{1}{2}} &&\coloneq E_h^{n + \frac{1}{2}} - \Pi_h^1 E(t^{n + \frac{1}{2}}), \label{eqn:E_fullerror_sub_lf4} \\
  \xi^n &\coloneq H(t^n) - \Pi_h^2 H(t^n), &&\qquad \xi_h^n &&\coloneq H_h^n - \Pi_h^2 H(t^n). \label{eqn:H_fullerror_sub_lf4}
\end{alignat}
For the LF$_4$ scheme as in Equations~\labelcref{eqn:maxwell_p_lf4,eqn:maxwell_E_lf4,eqn:maxwell_H_lf4}, using a de Rham sequence of finite dimensional subspaces of the corresponding function spaces for the spatial discretization of $(p^{n + \frac{1}{2}}, E^{n + \frac{1}{2}}, H^{n + 1})$, we obtain the following discrete problem: find $(p_h^{n + \frac{1}{2}}, E_h^{n + \frac{1}{2}}, H_h^{n + 1}) \in U_h \times V_h \times W_h \subseteq \mathring{H}_{\varepsilon^{-1}}^1 \times \mathring{H}_{\varepsilon}(\curl; \Omega) \times \mathring{H}_{\mu}(\divgn; \Omega)$ such that:
\begin{subequations}
\begin{equation}
  \aInnerproduct{\dfrac{p_h^{n + \frac{1}{2}} - p_h^{n - \frac{1}{2}}}{\Delta t}}{\widetilde{p}} - \aInnerproduct{\dfrac{\varepsilon}{2} \left( E_h^{n + \frac{1}{2}} + E_h^{n - \frac{1}{2}} \right)}{\nabla \widetilde{p}} + \dfrac{\Delta t^2}{12} \aInnerproduct{\dfrac{\varepsilon}{2} \nabla \nabla \cdot \left( E_h^{n + \frac{1}{2}} + E_h^{n - \frac{1}{2}} \right)}{\nabla \widetilde{p}} =0, \label{eqn:maxwell_p_lf4_full}
  \end{equation} \\
  \begin{multline}
   \aInnerproduct{\dfrac{1}{2} \nabla \left(p_h^{n + \frac{1}{2}} + p_h^{n - \frac{1}{2}} \right)}{\widetilde{E}} - \dfrac{\Delta t^2}{12} \aInnerproduct{\dfrac{1}{2} \nabla \left(p_h^{n + \frac{1}{2}} + p_h^{n - \frac{1}{2}} \right)}{\nabla \nabla \cdot \widetilde{E}} + \aInnerproduct{\varepsilon \dfrac{E_h^{n + \frac{1}{2}} - E_h^{n - \frac{1}{2}}}{\Delta t}}{\widetilde{E}} \\ - \aInnerproduct{\dfrac{1}{2} \left( H_h^{n + 1} + H_h^n \right)}{\nabla \times \widetilde{E}}  - \dfrac{\Delta t^2}{12} \aInnerproduct{\dfrac{1}{2} \mu^{-1}\varepsilon^{-1} \nabla \times \nabla \times \left( H_h^{n + 1} + H_h^n \right)}{\nabla \times \widetilde{E}} = 0, \label{eqn:maxwell_E_lf4_full} 
     \end{multline} \\
     \begin{multline}
  \aInnerproduct{\mu \dfrac{H_h^{n + 1} - H_h^n}{\Delta t}}{\widetilde{H}} +  \aInnerproduct{\dfrac{1}{2} \nabla \times \left( E_h^{n + \frac{1}{2}} + E_h^{n - \frac{1}{2}} \right)}{\widetilde{H}} \\ +  \dfrac{\Delta t^2}{12} \aInnerproduct{\dfrac{1}{2} \varepsilon^{-1}\mu^{-1} \nabla \times \left( E_h^{n + \frac{1}{2}} + E_h^{n - \frac{1}{2}} \right)}{\nabla \times  \nabla \times \widetilde{H}} = 0, \label{eqn:maxwell_H_lf4_full}
\end{multline}
\end{subequations}
for all $(\widetilde{p}, \widetilde{E}, \widetilde{H}) \in U_h \times V_h \times W_h$, $n = 1, \dots, N - 1$. The $n = 0$ bootstrapping as in Equations~\labelcref{eqn:maxwell_p0_lf4,eqn:maxwell_E0_lf4,eqn:maxwell_H0_lf4} leads also to the following discrete problem: find $(p_h^{\frac{1}{2}}, E_h^{\frac{1}{2}}, H_h^1) \in U_h \times V_h \times W_h \subseteq \mathring{H}_{\varepsilon^{-1}}^1 \times \mathring{H}_{\varepsilon}(\curl; \Omega) \times \mathring{H}_{\mu}(\divgn; \Omega)$ such that:
\begin{subequations}
  \begin{equation}
    \aInnerproduct{\dfrac{p_h^{\frac{1}{2}} - p_h^0}{\Delta t/2}}{\widetilde{p}} - \dfrac{1}{2} \aInnerproduct{\dfrac{\varepsilon}{2} \left( E_h^{\frac{1}{2}} + E_h^0 \right)}{\nabla \widetilde{p}} + \dfrac{1}{8} \cdot \dfrac{\Delta t^2}{12} \aInnerproduct{\dfrac{\varepsilon}{2} \nabla \nabla \cdot \left( E_h^{\frac{1}{2}} + E_h^0 \right)}{\nabla \widetilde{p}} = 0, \label{eqn:maxwell_p0_lf4_full}
    \end{equation} \\
    \begin{multline}
  \dfrac{1}{2}  \aInnerproduct{\dfrac{1}{2} \nabla \left(  p_h^{\frac{1}{2}} +  p_h^0 \right)}{\widetilde{E}} - \dfrac{1}{8} \cdot \dfrac{\Delta t^2}{12} \aInnerproduct{\dfrac{1}{2} \nabla \left(  p_h^{\frac{1}{2}} +  p_h^0 \right)}{\nabla \nabla \cdot \widetilde{E}} + \aInnerproduct{\varepsilon \dfrac{E_h^{\frac{1}{2}} - E_h^0}{\Delta t/2}}{\widetilde{E}}\\  - \aInnerproduct{\dfrac{1}{2} \left( H_h^1 + H_h^0 \right)}{\nabla \times \widetilde{E}} -\dfrac{1}{4} \cdot \dfrac{\Delta t^2}{12}  \aInnerproduct{\dfrac{1}{2} \nabla \times \nabla \times \left( H_h^1 + H_h^0 \right)}{\nabla \times \widetilde{E}} = 0, \label{eqn:maxwell_E0_lf4_full} 
  \end{multline} \\
  \begin{equation}
    \aInnerproduct{\mu \dfrac{H_h^1 - H_h^0}{\Delta t}}{\widetilde{H}} + \dfrac{1}{2} \aInnerproduct{\dfrac{1}{2} \nabla \times \left(E_h^{\frac{1}{2}} + E_h^0 \right)}{\widetilde{H}} + \dfrac{1}{8} \cdot \dfrac{\Delta t^2}{12} \aInnerproduct{\dfrac{1}{2} \nabla \times \left(E_h^{\frac{1}{2}} + E_h^0 \right)}{ \nabla \times \nabla \times  \widetilde{H}} = 0, \label{eqn:maxwell_H0_lf4_full}
  \end{equation}
for all $(\widetilde{p}, \widetilde{E}, \widetilde{H}) \in U_h \times V_h \times W_h$ given $(p_h^0, E_h^0, H_h^0) \in U_h \times V_h \times W_h$. For the initial conditions, we set: 
\begin{equation}
p_h^0 \coloneq \Pi_h^0 p_0, \, E_h^0 \coloneq \Pi_h^1 E_0 \text{~and~} H_h^0 \coloneq \Pi_h^2 H_0. \label{eqn:initial_lf4}
  \end{equation}
\end{subequations}

With this setup, we now state and prove our theorem for the convergence of errors in the full discretization of our system of Maxwell's equations using LF$_4$ and arbitrary order de Rham finite elements.

\begin{theorem}[Full Error Estimate]\label{thm:full_error_estmt_lf4}
Let $p \in C^5(0, T; \mathring{H}^1_{\varepsilon^{-1}}(\Omega))$, $E \in C^5(0, T; \mathring{H}_{\varepsilon}(\curl; \Omega))$, and $H \in C^5(0, T; \mathring{H}_{\mu}(\divgn; \Omega))$ be the solution to the variational formulation of the Maxwell's equations as in Equations~\labelcref{eqn:maxwell_p_wf,eqn:maxwell_E_wf,eqn:maxwell_H_wf} assuming sufficient regularity, and let $(p_h^{n + \frac{1}{2}}, E_h^{n + \frac{1}{2}}, H_h^{n + 1})$ be the solution of the fully discretized Maxwell's equations using the LF$_4$ scheme as in Equations~\labelcref{eqn:maxwell_p_lf4_full,eqn:maxwell_E_lf4_full,eqn:maxwell_H_lf4_full,eqn:maxwell_p0_lf4_full,eqn:maxwell_E0_lf4_full,eqn:maxwell_H0_lf4_full,eqn:initial_lf4}. If the time step $\Delta t > 0$ and the mesh parameter $h > 0$ are sufficiently small, then there exists a positive bounded constant $C$ independent of both $\Delta t$ and $h$ such that the following error estimate holds:
\[
  \norm{e_{p_h}^{N - \frac{1}{2}}}_{\varepsilon^{-1}} + \norm{e_{E_h}^{N - \frac{1}{2}}}_{\varepsilon} + \norm{e_{H_h}^N}_{\mu} \le C \left[ \Delta t^4 + h^r + h^r \Delta t^4 \right],
\]
where the finite element subspaces $U_h$, $V_h$ and $W_h$ are each spanned by their respective Whitney form basis of polynomial order $r \ge 1$.
\end{theorem}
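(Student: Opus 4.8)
The plan is to follow the by-now-standard Galerkin splitting already set up in Equations~\eqref{eqn:p_fullerror_lf4}--\eqref{eqn:H_fullerror_sub_lf4}, decomposing each error into an \emph{approximation} (projection) part and a \emph{discrete} part, namely $e_{p_h}^{n+\frac{1}{2}} = \eta^{n+\frac{1}{2}} - \eta_h^{n+\frac{1}{2}}$ and likewise for $E$ and $H$. By the triangle inequality it suffices to bound the two families separately. The projection parts $\eta$, $\zeta$, $\xi$ are controlled directly by the Arnold--Falk--Winther approximation estimate of Theorem~\ref{thm:arfawismoothedprojection}: under the stated regularity $p \in C^5(0,T;\mathring{H}^1_{\varepsilon^{-1}}(\Omega))$, $E \in C^5(0,T;\mathring{H}_{\varepsilon}(\curl;\Omega))$, $H \in C^5(0,T;\mathring{H}_{\mu}(\divgn;\Omega))$ together with the assumed $H^r$ spatial regularity, one obtains $\norm{\eta^{n+\frac{1}{2}}}_{\varepsilon^{-1}} + \norm{\zeta^{n+\frac{1}{2}}}_{\varepsilon} + \norm{\xi^n}_{\mu} \le C h^r$ uniformly in $n$. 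The work is therefore entirely in estimating the discrete parts $\eta_h$, $\zeta_h$, $\xi_h$, which already live in $U_h \times V_h \times W_h$.

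Next I would derive the error equations satisfied by $(\eta_h, \zeta_h, \xi_h)$. The exact solution, expanded via the Taylor-remainder identities of Theorem~\ref{thm:dscrt_error_estmt_lf4}, satisfies Equations~\eqref{eqn:remainder_p_lf4}--\eqref{eqn:remainder_H_lf4} for every test function, in particular for $(\widetilde{p}, \widetilde{E}, \widetilde{H}) \in U_h \times V_h \times W_h$; subtracting the fully discrete Equations~\eqref{eqn:maxwell_p_lf4_full}--\eqref{eqn:maxwell_H_lf4_full} and inserting $e = \eta - \eta_h$ (and its analogues) produces a system for $\eta_h, \zeta_h, \xi_h$ whose right-hand side is the sum of the Taylor remainders $R^n_p, R^n_E, R^{n+\frac{1}{2}}_H, r_{1,\cdot}, r_{2,\cdot}$ and new \emph{consistency} terms coming from the projection parts $\eta, \zeta, \xi$. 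The crucial structural input is the commuting-diagram property of the smoothed projections ($\nabla \Pi_h^0 = \Pi_h^1 \nabla$, $\nabla \times \Pi_h^1 = \Pi_h^2 \nabla \times$, and the analogous identity for $\divgn$) together with the boundedness of Theorem~\ref{thm:arfawismoothedprojection}. These make the leading-order coupling terms such as $\aInnerproduct{\varepsilon \zeta^{n+\frac{1}{2}}}{\nabla \widetilde{p}}$ and $\aInnerproduct{\xi^n}{\nabla \times \widetilde{E}}$ either vanish by the $L^2$-orthogonality of the projection error or reduce to quantities controlled by $C h^r$, so that the only surviving consistency contributions are essentially those sitting inside the $\Delta t^2/12$ fourth-order correction terms.

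With the error system in hand I would run exactly the energy argument of Theorems~\ref{thm:dscrt_enrgy_estmt_lf4} and~\ref{thm:dscrt_error_estmt_lf4}: choose the test functions $\widetilde{p} = 2 \Delta t\, \varepsilon^{-1}(\eta_h^{n+\frac{1}{2}} + \eta_h^{n-\frac{1}{2}})$, $\widetilde{E} = 2 \Delta t (\zeta_h^{n+\frac{1}{2}} + \zeta_h^{n-\frac{1}{2}})$, $\widetilde{H} = 2 \Delta t (\xi_h^n + \xi_h^{n-1})$, add the three equations so that the symmetric coupling and the fourth-order correction cross-terms telescope, then sum from $n = 1$ to $N-1$ and append the bootstrapped first step from Equations~\eqref{eqn:maxwell_p0_lf4_full}--\eqref{eqn:maxwell_H0_lf4_full}. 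The choice $p_h^0 = \Pi_h^0 p_0$, $E_h^0 = \Pi_h^1 E_0$, $H_h^0 = \Pi_h^2 H_0$ makes $\eta_h^0 = \zeta_h^0 = \xi_h^0 = 0$, so the only data on the right are (i) the Taylor remainders, bounded exactly as in Theorem~\ref{thm:dscrt_error_estmt_lf4} to give an $\mathcal{O}(\Delta t^4)$ contribution after the $\Delta t \sum_n$ summation, and (ii) the surviving projection-consistency terms, bounded by Theorem~\ref{thm:arfawismoothedprojection} to give an $\mathcal{O}(h^r)$ contribution, with the part entering through the $\Delta t^2/12$ corrections producing the higher-order cross-term $\mathcal{O}(h^r \Delta t^4)$. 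An application of the discrete Gronwall inequality (Lemma~\ref{lemma:gronwall_dscrt}), valid for $\Delta t$ small so that $\gamma_n \delta < 1$, then yields $\norm{\eta_h^{N-\frac{1}{2}}}_{\varepsilon^{-1}} + \norm{\zeta_h^{N-\frac{1}{2}}}_{\varepsilon} + \norm{\xi_h^N}_{\mu} \le C[\Delta t^4 + h^r + h^r \Delta t^4]$, and combining with the projection bounds via the triangle inequality delivers the claimed estimate.

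I expect the main obstacle to be the careful treatment of the fourth-order correction terms involving the iterated spatial derivatives $\nabla \nabla \cdot$ and $\nabla \times \nabla \times$. Unlike the first-order coupling, these do not vanish outright under $L^2$-orthogonality, so one must repeatedly invoke the commuting-diagram identities to move derivatives onto the finite-element test functions and to re-express $\nabla \cdot \Pi_h$ and $\nabla \times \Pi_h$ through $\Pi_h(\nabla \cdot\,)$ and $\Pi_h(\nabla \times\,)$ before the approximation estimate of Theorem~\ref{thm:arfawismoothedprojection} can be applied; tracking the resulting $\Delta t^2$ weights cleanly through the summation and the Gronwall step is precisely what produces, and bounds, the $h^r \Delta t^4$ cross-term. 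A secondary technical point is confirming that these corrected terms preserve the skew-symmetric telescoping structure exploited in Theorem~\ref{thm:dscrt_enrgy_estmt_lf4}, so that the energy identity survives the spatial discretization and the Gronwall constant $C$ remains independent of both $\Delta t$ and $h$.
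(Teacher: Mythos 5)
Your proposal follows essentially the same route as the paper's proof: the same splitting $e_{p_h} = \eta - \eta_h$ (and analogues), the same discrete test functions $\pm 2\Delta t\,\varepsilon^{-1}\bigl(\eta_h^{n+\frac{1}{2}} + \eta_h^{n-\frac{1}{2}}\bigr)$, $\pm 2\Delta t \bigl(\zeta_h^{n+\frac{1}{2}} + \zeta_h^{n-\frac{1}{2}}\bigr)$, $\pm 2\Delta t \bigl(\xi_h^{n} + \xi_h^{n-1}\bigr)$, cancellation of the coupling terms via the subcomplex property $\nabla U_h \subseteq V_h$, $\nabla \times V_h \subseteq W_h$, the Taylor-remainder bounds inherited from Theorem~\ref{thm:dscrt_error_estmt_lf4}, the bootstrapped first step, discrete Gronwall, the choice $\eta_h^0 = \zeta_h^0 = \xi_h^0 = 0$, and the Arnold--Falk--Winther estimates of Theorem~\ref{thm:arfawismoothedprojection} combined by the triangle inequality. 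The one small discrepancy is your attribution of the $h^r \Delta t^4$ cross-term: in the paper it arises from the projected Taylor remainders, via $\norm{(I - \Pi_h) R} \le C h^r \norm{R}$ with $\Delta t \sum_n \norm{R}^2 = \mathcal{O}(\Delta t^8)$, rather than from consistency contributions surviving inside the $\Delta t^2/12$ correction terms, but this does not change the structure or validity of the argument.
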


\begin{proof}
 First, we shall subtract the set of equations for the full discretization as in Equations~\labelcref{eqn:maxwell_p_lf4_full,eqn:maxwell_E_lf4_full,eqn:maxwell_H_lf4_full} from Equations~\labelcref{eqn:remainder_p_lf4,eqn:remainder_E_lf4,eqn:remainder_H_lf4},  and then use the error terms in Equations~\labelcref{eqn:p_fullerror_lf4,eqn:E_fullerror_lf4,eqn:H_fullerror_lf4} and thereby obtain:
 \begin{multline*}
\aInnerproduct{\dfrac{e_{p_h}^{n + \frac{1}{2}} - e_{p_h}^{n - \frac{1}{2}}}{\Delta t}}{\widetilde{p}} - \dfrac{1}{2} \aInnerproduct{ \varepsilon \left(e_{E_h}^{n + \frac{1}{2}} + e_{E_h}^{n - \frac{1}{2}} \right)}{\nabla \widetilde{p}} + \dfrac{\Delta t^2}{24} \aInnerproduct{\varepsilon \nabla \nabla \cdot \left( e_{E_h}^{n + \frac{1}{2}} + e_{E_h}^{n - \frac{1}{2}} \right)}{\nabla \widetilde{p}} \\ = \aInnerproduct{R_p^{n} - \varepsilon \nabla \cdot r_{2,E}^n - \dfrac{\Delta t^2}{12} \varepsilon \nabla \cdot \nabla \nabla \cdot r_{1,E}^n }{\widetilde{p}}, 
 \end{multline*} 
\begin{multline*}
 \dfrac{1}{2} \aInnerproduct{\nabla \left(e_{p_h}^{n + \frac{1}{2}} + e_{p_h}^{n - \frac{1}{2}}\right)}{\widetilde{E}}  - \dfrac{\Delta t^2}{24} \aInnerproduct{\nabla \left(e_{p_h}^{n + \frac{1}{2}} + e_{p_h}^{n - \frac{1}{2}} \right)}{\nabla \nabla \cdot \widetilde{E}} + \aInnerproduct{\varepsilon \dfrac{e_{E_h}^{n + \frac{1}{2}} - e_{E_h}^{n - \frac{1}{2}}}{\Delta t}}{\widetilde{E}} \\ - \dfrac{1}{2} \aInnerproduct{e_{H_h}^{n + 1} + e_{H_h}^n}{\nabla \times \widetilde{E}}  - \dfrac{\Delta t^2}{24} \aInnerproduct{ \mu^{-1}\varepsilon^{-1} \nabla \times \nabla \times \left( e_{H_h}^{n + 1} + e_{H_h}^n \right)}{\nabla \times \widetilde{E}} \\ = \aInnerproduct{\varepsilon R_E^n + \nabla r_{2,p}^n - \nabla \times r_{2,H}^{n+\frac{1}{2}} - \dfrac{\Delta t^2}{12} \nabla \nabla \cdot \nabla r_{1,p}^n - \dfrac{\Delta t^2}{12}  \mu^{-1}\varepsilon^{-1} \nabla \times \nabla \times \nabla \times r_{1,H}^{n+\frac{1}{2}}}{\widetilde{E}},
  \end{multline*} 
 \begin{multline*}
\aInnerproduct{\mu \dfrac{e_{H_h}^{n + 1} - e_{H_h}^{n}}{\Delta t}}{\widetilde{H}} +  \dfrac{1}{2}\aInnerproduct{\nabla \times\left(e_{E_h}^{n+\frac{1}{2}} + e_{E_h}^{n - \frac{1}{2}} \right)}{\widetilde{H}} +  \dfrac{\Delta t^2}{24} \aInnerproduct{\varepsilon^{-1}\mu^{-1} \nabla \times \left(e_{E_h}^{n + \frac{1}{2}} + e_{E_h}^{n - \frac{1}{2}} \right)}{\nabla \times  \nabla \times \widetilde{H}} \\ = \aInnerproduct{\mu R_H^{n + \frac{1}{2}} + \nabla \times r_{2,E}^n + \dfrac{\Delta t^2}{12} \varepsilon^{-1}\mu^{-1} \nabla \times \nabla \times \nabla \times r_{1,E}^n}{\widetilde{H}}.
 \end{multline*}
 Next, using the values of the error terms $e_{p_h}^n$, $e_{E_h}^n$ and $e_{H_h}^n$ as in Equations~\labelcref{eqn:p_fullerror_lf4,eqn:E_fullerror_lf4,eqn:H_fullerror_lf4} in the above equations, we get:
 \begin{multline*}
  \aInnerproduct{\dfrac{\left(\eta^{n + \frac{1}{2}} - \eta^{n - \frac{1}{2}}\right) - \left( \eta^{n + \frac{1}{2}}_h - \eta^{n - \frac{1}{2}}_h \right)}{\Delta t}}{\widetilde{p}}  - \dfrac{1}{2} \aInnerproduct{ \varepsilon \left( \left( \zeta^{n + \frac{1}{2}} + \zeta^{n - \frac{1}{2}} \right) - \left(\zeta_h^{n + \frac{1}{2}} + \zeta^{n - \frac{1}{2}}_h \right) \right)}{\nabla \widetilde{p}} \\ + \dfrac{\Delta t^2}{24} \aInnerproduct{\varepsilon \nabla \nabla \cdot \left( \left( \zeta^{n + \frac{1}{2}} + \zeta^{n - \frac{1}{2}} \right) - \left(\zeta_h^{n + \frac{1}{2}} + \zeta^{n - \frac{1}{2}}_h \right) \right)}{\nabla \widetilde{p}} = \aInnerproduct{R_p^{n} - \varepsilon \nabla \cdot r_{2,E}^n - \dfrac{\Delta t^2}{12} \varepsilon \nabla \cdot \nabla \nabla \cdot r_{1,E}^n }{\widetilde{p}},
\end{multline*}
\begin{multline*}
  \dfrac{1}{2} \aInnerproduct{\nabla \left( \left( \eta^{n + \frac{1}{2}} + \eta^{n - \frac{1}{2}} \right) - \left( \eta^{n + \frac{1}{2}}_h + \eta^{n - \frac{1}{2}}_h \right) \right)}{\widetilde{E}} - \dfrac{\Delta t^2}{24} \aInnerproduct{\nabla \left( \left( \eta^{n + \frac{1}{2}} + \eta^{n - \frac{1}{2}} \right) - \left( \eta^{n + \frac{1}{2}}_h + \eta^{n - \frac{1}{2}}_h \right)\right)}{\nabla \nabla \cdot \widetilde{E}} \\ + \aInnerproduct{\varepsilon \dfrac{\left( \zeta^{n + \frac{1}{2}} - \zeta^{n - \frac{1}{2}} \right) - \left(\zeta^{n + \frac{1}{2}}_h - \zeta^{n - \frac{1}{2}}_h \right)}{\Delta t}}{\widetilde{E}} \, -
  \dfrac{1}{2} \aInnerproduct{\left( \left( \xi^{n + 1} + \xi^{n} \right) - \left(\xi^{n + 1}_h - \xi^{n}_h \right) \right)}{\nabla \times \widetilde{E}} \\ - \dfrac{\Delta t^2}{24} \aInnerproduct{ \mu^{-1}\varepsilon^{-1} \nabla \times \nabla \times \left( \left( \xi^{n + 1} + \xi^{n} \right) - \left(\xi^{n + 1}_h - \xi^{n}_h \right) \right)}{\nabla \times \widetilde{E}} \\ = \aInnerproduct{\varepsilon R_E^n + \nabla r_{2,p}^n - \nabla \times r_{2,H}^{n+\frac{1}{2}} - \dfrac{\Delta t^2}{12} \nabla \nabla \cdot \nabla r_{1,p}^n - \dfrac{\Delta t^2}{12}  \mu^{-1}\varepsilon^{-1} \nabla \times \nabla \times \nabla \times r_{1,H}^{n+\frac{1}{2}}}{\widetilde{E}},
\end{multline*}
\begin{multline*}
  \aInnerproduct{\mu \dfrac{\left(\xi^{n + 1} - \xi^{n} \right) - \left( \xi^{n + 1}_h - \xi^{n}_h \right)}{\Delta t}}{\widetilde{H}}  + \dfrac{1}{2} \aInnerproduct{\nabla \times \left( \left( \zeta^{n + \frac{1}{2}} + \zeta^{n - \frac{1}{2}} \right) - \left( \zeta^{n + \frac{1}{2}}_h + \zeta^{n - \frac{1}{2}}_h \right) \right)}{\widetilde{H}} \\ +  \dfrac{\Delta t^2}{24} \aInnerproduct{\varepsilon^{-1}\mu^{-1} \nabla \times \left(\left( \zeta^{n + \frac{1}{2}} + \zeta^{n - \frac{1}{2}} \right) - \left( \zeta^{n + \frac{1}{2}}_h + \zeta^{n - \frac{1}{2}}_h \right) \right)}{\nabla \times  \nabla \times \widetilde{H}} \\ = \aInnerproduct{\mu R_H^{n + \frac{1}{2}} + \nabla \times r_{2,E}^n + \dfrac{\Delta t^2}{12} \varepsilon^{-1}\mu^{-1} \nabla \times \nabla \times \nabla \times r_{1,E}^n}{\widetilde{H}}.
\end{multline*}
Since these equations are true for all $(\widetilde{p}, \widetilde{E}, \widetilde{H}) \in U_h \times V_h \times W_h$, we choose $\widetilde{p} = -2 \Delta t \varepsilon^{-1} \left( \eta_h^{n + \frac{1}{2}} + \eta_h^{n - \frac{1}{2}}\right)$, $\widetilde{E} = -2 \Delta t \left( \zeta_h^{n + \frac{1}{2}} + \zeta_h^{n - \frac{1}{2}} \right)$ and $\widetilde{H} = -2 \Delta t \left( \xi_h^n + \xi_h^{n - 1} \right)$ and using the fact that $\nabla U_h \subseteq V_h$ and $\nabla \times V_h \subseteq W_h$, we obtain:
\begin{multline}
2 \ainnerproduct{\varepsilon^{-1} \left( \eta^{n + \frac{1}{2}}_h - \eta^{n - \frac{1}{2}}_h \right)}{\eta^{n + \frac{1}{2}}_h + \eta^{n - \frac{1}{2}}_h} + 2 \ainnerproduct{\varepsilon \left( \zeta^{n + \frac{1}{2}}_h - \zeta^{n - \frac{1}{2}}_h \right)}{\zeta^{n + \frac{1}{2}}_h + \zeta^{n - \frac{1}{2}}_h} \, + 2 \ainnerproduct{\mu \left( \xi^n_h - \xi^{n - 1}_h \right)}{\xi^n_h + \xi^{n - 1}_h} \\ =
2 \ainnerproduct{\varepsilon^{-1} \left( \eta^{n + \frac{1}{2}} - \eta^{n -  \frac{1}{2}} \right)}{\eta^{n + \frac{1}{2}}_h + \eta^{n - \frac{1}{2}}_h} \, + 2 \ainnerproduct{\varepsilon \left( \zeta^{n + \frac{1}{2}} - \zeta^{n - \frac{1}{2}} \right)}{\zeta^{n + \frac{1}{2}}_h + \zeta^{n - \frac{1}{2}}_h} + 2 \ainnerproduct{\mu \left( \xi^n - \xi^{n - 1} \right)}{\xi^n_h + \xi^{n - 1}_h} \, \\ +
2 \Delta t \ainnerproduct{- R_p^n + \varepsilon \nabla \cdot r_{2,E}^n + \dfrac{\Delta t^2}{12}  \varepsilon \nabla \cdot \nabla \nabla \cdot r_{1,E}^n}{\varepsilon^{-1} \left( \eta^{n + \frac{1}{2}}_h + \eta^{n - \frac{1}{2}}_h \right)} + 2 \Delta t \ainnerproduct{-\varepsilon R_E^n - \nabla r_{2,p}^n + \nabla \times r_{2,H}^{n + \frac{1}{2}} \\ + \dfrac{\Delta t^2}{12} \nabla \nabla \cdot \nabla r_{1,p}^n + \dfrac{\Delta t^2}{12} \mu^{-1} \varepsilon^{-1} \nabla \times \nabla \times \nabla \times r_{1,H}^{n + \frac{1}{2}}}{\zeta^{n + \frac{1}{2}}_h + \zeta^{n - \frac{1}{2}}_h}  + 2 \Delta t \ainnerproduct{-\mu R_H^{n + \frac{1}{2}} - \nabla \times r_{2,E}^n  \\+ \dfrac{\Delta t^2}{12} \mu^{-1} \varepsilon^{-1} \nabla \times \nabla \times \nabla \times r_{1,E}^n}{\xi^n_h + \xi^{n - 1}_h}, \label{eqn:suberror_p+E+H_lf4}
\end{multline}
Consider that  $\varepsilon^{-1} \left(\eta^{n + \frac{1}{2}} - \eta^{n - \frac{1}{2}}\right) = \varepsilon^{-1} \left(I - \Pi_h^0\right) \left( p(t^{n + \frac{1}{2}}) - p(t^{n - \frac{1}{2}})\right)$ by Equation~\eqref{eqn:p_fullerror_sub_lf4}. Using the Taylor theorem with remainder as in Theorem~\ref{thm:dscrt_error_estmt_lf4}, applying the Cauchy-Schwarz, AM-GM, and Triangle inequalities, we have the following resulting inequality:
\begin{align*}
2 \ainnerproduct{\varepsilon^{-1} \left( \eta^{n + \frac{1}{2}} - \eta^{n - \frac{1}{2}} \right)}{\eta^{n + \frac{1}{2}}_h + \eta^{n - \frac{1}{2}}_h} & = 2 \Delta t \ainnerproduct{\varepsilon^{-1} \left( I - \Pi_h^0 \right) \left( \dfrac{\partial p}{\partial t} (t^n) +  \dfrac{\Delta t^2}{24} \dfrac{\partial^3 p}{\partial t^3 (t^n) }\right)}{\eta^{n + \frac{1}{2}}_h + \eta^{n - \frac{1}{2}}_h} \\ & + 2 \Delta t \ainnerproduct{\varepsilon^{-1} \left( I - \Pi_h^0 \right) R_p^n}{\eta^{n + \frac{1}{2}}_h + \eta^{n - \frac{1}{2}}_h} \\ 
 & \le \Delta t \bigg[ \norm[\bigg]{(I - \Pi_h^0) \dfrac{\partial p}{\partial t}(t^n)}^2_{\varepsilon^{-1}} \!\! + \dfrac{\Delta t^4}{(24)^2} \norm[\bigg]{(I - \Pi_h^0) \dfrac{\partial^3 p}{\partial t^3}(t^n)}^2_{\varepsilon^{-1}} \!\! \\ & +  \norm[\bigg]{(I - \Pi_h^0) R_p^n}^2_{\varepsilon^{-1}} \bigg] \! + 6 \Delta t \bigg[ \norm{\eta_h^{n + \frac{1}{2}}}^2_{\varepsilon^{-1}} + \norm{\eta_h^{n - \frac{1}{2}}}^2_{\varepsilon^{-1}} \bigg].
\end{align*}
Similarly, using Equations~\labelcref{eqn:E_fullerror_sub_lf4,eqn:H_fullerror_sub_lf4} for the error terms for $E$ and $H$, we obtain:
\begin{gather*}
\begin{split}
2  \ainnerproduct{\varepsilon \left( \zeta^{n + \frac{1}{2}} - \zeta^{n - \frac{1}{2}} \right)}{\zeta^{n + \frac{1}{2}}_h + \zeta^{n - \frac{1}{2}}_h} \le \Delta t \bigg[ \norm[\bigg]{(I - \Pi_h^1) \dfrac{\partial E}{\partial t}(t^n)}^2_{\varepsilon} \!\! + \dfrac{\Delta t^4}{(24)^2} \norm[\bigg]{(I - \Pi_h^1) \dfrac{\partial^3 E}{\partial t^3}(t^n)}^2_{\varepsilon} \!\! \\  +  \norm[\bigg]{(I - \Pi_h^1) R_E^n}^2_{\varepsilon} \bigg] \! + 6 \Delta t \bigg[ \norm{\zeta_h^{n + \frac{1}{2}}}^2_{\varepsilon} + \norm{\zeta_h^{n - \frac{1}{2}}}^2_{\varepsilon} \bigg],
\end{split} \\ 
\begin{split}
2 \ainnerproduct{\mu \left( \xi^{n + 1} - \xi^n \right)}{\xi^{n + 1}_h + \xi^n_h} \le \Delta t \bigg[ \norm[\bigg]{(I - \Pi_h^2) \dfrac{\partial H}{\partial t}(t^{n + \frac{1}{2}})}^2_{\mu} \!\! + \dfrac{\Delta t^4}{(24)^2} \norm[\bigg]{(I - \Pi_h^2) \dfrac{\partial^3 H}{\partial t^3}(t^{n + \frac{1}{2}})}^2_{\mu} \!\! \\  +  \norm[\bigg]{(I - \Pi_h^2) R_H^{n + \frac{1}{2}}}^2_{\mu} \bigg] \! + 6 \Delta t \bigg[ \norm{\xi_h^{n + 1}}^2_{\mu} + \norm{\xi_h^n}^2_{\mu} \bigg].
\end{split}
\end{gather*}
Using these inequalities for $\eta$, $\zeta$ and $\xi$ in Equation~\eqref{eqn:suberror_p+E+H_lf4}, we thus obtain the following estimate:
\begin{multline*}
2 \bigg[ \norm{\eta_h^{n + \frac{1}{2}}}^2_{\varepsilon^{-1}} - \norm{\eta_h^{n - \frac{1}{2}}}^2_{\varepsilon^{-1}} + \norm{\zeta_h^{n + \frac{1}{2}}}^2_{\varepsilon} - \norm{\zeta_h^{n - \frac{1}{2}}}^2_{\varepsilon} + \norm{\xi_h^{n+1}}^2_{\mu} - \norm{\xi_h^n}^2_{\mu} \bigg] \le \\
\Delta t \bigg[ \norm[\bigg]{(I - \Pi_h^0) \dfrac{\partial p}{\partial t}(t^n)}^2_{\varepsilon^{-1}} + \dfrac{\Delta t^4}{(24)^2} \norm[\bigg]{(I - \Pi_h^0) \dfrac{\partial^3 p}{\partial t^3}(t^n)}^2_{\varepsilon^{-1}} + \norm[\bigg]{(I - \Pi_h^0) R_p^n}^2_{\varepsilon^{-1}} \\
+ \norm[\bigg]{(I - \Pi_h^1) \dfrac{\partial E}{\partial t}(t^n)}^2_{\varepsilon} + \dfrac{\Delta t^4}{(24)^2} \norm[\bigg]{(I - \Pi_h^1) \dfrac{\partial^3 E}{\partial t^3}(t^n)}^2_{\varepsilon} + \norm[\bigg]{(I - \Pi_h^1) R_E^n}^2_{\varepsilon} + \\
  \norm[\bigg]{(I - \Pi_h^2) \dfrac{\partial H}{\partial t}(t^{n + \frac{1}{2}})}^2_\mu + \dfrac{\Delta t^4}{(24)^2} \norm[\bigg]{(I - \Pi_h^2) \dfrac{\partial^3 H}{\partial t^3}(t^{n + \frac{1}{2}})}^2_{\mu} + \norm[\bigg]{(I - \Pi_h^2) R_H^{n + \frac{1}{2}}}^2_\mu  \\ + 8 \big( \norm{\eta_h^n}^2_{\varepsilon^{-1}} + \norm{\eta_h^{n - 1}}^2_{\varepsilon^{-1}} + \norm{\zeta_h^n}^2_{\varepsilon} + \norm{\zeta_h^{n - 1}}^2_{\varepsilon} + \norm{\xi_h^n}^2_\mu + \norm{\xi_h^{n - 1}}^2_\mu \big)  + \norm{R_p^n}^2_{\varepsilon^{-1}} + \norm{R_E^n}^2_{\varepsilon} \\ + \norm{R_H^{n + \frac{1}{2}}}^2_{\mu} + \norm{\nabla r_{1,p}^n} ^2_{\varepsilon^{-1}} + \norm{\nabla \cdot r_{1,E}^n}^2_{\varepsilon} + \varepsilon^{-1} \mu^{-1} \norm{\nabla \times r_{1,E}^n}^2_{\varepsilon} +  \varepsilon^{-1} \mu^{-1} \norm{\nabla \times r_{1,H}^{n + \frac{1}{2}}}^2_{\mu} \\
+ \dfrac{\Delta t^4}{144} \left( \norm{\nabla \nabla \cdot \nabla r_{2,p}^n} ^2_{\varepsilon^{-1}} + \norm{\nabla \cdot \nabla \nabla \cdot r_{2,E}^n}^2_{\varepsilon} + \varepsilon^{-3} \mu^{-3} \norm{\nabla \times \nabla \times \nabla \times r_{2,E}^n}^2_{\varepsilon}  \right. \\ \left. +  \varepsilon^{-3} \mu^{-3} \norm{\nabla \times \nabla \times \nabla \times r_{2,H}^{n + \frac{1}{2}}}^2_{\mu} \right)  \bigg].
\end{multline*}
Summing from $n = 1$ to $N-1$, we get:
\begin{multline*}
  \norm{\eta_h^{N - \frac{1}{2}}}^2_{\varepsilon^{-1}} + \norm{\zeta_h^{N - \frac{1}{2}}}^2_{\varepsilon} + \norm{\xi_h^N}^2_{\mu} \le 8 \Delta t \sum\limits_{n = 1}^{N-1} \bigg[ \norm{\eta_h^{n + \frac{1}{2}}}^2_{\varepsilon^{-1}} +\norm{\zeta_h^{n + \frac{1}{2}}}^2_{\varepsilon} + \norm{\xi_h^n}^2_{\mu} \bigg] \\ + \Delta t \sum\limits_{n = 0}^{N} \bigg[ \norm[\bigg]{(I - \Pi_h^0) \dfrac{\partial p}{\partial t}(t^n)}^2_{\varepsilon^{-1}} + \dfrac{\Delta t^4}{(24)^2} \norm[\bigg]{(I - \Pi_h^0) \dfrac{\partial^3 p}{\partial t^3}(t^n)}^2_{\varepsilon^{-1}} + 
  \norm[\bigg]{(I - \Pi_h^0) R_p^n}^2_{\varepsilon^{-1}} \\ + \norm[\bigg]{(I - \Pi_h^1) \dfrac{\partial E}{\partial t}(t^{n - \frac{1}{2}})}^2_{\varepsilon}  + \dfrac{\Delta t^4}{(24)^2} \norm[\bigg]{(I - \Pi_h^1) \dfrac{\partial^3 E}{\partial t^3}(t^n)}^2_{\varepsilon} + \norm[\bigg]{(I - \Pi_h^1) R_E^n}^2_{\varepsilon} + \norm[\bigg]{(I - \Pi_h^2) \dfrac{\partial H}{\partial t}(t^{n + \frac{1}{2}})}^2_\mu \\ + \dfrac{\Delta t^4}{(24)^2} \norm[\bigg]{(I - \Pi_h^2) \dfrac{\partial^3 H}{\partial t^3}(t^{n + \frac{1}{2}})}^2_{\mu}  + \norm[\bigg]{(I - \Pi_h^2) R_H^n}^2_\mu  + \norm{R_p^n}^2_{\varepsilon^{-1}} + \norm{R_E^n}^2_{\varepsilon} + \norm{R_H^{n + \frac{1}{2}}}^2_{\mu} + \norm{\nabla r_{1,p}^n} ^2_{\varepsilon^{-1}} \\ + \norm{\nabla \cdot r_{1,E}^n}^2_{\varepsilon} + \varepsilon^{-1} \mu^{-1} \norm{\nabla \times r_{1,E}^n}^2_{\varepsilon} +  \varepsilon^{-1} \mu^{-1} \norm{\nabla \times r_{1,H}^{n + \frac{1}{2}}}^2_{\mu} 
+  \dfrac{\Delta t^4}{144} \left( \norm{\nabla \nabla \cdot \nabla r_{2,p}^n} ^2_{\varepsilon^{-1}} + \norm{\nabla \cdot \nabla \nabla \cdot r_{2,E}^n}^2_{\varepsilon} \right. \\ 
\left. + \varepsilon^{-3} \mu^{-3} \norm{\nabla \times \nabla \times \nabla \times r_{2,E}^n}^2_{\varepsilon} +  \varepsilon^{-3} \mu^{-3} \norm{\nabla \times \nabla \times \nabla \times r_{2,H}^{n + \frac{1}{2}}}^2_{\mu} \right)  \bigg] + \bigg[ \norm{\eta_h^\frac{1}{2}}^2_{\varepsilon^{-1}} + \norm{\zeta_h^\frac{1}{2}}^2_{\varepsilon} + \norm{\xi_h^1}^2_{\mu} \bigg].
\end{multline*}
Likewise, for the semidiscrete approximation of the initial system as in Equations~\labelcref{eqn:maxwell_p0_lf4,eqn:maxwell_E0_lf4,eqn:maxwell_H0_lf4}, we obtain for their errors the following system of equations:
 \begin{multline*}
  \aInnerproduct{\dfrac{e_{p_h}^{\frac{1}{2}} - e_{p_h}^0}{\Delta t/2}}{\widetilde{p}} - \dfrac{1}{4} \aInnerproduct{ \varepsilon \left(e_{E_h}^{\frac{1}{2}} + e_{E_h}^0 \right)}{\nabla \widetilde{p}} + \dfrac{\Delta t^2}{192} \aInnerproduct{\varepsilon \nabla \nabla \cdot \left( e_{E_h}^{\frac{1}{2}} + e_{E_h}^0 \right)}{\nabla \widetilde{p}} \\ = \aInnerproduct{\dfrac{1}{2} R_p^0 -   \dfrac{\varepsilon}{2} \nabla \cdot r_{2,E}^0 - \dfrac{\Delta t^2}{96} \varepsilon \nabla \cdot \nabla \nabla \cdot r_{1,E}^0}{\widetilde{p}}, 
  \end{multline*} 
 \begin{multline*}
  \dfrac{1}{4} \aInnerproduct{\nabla \left(e_p^{\frac{1}{2}} + e_p^0 \right)}{\widetilde{E}} - \dfrac{\Delta t^2}{192} \aInnerproduct{ \nabla \left( e_{p_h}^{\frac{1}{2}} +  e_{p_h}^0 \right)}{\nabla \nabla \cdot \widetilde{E}} + \aInnerproduct{\varepsilon \dfrac{e_{E_h}^{\frac{1}{2}} - e_{E_h}^0}{\Delta t/2}}{\widetilde{E}}  - \dfrac{1}{2} \aInnerproduct{e_{H_h}^1 + e_{H_h}^0}{\nabla \times \widetilde{E}} \\ - \dfrac{\Delta t^2}{96}  \aInnerproduct{\mu^{-1}\varepsilon^{-1}  \nabla \times \nabla \times \left( e_{H_h}^1 + e_{H_h}^0 \right)}{\widetilde{E}} \\ = \aInnerproduct{\dfrac{\varepsilon}{2} R_E^0 + \dfrac{1}{2} \nabla r_{2,p}^0 - \nabla \times r_{2,H}^{\frac{1}{2}} - \dfrac{\Delta t^2}{96} \nabla \nabla \cdot \nabla r_{1,p}^0 - \dfrac{\Delta t^2}{48}  \mu^{-1}\varepsilon^{-1} \nabla \times \nabla \times \nabla \times r_{1,H}^{\frac{1}{2}}}{\widetilde{E}}, 
\end{multline*} 
 \begin{multline*}
 \aInnerproduct{\mu \dfrac{e_{H_h}^1 - e_{H_h}^0}{\Delta t}}{\widetilde{H}} + \dfrac{1}{4} \aInnerproduct{\nabla \times \left( e_{E_h}^{\frac{1}{2}} + e_{E_h}^0 \right)}{\widetilde{H}} + \dfrac{\Delta t^2}{192} \aInnerproduct{ \varepsilon^{-1}\mu^{-1} \nabla \times \left(e_{E_h}^{\frac{1}{2}} + e_{E_h}^0 \right)}{ \nabla \times \nabla \times  \widetilde{H}}\\  = \aInnerproduct{\mu R_H^{\frac{1}{2}}+ \dfrac{1}{2} \nabla \times r_{2,E}^0 + \dfrac{\Delta t^2}{96} \varepsilon^{-1}\mu^{-1} \nabla \times \nabla \times \nabla \times r_{1,E}^0}{\widetilde{H}},
\end{multline*}
Then, following the same sequence of steps for the initial time step and furthermore adding the resulting equation to the inequality for the general case, we arrive at the following:
\begin{multline*}
  \norm{\eta_h^{N - \frac{1}{2}}}^2_{\varepsilon^{-1}} + \norm{\zeta_h^{N - \frac{1}{2}}}^2_{\varepsilon} + \norm{\xi_h^N}^2_{\mu} \le 
  \dfrac{1 + 4 \Delta t}{1 - 4 \Delta t} \Bigg[ \norm{\eta_h^0}^2_{\varepsilon^{-1}} + \norm{\zeta_h^0}^2_{\varepsilon} + \norm{\xi_h^0}^2_{\mu} \Bigg] + \dfrac{8 \Delta t}{1 - 4\Delta t} \sum\limits_{n = 0}^{N - 1} \Bigg[ \norm{\eta_h^{n + \frac{1}{2}}}^2_{\varepsilon^{-1}} \\ + \norm{\zeta_h^{n + \frac{1}{2}}}^2_{\varepsilon} +  \norm{\xi_h^{n + 1}}^2_{\mu} \Bigg] + \dfrac{ \Delta t}{1 - 4 \Delta t} \sum\limits_{n = 0}^{N - 1} \Bigg[ \norm[\bigg]{(I - \Pi_h^0) \dfrac{\partial p}{\partial t}(t^n)}^2_{\varepsilon^{-1}} + \dfrac{\Delta t^4}{(24)^2} \norm[\bigg]{(I - \Pi_h^0) \dfrac{\partial^3 p}{\partial t^3}(t^n)}^2_{\varepsilon^{-1}} \\ + \norm[\bigg]{(I - \Pi_h^0) R_p^n}^2_{\varepsilon^{-1}} + \norm[\bigg]{(I - \Pi_h^1) \dfrac{\partial E}{\partial t}(t^n)}^2_{\varepsilon} + \dfrac{\Delta t^4}{(24)^2} \norm[\bigg]{(I - \Pi_h^1) \dfrac{\partial^3 E}{\partial t^3}(t^n)}^2_{\varepsilon} + \norm[\bigg]{(I - \Pi_h^1) R_E^n}^2_{\varepsilon} \\ + \norm[\bigg]{(I - \Pi_h^2) \dfrac{\partial H}{\partial t}(t^{n + \frac{1}{2}})}^2_\mu + \dfrac{\Delta t^4}{(24)^2} \norm[\bigg]{(I - \Pi_h^2) \dfrac{\partial^3 H}{\partial t^3}(t^{n + \frac{1}{2}})}^2_{\mu} + \norm[\bigg]{(I - \Pi_h^2) R_H^{n + \frac{1}{2}}}^2_{\mu} + \norm{R_p^n}^2_{\varepsilon^{-1}} + \norm{R_E^n}^2_{\varepsilon} \\ + \norm{R_H^{n + \frac{1}{2}}}^2_{\mu} + \norm{\nabla r_{1,p}^n} ^2_{\varepsilon^{-1}} + \norm{\nabla \cdot r_{1,E}^n}^2_{\varepsilon} + \varepsilon^{-1} \mu^{-1} \norm{\nabla \times r_{1,E}^n}^2_{\varepsilon} +  \varepsilon^{-1} \mu^{-1} \norm{\nabla \times r_{1,H}^{n + \frac{1}{2}}}^2_{\mu} + \dfrac{\Delta t^4}{144} \left( \norm{\nabla \nabla \cdot \nabla r_{2,p}^n} ^2_{\varepsilon^{-1}} \right. \\ \left. + \norm{\nabla \cdot \nabla \nabla \cdot r_{2,E}^n}^2_{\varepsilon} + \varepsilon^{-3} \mu^{-3} \norm{\nabla \times \nabla \times \nabla \times r_{2,E}^n}^2_{\varepsilon} +  \varepsilon^{-3} \mu^{-3} \norm{\nabla \times \nabla \times \nabla \times r_{2,H}^{n + \frac{1}{2}}}^2_{\mu} \right)  \Bigg].
\end{multline*}
Applying the discrete Gronwall inequality with $\Delta t < 1/24$, we get that:
\begin{multline*}
  \norm{\eta_h^{N - \frac{1}{2}}}^2_{\varepsilon^{-1}} + \norm{\zeta_h^{N - \frac{1}{2}}}^2_{\varepsilon} + \norm{\xi_h^N}^2_{\mu} \le \Bigg[ \dfrac{6 \Delta t}{5} \sum\limits_{n = 0}^{N - 1} \Bigg( \norm[\bigg]{(I - \Pi_h^0) \dfrac{\partial p}{\partial t}(t^n)}^2_{\varepsilon^{-1}} + \norm[\bigg]{(I - \Pi_h^0) \dfrac{\partial^3 p}{\partial t^3}(t^n)}^2_{\varepsilon^{-1}} \\ + \norm[\bigg]{(I - \Pi_h^0) R_p^n}^2_{\varepsilon^{-1}}
 + \norm[\bigg]{(I - \Pi_h^1) \dfrac{\partial E}{\partial t}(t^n)}^2_{\varepsilon} + \norm[\bigg]{(I - \Pi_h^1) \dfrac{\partial^3 E}{\partial t^3}(t^n)}^2_{\varepsilon} + \norm[\bigg]{(I - \Pi_h^1) R_E^n}^2_{\varepsilon} + \norm[\bigg]{(I - \Pi_h^2) \dfrac{\partial H}{\partial t}(t^{n + \frac{1}{2}})}^2_\mu \\ + \norm[\bigg]{(I - \Pi_h^2) \dfrac{\partial^3 H}{\partial t^3}(t^{n + \frac{1}{2}})}^2_{\mu} + \norm[\bigg]{(I - \Pi_h^2) R_H^{n + \frac{1}{2}}}^2_{\mu} + \norm{R_p^n}^2_{\varepsilon^{-1}} + \norm{R_E^n}^2_{\varepsilon} + \norm{R_H^{n + \frac{1}{2}}}^2_{\mu} + \norm{\nabla r_{1,p}^n} ^2_{\varepsilon^{-1}} + \norm{\nabla \cdot r_{1,E}^n}^2_{\varepsilon} \\ + \varepsilon^{-1} \mu^{-1} \norm{\nabla \times r_{1,E}^n}^2_{\varepsilon} +  \varepsilon^{-1} \mu^{-1} \norm{\nabla \times r_{1,H}^{n + \frac{1}{2}}}^2_{\mu} + \dfrac{\Delta t^4}{144} \left( \norm{\nabla \nabla \cdot \nabla r_{2,p}^n} ^2_{\varepsilon^{-1}} + \norm{\nabla \cdot \nabla \nabla \cdot r_{2,E}^n}^2_{\varepsilon} \right. \\ \left. + \varepsilon^{-3} \mu^{-3} \norm{\nabla \times \nabla \times \nabla \times r_{2,E}^n}^2_{\varepsilon} +  \varepsilon^{-3} \mu^{-3} \norm{\nabla \times \nabla \times \nabla \times r_{2,H}^{n + \frac{1}{2}}}^2_{\mu} \right) \Bigg) \\ + \dfrac{7}{5} \Big( \norm{\eta_h^0}^2_{\varepsilon^{-1}} + \norm{\zeta_h^0}^2_{\varepsilon} + \norm{\xi_h^0}^2_{\mu} \Big) \Bigg] \exp \left( 16 T \right).
\end{multline*}
Using our estimates for the Taylor remainders from Theorem~\labelcref{thm:dscrt_error_estmt_lf4} for these terms on the right hand side of the above inequality, we further get that:
\[
  \Delta t \sum\limits_{n = 0}^N \Big[ \norm{R^n_p}^2_{\varepsilon^{-1}} + \norm{R^n_E}^2_{\varepsilon} + \norm{R^n_H}^2_\mu \Big] \le \Delta t^8 \Bigg[ \norm[\bigg]{\dfrac{\partial^5 p}{\partial t^5}}^2_{L^2(0, T; L^2_{\varepsilon^{-1}}(\Omega))} \!\! + \, \norm[\bigg]{\dfrac{\partial^5 E}{\partial t^5}}^2_{L^2(0, T; L^2_\varepsilon(\Omega))} \!\! + \, \norm[\bigg]{\dfrac{\partial^5 H}{\partial t^5}}^2_{L^2(0, T; L^2_\mu(\Omega))} \Bigg],
\]
and that:
\begin{multline*}
  \Delta t \sum\limits_{n = 0}^N \Big[\norm{\nabla r_{2,p}^n} ^2_{\varepsilon^{-1}} + \norm{\nabla \cdot r_{2,E}^n}^2_{\varepsilon} + \varepsilon^{-1} \mu^{-1} \norm{\nabla \times r_{2,E}^n}^2_{\varepsilon} +  \varepsilon^{-1} \mu^{-1} \norm{\nabla \times r_{2,H}^{n + \frac{1}{2}}}^2_{\mu} + \dfrac{\Delta t^4}{144} \left( \norm{\nabla \nabla \cdot \nabla r_{1,p}^n} ^2_{\varepsilon^{-1}} \right. \\ \left. + \norm{\nabla \cdot \nabla \nabla \cdot r_{1,E}^n}^2_{\varepsilon} + \varepsilon^{-3} \mu^{-3} \norm{\nabla \times \nabla \times \nabla \times r_{1,E}^n}^2_{\varepsilon} +  \varepsilon^{-3} \mu^{-3} \norm{\nabla \times \nabla \times \nabla \times r_{1,H}^{n + \frac{1}{2}}}^2_{\mu} \right) \Big] \\ \le \Delta t^8 \Bigg[ \norm[\bigg]{\dfrac{\partial^4 \left(\nabla p \right)}{\partial t^4}}^2_{L^2(0, T; L^2_{\varepsilon^{-1}}(\Omega))} \!\! + 
  \norm[\bigg]{\dfrac{\partial^4 (\nabla \cdot E)}{\partial t^4}}^2_{L^2(0, T; L^2_\varepsilon(\Omega))} \!\! + \varepsilon^{-1} \mu^{-1} \norm[\bigg]{\dfrac{\partial^4 (\nabla \times E)}{\partial t^4}}^2_{L^2(0, T; L^2_{\varepsilon}(\Omega))} \!\! \\ + \varepsilon^{-1} \mu^{-1} \norm[\bigg]{\dfrac{\partial^4 (\nabla \times H)}{\partial t^4}}^2_{L^2(0, T; L^2_\mu(\Omega))} + \norm[\bigg]{\dfrac{\partial^2 \left(\nabla \nabla \cdot \nabla p \right)}{\partial t^2}}^2_{L^2(0, T; L^2_{\varepsilon^{-1}}(\Omega))} \!\! + 
  \norm[\bigg]{\dfrac{\partial^2 (\nabla \cdot \nabla \nabla \cdot E)}{\partial t^2}}^2_{L^2(0, T; L^2_\varepsilon(\Omega))} \!\! \\ + \varepsilon^{-3} \mu^{-3} \norm[\bigg]{\dfrac{\partial^2 (\nabla \times \nabla \times \nabla \times E)}{\partial t^2}}^2_{L^2(0, T; L^2_{\varepsilon}(\Omega))} \!\! + \varepsilon^{-3} \mu^{-3} \norm[\bigg]{\dfrac{\partial^2 (\nabla \times \nabla \times \nabla \times H)}{\partial t^2}}^2_{L^2(0, T; L^2_\mu(\Omega))} \Bigg].
\end{multline*}
Now, for $p \in \mathring{H}_{\varepsilon^{-1}}^1(\Omega)$, $E \in \mathring{H}_\varepsilon(\curl; \Omega)$ and $H \in \mathring{H}_\mu(\divgn; \Omega)$, there exists positive bounded constants $C_{1, p}$, $C_{2, p}$, $C_{3, p}$, $C_{1, E}$, $C_{2, E}$, $C_{3, E}$, $C_{1, H}$, $C_{2, H}$, and $C_{3, H}$ such that we have the following error bounds for the $L^2$ projections:
\begin{gather*}
\begin{alignat*}{3}
  \norm[\bigg]{(I - \Pi_h^0) \dfrac{\partial p}{\partial t}(t^n)}_{\varepsilon^{-1}} &\le C_{1, p} h^r \norm[\bigg]{\dfrac{\partial p}{\partial t}(t^n)}_{\varepsilon^{-1}}, && \qquad \norm[\bigg]{(I - \Pi_h^0) \dfrac{\partial^3 p}{\partial t^3}(t^n)}_{\varepsilon^{-1}} &&\le C_{2, p} h^r \norm[\bigg]{\dfrac{\partial^3 p}{\partial t^3}(t^n)}_{\varepsilon^{-1}},  \\
  \norm[\bigg]{(I - \Pi_h^1) \dfrac{\partial E}{\partial t}(t^n)}_{\varepsilon} & \le C_{1, E} h^r \norm[\bigg]{\dfrac{\partial E}{\partial t}(t^n)}_{\varepsilon}, && \qquad \norm[\bigg]{(I - \Pi_h^1) \dfrac{\partial^3 E}{\partial t^3}(t^n)}_{\varepsilon} &&\le C_{2, E} h^r \norm[\bigg]{\dfrac{\partial^3 E}{\partial t^3}(t^n)}_{\varepsilon} \\
  \norm[\bigg]{(I - \Pi_h^2) \dfrac{\partial H}{\partial t}(t^{n + \frac{1}{2}})}_{\mu} &\le C_{1, H} h^r \norm[\bigg]{\dfrac{\partial H}{\partial t}(t^{n + \frac{1}{2}})}_{\mu}, && \qquad \norm[\bigg]{(I - \Pi_h^2) \dfrac{\partial^3 H}{\partial t^3}(t^{n + \frac{1}{2}})}_{\mu} && \le C_{2, H} h^r \norm[\bigg]{\dfrac{\partial^3 H}{\partial t^3}(t^{n + \frac{1}{2}})}_{\mu},
\end{alignat*} \\
 \norm[\bigg]{(I - \Pi_h^0) R_p^n}_{\varepsilon^{-1}} \le C_{3, p} h^r \norm[\bigg]{R_p^n}_{\varepsilon^{-1}}, \norm[\bigg]{(I - \Pi_h^1) R_E^n}_{\varepsilon} \le C_{3, E} h^r \norm[\bigg]{R_E^n}_{\varepsilon}, \norm[\bigg]{(I - \Pi_h^2) R_H^{n + \frac{1}{2}}}_{\mu} \le C_{3, H} h^r \norm[\bigg]{R_H^{n + \frac{1}{2}}}_{\mu}.
\end{gather*}
Set $C_0 \coloneq \max \{C_{1, p}, C_{2, p}, C_{3, p}, C_{1, E}, C_{2, E}, C_{3, E}, C_{1, H}, C_{2, H}, C_{3, H} \}$. We therefore have that:
 \begin{multline*}
    \Delta t \sum\limits_{n = 0}^{N-1} \left[ \norm[\bigg]{(I - \Pi_h^0) \dfrac{\partial p}{\partial t}(t^n)}^2_{\varepsilon^{-1}} \!\! + \norm[\bigg]{(I - \Pi_h^1) \dfrac{\partial E}{\partial t}(t^n)}^2_{\varepsilon} \!\! + \norm[\bigg]{(I - \Pi_h^2) \dfrac{\partial H}{\partial t}(t^{n + \frac{1}{2}})}^2_\mu \right] \\
\le C_0 h^{2 r} \sum\limits_{n = 0}^{N-1} \Delta t \left[ \norm[\bigg]{\dfrac{\partial p}{\partial t}(t^{n - \frac{1}{2}})}^2_{\varepsilon^{-1}} \!\! + \norm[\bigg]{\dfrac{\partial E}{\partial t}(t^{n - \frac{1}{2}})}^2_{\varepsilon} \!\! + \norm[\bigg]{\dfrac{\partial H}{\partial t}(t^{n - \frac{1}{2}})}^2_{\mu} \right] \\
\le C_0 h^{2 r} \int\limits_0^T \left[ \norm[\bigg]{\dfrac{\partial p}{\partial t}(t^{n - \frac{1}{2}})}^2_{\varepsilon^{-1}} \!\! + \norm[\bigg]{\dfrac{\partial E}{\partial t}(t^{n - \frac{1}{2}})}^2_{\varepsilon} \!\! + \norm[\bigg]{\dfrac{\partial H}{\partial t}(t^{n - \frac{1}{2}})}^2_{\mu} \right] dt \\
= C_0 h^{2 r} \left[ \norm[\bigg]{\dfrac{\partial p}{\partial t}}^2_{L^2(0, T; L^2_{\varepsilon^{-1}}(\Omega))} \!\! + \norm[\bigg]{\dfrac{\partial E}{\partial t}}^2_{L^2(0, T; L^2_{\varepsilon}(\Omega))} \!\! + \norm[\bigg]{\dfrac{\partial H}{\partial t}}^2_{L^2(0, T; L^2_{\mu}(\Omega))} \right],
\end{multline*}
 \begin{multline*}
    \Delta t \sum\limits_{n = 0}^{N-1} \left[ \norm[\bigg]{(I - \Pi_h^0) \dfrac{\partial^3 p}{\partial t^3}(t^n)}^2_{\varepsilon^{-1}} \!\! + \norm[\bigg]{(I - \Pi_h^1) \dfrac{\partial^3 E}{\partial t^3}(t^n)}^2_{\varepsilon} \!\! + \norm[\bigg]{(I - \Pi_h^2) \dfrac{\partial^3 H}{\partial t^3}(t^{n + \frac{1}{2}})}^2_\mu \right] \\
\le C_0 h^{2 r} \sum\limits_{n = 0}^{N-1} \Delta t \left[ \norm[\bigg]{\dfrac{\partial^3 p}{\partial t^3}(t^n)}^2_{\varepsilon^{-1}} \!\! + \norm[\bigg]{\dfrac{\partial^3 E}{\partial t^3}(t^n)}^2_{\varepsilon} \!\! + \norm[\bigg]{\dfrac{\partial^3 H}{\partial t^3}(t^{n + \frac{1}{2}})}^2_{\mu} \right] \\
\le C_0 h^{2 r} \int\limits_0^T \left[ \norm[\bigg]{\dfrac{\partial^3 p}{\partial t^3}(t^n)}^2_{\varepsilon^{-1}} \!\! + \norm[\bigg]{\dfrac{\partial^3 E}{\partial t^3}(t^n)}^2_{\varepsilon} \!\! + \norm[\bigg]{\dfrac{\partial^3 H}{\partial t^3}(t^{n + \frac{1}{2}})}^2_{\mu} \right] dt \\
= C_0 h^{2 r} \left[ \norm[\bigg]{\dfrac{\partial^3 p}{\partial t^3}}^2_{L^2(0, T; L^2_{\varepsilon^{-1}}(\Omega))} \!\! + \norm[\bigg]{\dfrac{\partial^3 E}{\partial t^3}}^2_{L^2(0, T; L^2_{\varepsilon}(\Omega))} \!\! + \norm[\bigg]{\dfrac{\partial^3 H}{\partial t^3}}^2_{L^2(0, T; L^2_{\mu}(\Omega))} \right],
\end{multline*}
and likewise for the $L^2$ projection of the remainder terms:
\begin{multline*}
  \Delta t \sum\limits_{n = 0}^N \left[ \norm[\bigg]{(I - \Pi_h^0) R_p^n}_{\varepsilon^{-1}} \!\! + \norm[\bigg]{(I - \Pi_h^1) R_E^n}_{\varepsilon} \!\! + \norm[\bigg]{(I - \Pi_h^2) R_H^{n + \frac{1}{2}}}_{\mu} \right], \\
  \le C_0 h^{2 r} \sum\limits_{n = 0}^N  \Delta t \left[ \norm{R_p^n}_{L^2_{\varepsilon^{-1}}(\Omega)} + \norm{R_E^n}_{L^2_{\varepsilon}(\Omega)} + \norm{R_H^{n + \frac{1}{2}}}_{L^2_{\mu}(\Omega)} \right], \\
  \le C_0 h^{2 r} \Delta t^8 \left[ \norm[\bigg]{\dfrac{\partial^5 p}{\partial t^5}}^2_{L^2(0, T; L^2_{\varepsilon^{-1}}(\Omega))} \!\! + \norm[\bigg]{\dfrac{\partial^5 E}{\partial t^5}}^2_{L^2(0, T; L^2_{\varepsilon}(\Omega))} \!\! + \norm[\bigg]{\dfrac{\partial^5 H}{\partial t^5}}^2_{L^2(0, T; L^2_{\mu}(\Omega))} \right].
\end{multline*}
Now, we take $p_h^0 = \Pi_h^0 p_0$, $E_h^0 = \Pi_h^1 E_0$, and $H_h^0 = \Pi_h^2 H_0$, and note that there exists positive bounded constants $M_1, \, M_2, \, M_3$ and $M_4$ such that:
\begin{align*}
  \norm[\bigg]{\dfrac{\partial p}{\partial t}}^2_{L^2(0, T; L^2_{\varepsilon^{-1}}(\Omega))} \!\! + \norm[\bigg]{\dfrac{\partial E}{\partial t}}^2_{L^2(0, T; L^2_{\varepsilon}(\Omega))} \!\! + \norm[\bigg]{\dfrac{\partial H}{\partial t}}^2_{L^2(0, T; L^2_{\mu}(\Omega))} &\le M_1, \\
  \norm[\bigg]{\dfrac{\partial^3 p}{\partial t^3}}^2_{L^2(0, T; L^2_{\varepsilon^{-1}}(\Omega))} \!\! + \norm[\bigg]{\dfrac{\partial^3 E}{\partial t^3}}^2_{L^2(0, T; L^2_{\varepsilon}(\Omega))} \!\! + \norm[\bigg]{\dfrac{\partial^3 H}{\partial t^3}}^2_{L^2(0, T; L^2_{\mu}(\Omega))} &\le M_2, \\
  \norm[\bigg]{\dfrac{\partial^5 p}{\partial t^5}}^2_{L^2(0, T; L^2_{\varepsilon^{-1}}(\Omega))} \!\! + \norm[\bigg]{\dfrac{\partial^5 E}{\partial t^5}}^2_{L^2(0, T; L^2_{\varepsilon}(\Omega))} \!\! + \norm[\bigg]{\dfrac{\partial^5 H}{\partial t^5}}^2_{L^2(0, T; L^2_{\mu}(\Omega))} &\le M_3,
\end{align*}
\vspace{-1em} \begin{multline*}
 \norm[\bigg]{\dfrac{\partial^4 \left(\nabla p \right)}{\partial t^4}}^2_{L^2(0, T; L^2_{\varepsilon^{-1}}(\Omega))} \!\! + \norm[\bigg]{\dfrac{\partial^4 (\nabla \cdot E)}{\partial t^4}}^2_{L^2(0, T; L^2_\varepsilon(\Omega))} \!\! + \varepsilon^{-1} \mu^{-1} \norm[\bigg]{\dfrac{\partial^4 (\nabla \times E)}{\partial t^4}}^2_{L^2(0, T; L^2_{\varepsilon}(\Omega))} \!\! \\ + \varepsilon^{-1} \mu^{-1} \norm[\bigg]{\dfrac{\partial^4 (\nabla \times H)}{\partial t^4}}^2_{L^2(0, T; L^2_\mu(\Omega))} + \norm[\bigg]{\dfrac{\partial^2 \left(\nabla \nabla \cdot \nabla p \right)}{\partial t^2}}^2_{L^2(0, T; L^2_{\varepsilon^{-1}}(\Omega))} \!\! + 
  \norm[\bigg]{\dfrac{\partial^2 (\nabla \cdot \nabla \nabla \cdot E)}{\partial t^2}}^2_{L^2(0, T; L^2_\varepsilon(\Omega))} \!\! \\ + \varepsilon^{-1} \mu^{-1} \norm[\bigg]{\dfrac{\partial^2 (\nabla \times \nabla \times \nabla \times E)}{\partial t^2}}^2_{L^2(0, T; L^2_{\varepsilon}(\Omega))} \!\! + \varepsilon^{-1} \mu^{-1} \norm[\bigg]{\dfrac{\partial^2 (\nabla \times \nabla \times \nabla \times H)}{\partial t^2}}^2_{L^2(0, T; L^2_\mu(\Omega))} \leq M_4.
\end{multline*}
Consequently, we have the following estimate:
\[
  \norm{\eta_h^{N-\frac{1}{2}}}^2_{\varepsilon^{-1}} + \norm{\zeta_h^{N-\frac{1}{2}}}^2_{\varepsilon} + \norm{\xi_h^N}^2_{\mu} \le \widetilde{C} \left[h^{2 r} + h^{2 r} \Delta t^8 + \Delta t^8 \right],
\]
where $\widetilde{C} = 2 \max\{C_0 M_1, C_0 M_2, C_0 M_3, M_3, M_4 \} \exp(16 T)$ which then using the equivalence between $1$- and $2$-norms gives us that:
\[
  \norm{\eta_h^{N-\frac{1}{2}}}_{\varepsilon^{-1}} + \norm{\zeta_h^{N-\frac{1}{2}}}_{\varepsilon} + \norm{\xi_h^N}_{\mu} \le C_1 \left[h^{r} + h^r \Delta t^4 + \Delta t^4 \right],
\]
where $C_1 = 2 \sqrt{\widetilde{C}}$. Also, there are positive bounded constants $\widetilde{C_2}$, $\widetilde{C_3}$ and $\widetilde{C_4}$ such that:
\begin{alignat*}{4}
  \norm{\eta^{N-\frac{1}{2}}}_{\varepsilon^{-1}} &= \norm{(I - \Pi_h^0) p(t^{N-\frac{1}{2}})}_{\varepsilon^{-1}} &&\le \widetilde{C_2} h^r \norm{p(t^{N-\frac{1}{2}})}_{L^2_{\varepsilon^{-1}}(\Omega)} &&\implies \norm{\eta^{N-\frac{1}{2}}}_{\varepsilon^{-1}} &&\le C_2 h^r, \\
  \norm{\zeta^{N-\frac{1}{2}}}_{\varepsilon} &= \norm{(I - \Pi_h^1) E(t^{N-\frac{1}{2}})}_{\varepsilon} &&\le \widetilde{C_3} h^r \norm{E(t^{N-\frac{1}{2}})}_{L^2_{\varepsilon}(\Omega)} &&\implies \norm{\zeta^{N-\frac{1}{2}}}_{\varepsilon} &&\le C_3 h^r, \\
  \norm{\xi^N}_\mu &= \norm{(I - \Pi_h^2) H(t^N)}_\mu &&\le \widetilde{C_4} h^r \norm{H(t^N)}_{L^2_{\mu}(\Omega)} &&\implies \norm{\xi^N}_\mu &&\le C_4 h^r,
\end{alignat*}
in which $C_2 = \widetilde{C_2} \norm{p(t^{N-\frac{1}{2}})}_{L^2_{\varepsilon^{-1}}(\Omega)}$, $C_3 = \widetilde{C_3} \norm{E(t^{N-\frac{1}{2}})}_{L^2_{\varepsilon}(\Omega)}$ and $C_4 = \widetilde{C_4} \norm{H(t^N)}_{L^2_{\mu}(\Omega)}$ are all bounded positive constants due to Theorem~\ref{thm:dscrt_enrgy_estmt_lf4}. Finally, this provides us with our desired result by choosing $C = C_1 + C_2 + C_3 + C_4$:
\begin{align*}
  \norm{e_{p_h}^{N - \frac{1}{2}}}_{\varepsilon^{-1}} + \norm{e_{E_h}^{N - \frac{1}{2}}}_{\varepsilon} + \norm{e_{H_h}^{N}}_{\mu} &= \norm{\eta^{N - \frac{1}{2}} - \eta^{N - \frac{1}{2}}_h}_{\varepsilon^{-1}} + \norm{\zeta^{N - \frac{1}{2}} - \zeta^{N - \frac{1}{2}}_h}_{\varepsilon} + \norm{\xi^N - \xi^N_h}_\mu \\
  &\le C \left[ (\Delta t )^4 + h^r + h^r \Delta t^4 \right]. \qedhere
\end{align*}
\end{proof}

\section{Characterization of TS$_4$ Scheme} \label{sec:ts4}

\subsection{Time Discretization Stability}
For our implicit TS$_4$ scheme as in Equations~\labelcref{eqn:maxwell_p_ts4,eqn:maxwell_E_ts4,eqn:maxwell_H_ts4}, just like for LF$_4$ in Section~\labelcref{sec:implicit_lf4}, we can show that it is stable and converges with the correct fourth order of approximation in time. The analysis here is merely an appropriate update of the theorems and their proofs in Section~\labelcref{sec:implicit_lf4}. However, for the TS$_4$ scheme, we define the discrete energy at time $t^n$ to be the following and it is motivated by its desired conservation:
\begin{equation}
\mathcal{E}^n \coloneq \norm{p^{n+1} +p^n}^2_{\varepsilon^{-1}} + 2 \aInnerproduct{p^{n+1}}{p^n}_{\varepsilon^{-1}} + \norm{E^{n+1} +E^n}^2_\varepsilon + 2 \aInnerproduct{E^{n+1}}{E^n}_\varepsilon + \norm{H^{n+1} +H^n}^2_\mu + 2 \aInnerproduct{H^{n+1}}{H^n}_\mu.
\end{equation}
In addition, we require as assumption on the inner products which we state as a hypothesis next and we are motivated by our computational experiments to assert this.

\begin{assumption}[Positivity of Inner products] \label{assume:innerproducts}
The various $\aInnerproduct{\cdot}{\cdot}_\alpha$ for $\alpha = \varepsilon^{-1}, \varepsilon$ and $\mu$ are all positive for all $t \in [0, T]$ for the Maxwell's solution.
\end{assumption}

\begin{theorem}[Discrete Energy Conservation]\label{thm:dscrt_enrgy_cnsrvtn_ts4}
  For the semidiscretization using the TS$_4$ scheme as given in Equations~\labelcref{eqn:maxwell_p_ts4,eqn:maxwell_E_ts4,eqn:maxwell_H_ts4} and with Assumption~\labelcref{assume:innerproducts}, for any fixed time step $\Delta t > 0$ sufficiently small, the energy $\mathcal{E}^n$ is conserved, that is, $\mathcal{E}^n = \mathcal{E}^{n-1}$ for $n = 1, \dots, N-1$.
\end{theorem}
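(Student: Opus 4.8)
The plan is to establish the identity $\mathcal{E}^n = \mathcal{E}^{n-1}$ by a direct algebraic manipulation that mirrors the energy argument for the LF$_4$ scheme in Theorem~\ref{thm:dscrt_enrgy_estmt_lf4}, exploiting the fact that $\mathcal{E}^n$ was deliberately defined so that its telescoping difference aligns with the natural test functions of the scheme. First I would compute $\mathcal{E}^n - \mathcal{E}^{n-1}$ field by field. For each $u \in \{p, E, H\}$ with corresponding weight $\alpha \in \{\varepsilon^{-1}, \varepsilon, \mu\}$, expanding gives $\norm{u^{n+1}+u^n}_\alpha^2 + 2\aInnerproduct{u^{n+1}}{u^n}_\alpha = \norm{u^{n+1}}_\alpha^2 + 4\aInnerproduct{u^{n+1}}{u^n}_\alpha + \norm{u^n}_\alpha^2$, so upon subtracting the corresponding $(n-1)$ expression the $\norm{u^n}_\alpha^2$ terms cancel and the remainder factors cleanly as
\[
\aInnerproduct{u^{n+1} - u^{n-1}}{u^{n+1} + 4u^n + u^{n-1}}_\alpha.
\]
Thus $\mathcal{E}^n - \mathcal{E}^{n-1}$ is a sum of three such inner products, one per field.

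Next I observe that the second factor $u^{n+1}+4u^n+u^{n-1}$ is exactly the combination appearing in the scheme, so the natural choice is to test Equations~\eqref{eqn:maxwell_p_ts4}, \eqref{eqn:maxwell_E_ts4} and \eqref{eqn:maxwell_H_ts4} with $\widetilde p = p^{n+1}+4p^n+p^{n-1} =: p'$, and likewise $\widetilde E = E'$, $\widetilde H = H'$. Each scheme equation then rewrites one of the factored inner products (up to the common factor $2\Delta t$) as a coupling term: the $p$-equation yields $\tfrac{\Delta t}{3}\aInnerproduct{E'}{\nabla p'}$, the $E$-equation yields $-\tfrac{\Delta t}{3}\aInnerproduct{\nabla p'}{E'} + \tfrac{\Delta t}{3}\aInnerproduct{H'}{\nabla\times E'}$, and the $H$-equation yields $-\tfrac{\Delta t}{3}\aInnerproduct{\nabla\times E'}{H'}$. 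Adding the three identities, the two gradient-coupling terms cancel by symmetry of the $L^2$ inner product, and the two curl-coupling terms cancel for the same reason once the magnetic-field term is recast in its symmetric form, yielding $\mathcal{E}^n - \mathcal{E}^{n-1} = 0$. Since $n$ ranges over $1, \dots, N-1$ and all of $p^{n-1}, \dots, p^{n+1}$ are available, no bootstrapping is needed.

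I expect the only delicate point to be the curl coupling between the $E$- and $H$-equations. In Equation~\eqref{eqn:maxwell_E_ts4} the term is written with $\nabla\times H$ acting on the (less regular) magnetic field, whereas the cancellation hinges on pairing it in the symmetric $L^2$ form $\aInnerproduct{H'}{\nabla\times E'}$; reconciling the two relies on the integration-by-parts identity $\aInnerproduct{\nabla\times H'}{E'} = \aInnerproduct{H'}{\nabla\times E'}$, consistent with the weak formulation~\eqref{eqn:maxwell_E_wf}, whose boundary contribution vanishes precisely because the admissible functions satisfy $E\times n = 0$ on $\partial\Omega$. Care must also be taken to track the weights $\varepsilon^{-1}$, $\varepsilon$, $\mu$ correctly when converting the time-difference terms into the weighted inner products of $\mathcal{E}^n$, while noting that the gradient- and curl-coupling terms carry no weight.

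Finally, I would emphasize that the equality $\mathcal{E}^n = \mathcal{E}^{n-1}$ is purely algebraic and does not itself require positivity; the role of Assumption~\ref{assume:innerproducts} is to guarantee that the cross terms $2\aInnerproduct{u^{n+1}}{u^n}_\alpha$ are nonnegative, so that the conserved quantity $\mathcal{E}^n \ge 0$ is a bona fide energy controlling the discrete solution, which is what makes the conservation statement meaningful for the ensuing stability analysis.
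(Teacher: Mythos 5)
Your proposal is correct and takes essentially the same route as the paper: you test the scheme with $u^{n+1}+4u^n+u^{n-1}$ for each field (the paper merely scales these test functions by $2\Delta t$), cancel the gradient and curl couplings by symmetry of the $L^2$ inner product together with the integration-by-parts identity you flag (valid since $E \times n = 0$ on $\partial\Omega$), and use the same algebraic identity $\aInnerproduct{u^{n+1}-u^{n-1}}{u^{n+1}+4u^n+u^{n-1}}_\alpha = \bigl(\norm{u^{n+1}+u^n}_\alpha^2 + 2\aInnerproduct{u^{n+1}}{u^n}_\alpha\bigr) - \bigl(\norm{u^n+u^{n-1}}_\alpha^2 + 2\aInnerproduct{u^n}{u^{n-1}}_\alpha\bigr)$, only organized in the reverse direction (starting from $\mathcal{E}^n-\mathcal{E}^{n-1}$ rather than from the tested equations). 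Your closing observation is likewise consistent with the paper, whose proof never invokes Assumption~\ref{assume:innerproducts} in the algebra either; it enters only in making $\mathcal{E}^n$ a genuine nonnegative energy for the ensuing stability and error analysis.
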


\begin{proof}
  Since Equations~\labelcref{eqn:maxwell_p_ts4,eqn:maxwell_E_ts4,eqn:maxwell_H_ts4} are true for all $\widetilde{p}\in \mathring{H}^1_{\varepsilon^{-1}}(\Omega)$, $\widetilde{E} \in \mathring{H}_{\varepsilon}(\curl; \Omega)$, $\widetilde{H} \in \mathring{H}_{\mu}(\divgn; \Omega)$, using $\widetilde{p} = 2 \Delta t \left(p^{n + 1} + 4 p^n + p^{n - 1} \right)$, $\widetilde{E} = 2 \Delta t \left(E^{n + 1} + 4 E^n + E^{n - 1} \right)$ and $\widetilde{H} = 2 \Delta t \left(H^{n + 1} + 4 H^n + H^{n - 1} \right)$ in them and adding the resultant equations using properties of the inner product, we obtain:
\begin{multline*}
  \aInnerproduct{\varepsilon^{-1} \left( p^{n + 1} - p^{n - 1} \right)}{p^{n + 1} + 4 p^n + p^{n - 1}} + \aInnerproduct{\varepsilon \left(E^{n + 1} - E^{n -1} \right)}{E^{n +1} + 4 E^n + E^{n - 1}} \\ + \aInnerproduct{\mu \left( H^{n + 1} - H^{n - 1} \right)}{H^{n + 1} + 4 H^n + H^{n -1}} = 0.
\end{multline*}
Consider next that:
\begin{align*}
\aInnerproduct{\varepsilon^{-1} \left( p^{n + 1} - p^{n - 1} \right)}{p^{n + 1} + 4 p^n + p^{n - 1}} & = \aInnerproduct{\varepsilon^{-1} \left( p^{n + 1} + p^n - p^n - p^{n - 1} \right)}{p^{n + 1} + p^n + p^n + p^{n - 1}} \\ & \quad  + 2\aInnerproduct{\varepsilon^{-1} \left( p^{n + 1} - p^{n - 1} \right)}{p^n} \\
& = \aInnerproduct{\varepsilon^{-1} \left( p^{n + 1} + p^n \right)}{p^{n + 1} + p^n} - \aInnerproduct{\varepsilon^{-1} \left(p^n + p^{n - 1} \right)}{p^n + p^{n - 1}}  \\ & \quad + 2\aInnerproduct{\varepsilon^{-1} p^{n + 1}}{p^n} - 2\aInnerproduct{\varepsilon^{-1} p^n}{p^{n - 1}} \\
& = \norm{p^{n + 1} + p^n}_\varepsilon^{-1}  + 2\aInnerproduct{p^{n + 1}}{p^n}_\varepsilon^{-1} - \norm{p^n + p^{n - 1}}_\varepsilon^{-1} - 2\aInnerproduct{p^n}{p^{n - 1}}_\varepsilon^{-1}.
\end{align*}
So, we have the following:
\[
\aInnerproduct{\varepsilon^{-1} \left( p^{n + 1} - p^{n - 1} \right)}{p^{n + 1} + 4 p^n + p^{n - 1}} = \norm{p^{n + 1} + p^n}_\varepsilon^{-1}  + 2\aInnerproduct{p^{n + 1}}{p^n}_\varepsilon^{-1} - \norm{p^n + p^{n - 1}}_\varepsilon^{-1} - 2\aInnerproduct{p^n}{p^{n - 1}}_\varepsilon^{-1}.
\]
We can similarly have the corresponding expressions for $E$ and $H$. Using these various expressions in the first equation in our proof, we get that:
\begin{multline*}
\norm{p^{n + 1} + p^n}_{\varepsilon^{-1}}  + 2\aInnerproduct{p^{n + 1}}{p^n}_{\varepsilon^{-1}} - \norm{p^n + p^{n - 1}}_{\varepsilon^{-1}} - 2\aInnerproduct{p^n}{p^{n - 1}}_{\varepsilon^{-1}} \\ + \norm{E^{n + 1} + E^n}_\varepsilon + 2\aInnerproduct{E^{n + 1}}{E^n}_\varepsilon - \norm{E^n + E^{n - 1}}_\varepsilon - 2\aInnerproduct{E^n}{E^{n - 1}}_\varepsilon \\ + \norm{H^{n + 1} + H^n}_\mu  + 2\aInnerproduct{H^{n + 1}}{H^n}_\mu - \norm{H^n + H^{n - 1}}_\mu - 2\aInnerproduct{H^n}{H^{n - 1}}_\mu = 0,
\end{multline*}
which yields the desired result:
\begin{multline*}
 \norm{p^{n + 1} + p^n}_{\varepsilon^{-1}}  + 2\aInnerproduct{p^{n + 1}}{p^n}_{\varepsilon^{-1}} + \norm{E^{n + 1} + E^n}_\varepsilon +  2\aInnerproduct{E^{n + 1}}{E^n}_\varepsilon + \\
 \norm{H^{n + 1} + H^n}_\mu  + 2\aInnerproduct{H^{n + 1}}{H^n}_\mu = \norm{p^n + p^{n - 1}}_{\varepsilon^{-1}} + 2\aInnerproduct{p^n}{p^{n - 1}}_{\varepsilon^{-1}} + \\
 \norm{E^n + E^{n - 1}}_\varepsilon + 2\aInnerproduct{E^n}{E^{n - 1}}_\varepsilon  + \norm{H^n + H^{n - 1}}_\mu + 2\aInnerproduct{H^n}{H^{n - 1}}_\mu. \qedhere
\end{multline*}
% Thus, we get our desired result
% \[
%   \mathcal{E}^n = \mathcal{E}^{n-1} \quad \forall n \in {1, \dotso, N-1}. \qedhere
% \]
\end{proof}

\begin{theorem}[Discrete Error Estimate]\label{thm:dscrt_error_estmt_ts4}
For the semidiscretization TS$_4$ as given in Equations~\labelcref{eqn:maxwell_p_ts4,eqn:maxwell_E_ts4,eqn:maxwell_H_ts4}, for the solution $(p, E, H)$ of the variationally posed Maxwell's system as in Equations~\labelcref{eqn:maxwell_p_wf,eqn:maxwell_E_wf,eqn:maxwell_H_wf} with initial conditions as in Equation~\eqref{eqn:ICs}, with Assumption~\labelcref{assume:innerproducts} and sufficient regularity $p \in C^5(0, T; \mathring{H}^1_{\varepsilon^{-1}}(\Omega))$, $E \in C^5(0, T; \mathring{H}_{\varepsilon}(\curl; \Omega))$, and $H \in C^5(0, T; \mathring{H}_{\mu}(\divgn; \Omega))$, and for the time step $\Delta t > 0$ sufficiently small, there exists a positive bounded constant $C$ independent of $\Delta t$ such that:
\[
  \norm{e_p^{N - 1}}_{\varepsilon^{-1}} + \norm{e_E^{N - 1}}_{\varepsilon} + \norm{e_H^{N - 1}}_{\mu} \le C \left[ \left(\Delta t\right)^4 + \norm{e_p^0}_{\varepsilon^{-1}} + \norm{e_E^0}_{\varepsilon} + \norm{e_H^0}_{\mu} + \norm{e_p^1}_{\varepsilon^{-1}} + \norm{e_E^1}_{\varepsilon} + \norm{e_H^1}_{\mu} \right],
\]
where $e_p^n \coloneq p(t^n) - p^{n}$, $e_E^n \coloneq E(t^n) - E^n$ and $e_H^n \coloneq H(t^n) - H^n$ are the errors in the time semidiscretization of $p$, $E$ and $H$, respectively.
\end{theorem}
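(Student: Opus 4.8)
The plan is to mirror the structure of Theorem~\ref{thm:dscrt_error_estmt_lf4} but to organize everything around the discrete energy $\mathcal{E}^n$ used for TS$_4$ and its conservation identity from Theorem~\ref{thm:dscrt_enrgy_cnsrvtn_ts4}. First I would Taylor expand each field about $t^n$ with integral remainder and record the two consistency relations underlying the temporal strategy: the central difference obeys $\frac{u(t^{n+1}) - u(t^{n-1})}{2\Delta t} = \frac{\partial u}{\partial t}(t^n) + \frac{\Delta t^2}{6}\frac{\partial^3 u}{\partial t^3}(t^n) + R_u^n$, while the Simpson-type average obeys $\frac{1}{6}\bigl(u(t^{n+1}) + 4u(t^n) + u(t^{n-1})\bigr) = u(t^n) + \frac{\Delta t^2}{6}\frac{\partial^2 u}{\partial t^2}(t^n) + r_u^n$, where $R_u^n$ involves $\partial_t^5 u$ and $r_u^n$ involves $\partial_t^4 u$, each of size $\mathcal{O}(\Delta t^4)$. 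The crucial observation, and the reason TS$_4$ is fourth- rather than second-order, is that upon substituting these into each Maxwell equation the leading terms reproduce the continuous weak form (hence vanish) and the $\mathcal{O}(\Delta t^2)$ contributions assemble into $\frac{\Delta t^2}{6}\partial_t^2$ of a continuous Maxwell equation, which is identically zero. Thus the consistency error of each discrete equation evaluated on the exact solution reduces to the remainders $R_p^n, R_E^n, R_H^n$ together with the spatially differentiated Simpson remainders $\nabla r_p^n, \nabla\cdot r_E^n, \nabla\times r_E^n, \nabla\times r_H^n$.

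Next I would subtract the TS$_4$ scheme in Equations~\labelcref{eqn:maxwell_p_ts4,eqn:maxwell_E_ts4,eqn:maxwell_H_ts4} from these consistency relations to obtain weak equations for the semidiscrete errors $e_p^n, e_E^n, e_H^n$ with the remainders carried on the right-hand side. Into these I would insert the energy test functions $\widetilde{p} = 2\Delta t\left(e_p^{n+1} + 4e_p^n + e_p^{n-1}\right)$, $\widetilde{E} = 2\Delta t\left(e_E^{n+1} + 4e_E^n + e_E^{n-1}\right)$, and $\widetilde{H} = 2\Delta t\left(e_H^{n+1} + 4e_H^n + e_H^{n-1}\right)$, exactly as in the energy-conservation proof. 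Summing the three equations cancels the $p$--$E$ and $E$--$H$ couplings by symmetry of the inner product, and the remaining time-difference terms telescope, via the identity established in Theorem~\ref{thm:dscrt_enrgy_cnsrvtn_ts4}, into $\mathcal{G}^n - \mathcal{G}^{n-1}$, where $\mathcal{G}^n$ is the error analogue of $\mathcal{E}^n$. On the right-hand side I would integrate by parts to transfer the spatial operators from the test functions onto the remainders, so that only the undifferentiated error combinations $e^{n+1} + 4e^n + e^{n-1}$ remain paired against them, and then apply Cauchy--Schwarz and AM--GM to split each contribution into a remainder $L^2$-norm-squared term and an error-norm term.

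I would then sum from $n = 1$ to $N-2$ so that the telescoping yields $\mathcal{G}^{N-2} - \mathcal{G}^0$, and apply the discrete Gronwall inequality of Lemma~\ref{lemma:gronwall_dscrt} to absorb the accumulated error-norm terms. The remainder contributions are bounded exactly as in Theorem~\ref{thm:dscrt_error_estmt_lf4}: each $\norm{R_u^n}^2$ and each $\norm{\nabla\cdot r_E^n}^2$-type term is $\mathcal{O}(\Delta t^8)$ after the factor $\Delta t\sum_n$, using Cauchy--Schwarz on the integral remainders together with the regularity hypotheses $p, E, H \in C^5$. Finally I would convert the energy bound back to norms: an upper bound $\mathcal{G}^0 \lesssim \norm{e_p^0}^2_{\varepsilon^{-1}} + \norm{e_E^0}^2_\varepsilon + \norm{e_H^0}^2_\mu + \norm{e_p^1}^2_{\varepsilon^{-1}} + \norm{e_E^1}^2_\varepsilon + \norm{e_H^1}^2_\mu$ follows from Cauchy--Schwarz, and taking square roots (with $\sqrt{\Delta t^8} = \Delta t^4$) delivers the stated estimate.

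The main obstacle is the indefinite cross terms $2\aInnerproduct{\cdot}{\cdot}$ in $\mathcal{E}^n$: unlike the LF$_4$ energy, which is a clean sum of squared norms, the TS$_4$ error energy $\mathcal{G}^{N-2} = \norm{e^{N-1} + e^{N-2}}^2 + 2\aInnerproduct{e^{N-1}}{e^{N-2}} + \cdots$ is not manifestly coercive, so extracting the lower bound $\norm{e_p^{N-1}}^2 + \norm{e_E^{N-1}}^2 + \norm{e_H^{N-1}}^2 \lesssim \mathcal{G}^{N-2}$ needed to isolate the final-time error is delicate. This is precisely where Assumption~\ref{assume:innerproducts} enters: positivity of the relevant inner products guarantees $\mathcal{G}^{N-2} \ge \norm{e_p^{N-1}}^2_{\varepsilon^{-1}} + \norm{e_E^{N-1}}^2_\varepsilon + \norm{e_H^{N-1}}^2_\mu$, which closes the argument. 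A secondary technical care is needed in the integration-by-parts step to ensure the differentiated remainders $\nabla\cdot r_E^n$, $\nabla\times r_E^n$, and $\nabla\times r_H^n$ are meaningful and bounded, which requires the spatial regularity implicit in the $C^5\bigl(0, T; \mathring{H}_\bullet\bigr)$ assumptions.
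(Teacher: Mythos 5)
Your proposal reproduces the paper's own proof essentially step for step: the same two Taylor consistency relations (the central difference with remainder $R_u^n$ and the second-difference/Simpson remainder $r_u^n$, with the $\frac{\Delta t^2}{6}\partial_t^2$ terms cancelling against the twice-time-differentiated Maxwell system), the same error equations tested against $2\Delta t\left(e^{n+1} + 4e^n + e^{n-1}\right)$, the same telescoping of the TS$_4$ energy identity from Theorem~\ref{thm:dscrt_enrgy_cnsrvtn_ts4}, Assumption~\ref{assume:innerproducts} invoked at exactly the same point to compensate for the non-coercive cross terms in the error energy, and the same discrete Gronwall step with the $\mathcal{O}(\Delta t^8)$ remainder bounds. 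I see no gaps; the only immaterial deviations are notational (you absorb the factor $\frac{\Delta t^2}{6}$ into $r_u^n$, and you perform the integration by parts on the remainders at the testing stage rather than when forming the consistency equations, as the paper does).
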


\begin{proof}
Using the Taylor remainder theorem, and expressing $u(t)$ about $t = t^n$ for $u = \{p, E, H\}$ and $n = 1, \dots, N-1$, we have that:
\[
  u(t) = u(t^n) + \dfrac{\partial u}{\partial t}(t^n)(t - t^n) + \dfrac{\partial^2 u}{\partial t^2}(t^n) \dfrac{(t - t^n)^2}{2!} + \dfrac{\partial^3 u}{\partial t^3}(t^n) \dfrac{(t - t^n)^3}{3!} + \dfrac{\partial^4 u}{\partial t^4}(t^n) \dfrac{(t - t^n)^4}{4!} + \int\limits_{t^n}^{t} \dfrac{(t - s)^4}{4!} \dfrac{\partial^5 u}{\partial t^5}(s) ds,
\]
which when evaluated at $t = t^{n + 1}$ and $t = t^{n - 1}$ yields:
\begin{multline*}
  u(t^{n +1}) = u(t^n) + \dfrac{\partial u}{\partial t}(t^n) (t^{n + 1} - t^n) + \dfrac{\partial^2 u}{\partial t^2}(t^n) \dfrac{(t^{n +1} - t^n)^2}{2} + \dfrac{\partial^3 u}{\partial t^3}(t^n) \dfrac{(t^{n + 1} - t^n)^3}{6} \\ + \dfrac{\partial^4 u}{\partial t^4}(t^n) \dfrac{(t^{n + 1} - t^n)^4}{24} + \int\limits_{t^n}^{\mathclap{t^{n + 1}}} \dfrac{(t^{n +1} - s)^4}{24} \dfrac{\partial^5 u}{\partial t^5}(s) ds, 
 \end{multline*}
\begin{multline*}
u(t^{n - 1}) = u(t^n) + \dfrac{\partial u}{\partial t}(t^n)(t^{n - 1} - t^n) + \dfrac{\partial^2 u}{\partial t^2}(t^n) \dfrac{(t^{n - 1} - t^n)^2}{2} + \dfrac{\partial^3 u}{\partial t^3}(t^n) \dfrac{(t^{n - 1} - t^n)^3}{6} \\ + \dfrac{\partial^4 u}{\partial t^4}(t^n) \dfrac{(t^{n - 1} - t^n)^4}{24} + \int\limits_{t^n}^{\mathclap{t^{n - 1}}} \dfrac{(t^{n - 1} - s)^4}{24} \dfrac{\partial^5 u}{\partial t^5}(s) ds.
\end{multline*}
Subtracting the previous two equations and using the result in the variational formulation as in Equations~\labelcref{eqn:maxwell_p_wf,eqn:maxwell_E_wf,eqn:maxwell_H_wf} leads to the following:
\[
  \ainnerproduct{\dfrac{u(t^{n + 1}) - u(t^{n - 1})}{2 \Delta t}}{\widetilde{u}} = \ainnerproduct{\dfrac{\partial u}{\partial t}(t^n)}{\widetilde{u}} + \dfrac{\Delta t^2}{6} \ainnerproduct{\dfrac{\partial^3 u}{\partial t^3}(t^n)}{\widetilde{u}} + \ainnerproduct{R^n_u}{\widetilde{u}},
\]
in which we have defined that:
\[
  R^n_u \coloneq 
  \begin{cases}
  0, & n = 0,\\
  \dfrac{1}{2 \Delta t} \left[\displaystyle\int\limits_{t^{n - 1}}^{t^n} \dfrac{(t^{n - 1} - s)^4}{24} \dfrac{\partial^5 u}{\partial t^5}(s) ds + \int\limits_{t^n}^{t^{n + 1}} \dfrac{(t^{n + 1} - s)^4}{24} \dfrac{\partial^5 u}{\partial t^5}(s) ds \right], & n = 1, \dots, N-1,
  \end{cases}
\]
for $u = \{p, E, H\}$. Again, using the Taylor remainder theorem, and expressing $u(t)$ about $t = t^n$ for $u = \{p, E, H\}$ with $n = 0, 1, \dots, N-1$, ad evaluating this at $t = t^{n + 1}$ and $t = t^{n - 1}$ yields:
\[
  u(t^{n +1}) = u(t^n) + \dfrac{\partial u}{\partial t}(t^n) (t^{n + 1} - t^n) + \dfrac{\partial^2 u}{\partial t^2}(t^n) \dfrac{(t^{n +1} - t^n)^2}{2} + \dfrac{\partial^3 u}{\partial t^3}(t^n) \dfrac{(t^{n + 1} - t^n)^3}{6} + \int\limits_{t^n}^{\mathclap{t^{n + 1}}} \dfrac{(t^{n +1} - s)^3}{6} \dfrac{\partial^4 u}{\partial t^4}(s) ds, 
 \]
\[
u(t^{n - 1}) = u(t^n) + \dfrac{\partial u}{\partial t}(t^n)(t^{n - 1} - t^n) + \dfrac{\partial^2 u}{\partial t^2}(t^n) \dfrac{(t^{n - 1} - t^n)^2}{2} + \dfrac{\partial^3 u}{\partial t^3}(t^n) \dfrac{(t^{n - 1} - t^n)^3}{6} + \int\limits_{t^n}^{\mathclap{t^{n - 1}}} \dfrac{(t^{n - 1} - s)^3}{6} \dfrac{\partial^4 u}{\partial t^4}(s) ds.
\]
Adding these two last equations, and using the result again in the variational formulation as in Equations~\labelcref{eqn:maxwell_p_wf,eqn:maxwell_E_wf,eqn:maxwell_H_wf} leads to:
\[
  \ainnerproduct{\dfrac{u(t^{n + 1}) - 2 u(t^n) + u(t^{n - 1})}{\Delta t^2}}{\widetilde{u}} = \ainnerproduct{\dfrac{\partial^2 u}{\partial t^2}(t^n)}{\widetilde{u}} + \ainnerproduct{r^n_u}{\widetilde{u}},
\]
in which we have again defined that:
\[
  r^n_u \coloneq 
  \begin{cases}
  0, & n = 0,\\
  \dfrac{1}{\Delta t^2} \left[-\displaystyle\int\limits_{t^{n - 1}}^{t^n} \dfrac{(t^{n - 1} - s)^3}{6} \dfrac{\partial^4 u}{\partial t^4}(s) ds + \int\limits_{t^n}^{t^{n + 1}} \dfrac{(t^{n + 1} - s)^3}{6} \dfrac{\partial^4 u}{\partial t^4}(s) ds \right], & n = 1, \dots, N-1,
  \end{cases}
\]
for $u = p, E$ or $H$. Using these terms in the weak formulation as in Equations~\labelcref{eqn:maxwell_p_wf,eqn:maxwell_E_wf,eqn:maxwell_H_wf} and evaluating at $t = t^n$ for $p$, $E$, and $H$ with $n = 1, \dots, N-1$, we are led to:
    \begin{subequations}
    \begin{equation}
   \aInnerproduct{\varepsilon^{-1} \dfrac{p(t^{n + 1}) - p(t^{n - 1})}{2 \Delta t}}{\widetilde{p}} - \dfrac{1}{6} \aInnerproduct{E(t^{n + 1}) +4 E(t^n)  + E(t^{n - 1})}{\nabla \widetilde{p}} = 
   \aInnerproduct{\varepsilon^{-1} R_p^n + \dfrac{\Delta t^2}{6} \nabla \cdot r_E^n}{\widetilde{p}},  \label{eqn:remainder_p_ts4}
   \end{equation}
\begin{equation}
\begin{split}
   \dfrac{1}{6} \aInnerproduct{\nabla \left(p(t^{n + 1}) + 4 p(t^n)  + p(t^{n - 1}) \right)}{\widetilde{E}}  + \aInnerproduct{\varepsilon \dfrac{E(t^{n + 1}) - E(t^{n - 1})}{2 \Delta t}}{\widetilde{E}}\\ - \dfrac{1}{6} \aInnerproduct{\nabla \times \left(H(t^{n + 1}) + 4 H(t^n)  + H(t^{n - 1}) \right)}{\widetilde{E}} = \aInnerproduct{\varepsilon R_E^n + \dfrac{\Delta t^2}{6} \left(\nabla r_p^n - \nabla \times r_H^n \right)}{\widetilde{E}},  \label{eqn:remainder_E_ts4}
   \end{split}
\end{equation}
\begin{equation}
  \aInnerproduct{\mu \dfrac{H(t^{n + 1}) - H(t^{n - 1})}{2 \Delta t}}{\widetilde{H}} +  \dfrac{1}{6} \aInnerproduct{\nabla \times \left( E(t^{n + 1}) + 4 E(t^n)  + E(t^{n - 1}) \right)}{\widetilde{H}} = \aInnerproduct{\mu R_H^n + \dfrac{\Delta t^2}{6} \nabla \times r_E^n}{\widetilde{H}}.  \label{eqn:remainder_H_ts4}
\end{equation}
\end{subequations}
Then, subtracting the TS$_4$ scheme equations as in Equations~\labelcref{eqn:maxwell_p_ts4,eqn:maxwell_E_ts4,eqn:maxwell_H_ts4} leads us to the following set of equations:
    \begin{equation*}
   \aInnerproduct{\varepsilon^{-1} \dfrac{e_p^{n + 1} - e_p^{n - 1}}{2 \Delta t}}{\widetilde{p}} - \dfrac{1}{6} \aInnerproduct{e_E^{n + 1} +4 e_E^n  + e_E^{n - 1}}{\nabla \widetilde{p}} = \aInnerproduct{\varepsilon^{-1} R_p^n + \dfrac{\Delta t^2}{6} \nabla \cdot r_E^n}{\widetilde{p}}, 
   \end{equation*}
\begin{equation*}
\begin{split}
   \dfrac{1}{6} \aInnerproduct{\nabla \left(e_p^{n + 1} + 4 e_p^n  + e_p^{n - 1} \right)}{\widetilde{E}}  + \aInnerproduct{\varepsilon \dfrac{e_E^{n + 1} - e_E^{n - 1}}{2 \Delta t}}{\widetilde{E}} - \dfrac{1}{6} \aInnerproduct{\nabla \times \left(e_H^{n + 1} + 4 e_H^n  + e_H^{n - 1} \right)}{\widetilde{E}} \\ = \aInnerproduct{\varepsilon R_E^n + \dfrac{\Delta t^2}{6} \left(\nabla r_p^n - \nabla \times r_H^n \right)}{\widetilde{E}}, 
   \end{split}
\end{equation*}
\begin{equation*}
  \aInnerproduct{\mu \dfrac{e_H^{n + 1} - e_H^{n - 1}}{2 \Delta t}}{\widetilde{H}} +  \dfrac{1}{6} \aInnerproduct{\nabla \times \left( e_E^{n + 1} + 4 e_E^n  + e_E^{n - 1} \right)}{\widetilde{H}} = \aInnerproduct{\mu R_H^n + \dfrac{\Delta t^2}{6} \nabla \times r_E^n}{\widetilde{H}}. 
\end{equation*}
Now, in this set of weak formulation equations for the errors, we choose the test functions to be $\widetilde{p} = 2 \Delta t \left(e_p^{n + 1} + 4 e_p^n + e_p^{n - 1} \right)$, $\widetilde{E} = 2 \Delta t \left(e_E^{n + 1} + 4 e_E^n + e_E^{n - 1} \right)$ and $\widetilde{H} = 2 \Delta t \left(e_H^{n + 1} + 4 e_H^n + e_H^{n - 1} \right)$. Next, by following essentially the same sequence of steps as in Theorem~\ref{thm:dscrt_enrgy_cnsrvtn_ts4}, we obtain the estimates for these error terms to be:
\begin{multline*}
\norm{e_p^{n + 1} + e_p^n}_{\varepsilon^{-1}}  + 2\aInnerproduct{e_p^{n + 1}}{e_p^n}_{\varepsilon^{-1}} - \norm{e_p^n + e_p^{n - 1}}_{\varepsilon^{-1}} - 2\aInnerproduct{e_p^n}{e_p^{n - 1}}_{\varepsilon^{-1}} + \norm{e_E^{n + 1} + e_E^n}_\varepsilon + 2\aInnerproduct{e_E^{n + 1}}{e_E^n}_\varepsilon \\ - \norm{e_E^n + e_E^{n - 1}}_\varepsilon - 2\aInnerproduct{e_E^n}{e_E^{n - 1}}_\varepsilon + \norm{e_H^{n + 1} + e_H^n}_\mu  + 2\aInnerproduct{e_H^{n + 1}}{e_H^n}_\mu - \norm{e_H^n + e_H^{n - 1}}_\mu - 2\aInnerproduct{e_H^n}{e_H^{n - 1}}_\mu \\ = 2 \Delta t\left(\aInnerproduct{\varepsilon^{-1} R_p^n + \dfrac{\Delta t^2}{6} \nabla \cdot r_E^n}{e_p^{n + 1} + 4 e_p^n + e_p^{n - 1}} + \aInnerproduct{\varepsilon R_E^n + \dfrac{\Delta t^2}{6} \left(\nabla r_p^n - \nabla \times r_H^n \right)}{e_E^{n + 1} + 4 e_E^n + e_E^{n - 1}} \right. \\ \left.+ \aInnerproduct{\mu R_H^n + \dfrac{\Delta t^2}{6} \nabla \times r_E^n}{e_H^{n + 1} + 4 e_H^n + e_H^{n - 1}} \right).
\end{multline*}
Applying the Cauchy-Schwarz, triangle and AM-GM inequalities appropriately on right-hand side terms, and using that $\norm{a+b}^2 = \norm{a}^2 + \norm{b}^2 + 2 \aInnerproduct{a}{b}$ for the left-hand side terms of the above equation, we obtain:
\begin{multline*}
  \norm{e_p^{n +1}}^2_{\varepsilon^{-1}} + \norm{e_p^n}^2_{\varepsilon^{-1}} + 4 \aInnerproduct{e_p^{n+1}}{e_p^n}_{\varepsilon^{-1}} -  \norm{e_p^n}^2_{\varepsilon^{-1}} - \norm{e_p^{n - 1}}^2_{\varepsilon^{-1}} -  4 \aInnerproduct{e_p^n}{e_p^{n-1}}_{\varepsilon^{-1}} \\ + \norm{e_E^{n +1}}^2_\varepsilon + \norm{e_E^n}^2_\varepsilon + 4 \aInnerproduct{e_E^{n+1}}{e_E^n}_\varepsilon - \norm{e_E^n}^2_\varepsilon - \norm{e_E^{n - 1}}^2_\varepsilon -  4 \aInnerproduct{e_E^n}{e_E^{n-1}}_\varepsilon \\ + \norm{e_H^{n +1}}^2_\mu + \norm{e_H^n}^2_\mu + 4 \aInnerproduct{e_H^{n+1}}{e_H^n}_\mu -  \norm{e_H^n}^2_\mu - \norm{e_H^{n - 1}}^2_\mu -  4 \aInnerproduct{e_H^n}{e_H^{n-1}}_\mu \\
  \le 3 \Delta t \left[ \norm{e_p^{n +1}}^2_{\varepsilon^{-1}} + \norm{e_p^n}^2_{\varepsilon^{-1}} + \norm{e_p^{n - 1}}^2_{\varepsilon^{-1}} + \norm{e_E^{n + 1}}^2_\varepsilon + \norm{e_E^n}^2_\varepsilon + \norm{e_E^{n - 1}}^2_\varepsilon + \norm{e_H^{n+1}}^2_\mu +  \norm{e_H^n}^2_\mu + \norm{e_H^{n-1}}^2_\mu \right] \\ +
18 \Delta t \left[ \norm{R_p^n}^2_{\varepsilon^{-1}} + \norm{R_E^n}^2_{\varepsilon} + \norm{R_H^n}^2_{\mu} + \dfrac{\Delta t^4}{36} \left( \norm{\nabla r_p^n} ^2_{\varepsilon^{-1}} + \norm{\nabla \cdot r_E^n}^2_{\varepsilon} + \varepsilon^{-1} \mu^{-1} \left(\norm{\nabla \times r_E^n}^2_{\varepsilon} + \norm{\nabla \times r_H^n}^2_{\mu} \right) \right) \right].
\end{multline*}
Summing over $n = 1$ to $N-2$ on both sides, we get:
\begin{multline*}
  \norm{e_p^{N - 1}}^2_{\varepsilon^{-1}} + \norm{e_p^{N-2}}^2_{\varepsilon^{-1}} + 4 \aInnerproduct{e_p^{N - 1}}{e_p^{N-2}}_{\varepsilon^{-1}} -  \norm{e_p^1}^2_{\varepsilon^{-1}} - \norm{e_p^0}^2_{\varepsilon^{-1}} -  4 \aInnerproduct{e_p^1}{e_p^0}_{\varepsilon^{-1}} + \norm{e_E^{N-1}}^2_\varepsilon + \norm{e_E^{N-2}}^2_\varepsilon \\ + 4 \aInnerproduct{e_E^{N-1}}{e_E^{N-2}}_\varepsilon - \norm{e_E^1}^2_\varepsilon - \norm{e_E^0}^2_\varepsilon -  4 \aInnerproduct{e_E^1}{e_E^0}_\varepsilon + \norm{e_H^{N-1}}^2_\mu + \norm{e_H^{N-2}}^2_\mu + 4 \aInnerproduct{e_H^{N-1}}{e_H^{N-2}}_\mu \\ -  \norm{e_H^1}^2_\mu - \norm{e_H^0}^2_\mu -  4 \aInnerproduct{e_H^1}{e_H^0}_\mu 
  \le 9 \Delta t \sum\limits_{n=0}^{N-1} \left[ \norm{e_p^n}^2_{\varepsilon^{-1}} + \norm{e_E^n}^2_\varepsilon + \norm{e_H^n}^2_\mu \right] +
18 \Delta t \sum\limits_{n=1}^{N-2} \left[ \norm{R_p^n}^2_{\varepsilon^{-1}} + \norm{R_E^n}^2_{\varepsilon} \right. \\ \left. + \norm{R_H^n}^2_{\mu} + \dfrac{\Delta t^4}{36} \left( \norm{\nabla r_p^n} ^2_{\varepsilon^{-1}} + \norm{\nabla \cdot r_E^n}^2_{\varepsilon} + \varepsilon^{-1} \mu^{-1} \left(\norm{\nabla \times r_E^n}^2_{\varepsilon} + \norm{\nabla \times r_H^n}^2_{\mu} \right) \right) \right].
\end{multline*}
Since $\aInnerproduct{\cdot}{\cdot}_\alpha$ are non-negative for $\alpha = \{\varepsilon^{-1}, \varepsilon, \mu\}$, thus using the Cauchy-Schwarz inequality we obtain that:
\begin{multline*}
  \norm{e_p^{N - 1}}^2_{\varepsilon^{-1}} + \norm{e_E^{N-1}}^2_\varepsilon  + \norm{e_H^{N-1}}^2_\mu 
  \le 3\left(\norm{e_p^0}^2_{\varepsilon^{-1}} + \norm{e_E^0}^2_\varepsilon + \norm{e_H^0}^2_\mu + \norm{e_p^1}^2_{\varepsilon^{-1}} + \norm{e_E^1}^2_\varepsilon + \norm{e_H^1}^2_\mu \right) \\ + 18 \Delta t \sum\limits_{n=0}^{N-1} \left[ \norm{e_p^n}^2_{\varepsilon^{-1}} + \norm{e_E^n}^2_\varepsilon + \norm{e_H^n}^2_\mu \right] +
18 \Delta t \sum\limits_{n=0}^{N-1} \left[ \norm{R_p^n}^2_{\varepsilon^{-1}} + \norm{R_E^n}^2_{\varepsilon} + \norm{R_H^n}^2_{\mu} \right. \\ \left. + \dfrac{\Delta t^4}{36} \left( \norm{\nabla r_p^n} ^2_{\varepsilon^{-1}} + \norm{\nabla \cdot r_E^n}^2_{\varepsilon} + \varepsilon^{-1} \mu^{-1} \left(\norm{\nabla \times r_E^n}^2_{\varepsilon} + \norm{\nabla \times r_H^n}^2_{\mu} \right) \right) \right].
\end{multline*}
Applying the discrete Gronwall inequality similarly as in Theorem~\ref{thm:dscrt_error_estmt_lf4}, we obtain the estimate:
\begin{multline*}
  \norm{e_p^{N - 1}}^2_{\varepsilon^{-1}} + \norm{e_E^{N-1}}^2_\varepsilon  + \norm{e_H^{N-1}}^2_\mu 
  \le 3\left(\norm{e_p^0}^2_{\varepsilon^{-1}} + \norm{e_E^0}^2_\varepsilon + \norm{e_H^0}^2_\mu + \norm{e_p^1}^2_{\varepsilon^{-1}} + \norm{e_E^1}^2_\varepsilon + \norm{e_H^1}^2_\mu \right) \\ +
18 \Delta t \sum\limits_{n=0}^{N-1} \left[ \norm{R_p^n}^2_{\varepsilon^{-1}} + \norm{R_E^n}^2_{\varepsilon} + \norm{R_H^n}^2_{\mu} \right. \\ \left. + \dfrac{\Delta t^4}{36} \left( \norm{\nabla r_p^n} ^2_{\varepsilon^{-1}} + \norm{\nabla \cdot r_E^n}^2_{\varepsilon} + \varepsilon^{-1} \mu^{-1} \left(\norm{\nabla \times r_E^n}^2_{\varepsilon} + \norm{\nabla \times r_H^n}^2_{\mu} \right) \right) \right] \exp\left( 36 T \right).
\end{multline*}
Now, we need to obtain bounding estimates for each of the Taylor remainder terms. So with $u = \{p, E, H\}$ and $n = 0, 1, \dots, N-1$, we argue as following:
\begin{align*}
\norm{R^n_u}^2_\alpha & = \dfrac{1}{4 (24)^2 \Delta t^2} \norm[\bigg]{\int\limits_{\mathclap{t^{n - 1}}}^{\mathclap{t^n}} (t^{n - 1} - s)^4 \dfrac{\partial^5 u}{\partial t^5}(s) ds + \int\limits_{\mathclap{t^n}}^{t^{n + 1}} (t^{n + 1} - s)^4 \dfrac{\partial^5 u}{\partial t^5}(s) ds}^2_\alpha, \\
&\le \dfrac{1}{(48)^2 \left(\Delta t\right)^2} \int\limits_{t^{n - 1}}^{t^{n +1}} (s - t^{n + 1})^8 ds \int\limits_{\mathclap{t^{n - 1}}}^{t^{n + 1}} \norm[\bigg]{\dfrac{\partial^5 u}{\partial t^5}(s)}^2_\alpha ds, \quad \text{(using $t^{n - 1} < t^{n + 1}$ and Cauchy-Schwarz)} \\
&= \dfrac{2}{81} \Delta t^7 \int\limits_{\mathclap{t^{n - 1}}}^{t^{n + 1}} \norm[\bigg]{\dfrac{\partial^5 u}{\partial t^5}(s)}^2_\alpha ds,
\end{align*}
and now summing both sides over $n = 0$ to $N-1$, we have that:
\begin{equation*}
  \sum\limits_{n = 0}^{N-1} \norm{R^n_u}^2_\alpha \le \dfrac{2}{81} \Delta t^7 \int\limits_0^T \norm[\bigg]{\dfrac{\partial^5 u}{\partial t^5}(s)}^2_\alpha ds = \dfrac{2}{81} \Delta t^7 \norm[\bigg]{\dfrac{\partial^5 u}{\partial t^5}}^2_{L^2(0, T; L^2_\alpha(\Omega))}.
\end{equation*}
Similarly, for any operator $\Phi$ acting on $u$ belonging to an appropriate function space on $\Omega$, using the Cauchy-Schwarz and Chebyshev inequalities, we have that:
\begin{equation*}
  \sum\limits_{n = 0}^{N-1} \norm{\Phi (r^n_u)}^2_\alpha \le \dfrac{\Delta t^3}{252} \int\limits_0^T \norm[\bigg]{\dfrac{\partial^4 (\Phi(u))}{\partial t^4}(s)}^2_\alpha ds = \dfrac{\Delta t^3}{252} \norm[\bigg]{\dfrac{\partial^4 (\Phi(u))}{\partial t^4}}^2_{L^2(0, T; L^2_\alpha(\Omega))},
\end{equation*}
for $u = \{p, E, H\}$ and $n = 0, 1, \dots, N-1$. Finally, using the regularity assumptions for $p$, $E$ and $H$, and $1$- and $2$-norm equivalences, we obtain the required result:
\[
  \norm{e_p^{N - 1}}_{\varepsilon^{-1}} + \norm{e_E^{N - 1}}_{\varepsilon} + \norm{e_H^{N - 1}}_{\mu} \le C \left[ \left(\Delta t\right)^4 + \norm{e_p^0}_{\varepsilon^{-1}} + \norm{e_E^0}_{\varepsilon} + \norm{e_H^0}_{\mu} + \norm{e_p^1}_{\varepsilon^{-1}} + \norm{e_E^1}_{\varepsilon} + \norm{e_H^1}_{\mu} \right]. \qedhere
\]
\end{proof}

\subsection{Error Estimate for Full Discretization}

The full error analysis of the three-field Maxwell's systems with arbitrary order de Rham finite elements in conjunction with TS$_4$ is nearly identical to that for the LF$_4$ case in Section~\labelcref{sec:implicit_lf4}. Like earlier, we again define the errors for $p$, $E$ and $H$ at time $n \Delta t$ under the full discretization as the following:
\begin{alignat}{2}
  e_{p_h}^n &\coloneq p(t^n) - p_h^n &&= \eta^n - \eta_h^n, \label{eqn:p_fullerror_ts4} \\
  e_{E_h}^n &\coloneq E(t^n) - E_h^n &&= \zeta^n - \zeta_h^n, \label{eqn:E_fullerror_ts4} \\
  e_{H_h}^n &\coloneq H(t^n) - H_h^n &&= \xi^n - \xi_h^n, \label{eqn:H_fullerror_ts4}
\end{alignat}
and in which we again have the following definitions for the above newly introduced terms:
\begin{alignat}{3}
  \eta^n &\coloneq p(t^n) - \Pi_h^0 p(t^n), &&\qquad \eta_h^n &&\coloneq p_h^n  - \Pi_h^0 p(t^n), \label{eqn:p_fullerror_sub_ts4} \\
  \zeta^n &\coloneq E(t^n) - \Pi_h^1 E(t^n), &&\qquad \zeta_h^n &&\coloneq E_h^n - \Pi_h^1 E(t^n), \label{eqn:E_fullerror_sub_ts4} \\
  \xi^n &\coloneq H(t^n) - \Pi_h^2 H(t^n), &&\qquad \xi_h^n &&\coloneq H_h^n - \Pi_h^2 H(t^n). \label{eqn:H_fullerror_sub_ts4}
\end{alignat}
For the TS$_4$ scheme as in Equations~\labelcref{eqn:maxwell_p_ts4,eqn:maxwell_E_ts4,eqn:maxwell_H_ts4}, using a de Rham sequence of finite dimensional subspaces of the corresponding function spaces for the spatial discretization of $(p^n, E^n, H^n)$, we obtain the following discrete problem: find $(p_h^n, E_h^n, H_h^n) \in U_h \times V_h \times W_h \subseteq \mathring{H}_{\varepsilon^{-1}}^1 \times \mathring{H}_{\varepsilon}(\curl; \Omega) \times \mathring{H}_{\mu}(\divgn; \Omega)$ such that:
    \begin{subequations}
    \begin{equation}
   \aInnerproduct{\varepsilon^{-1} \dfrac{p_h^{n + 1} - p_h^{n - 1}}{2 \Delta t}}{\widetilde{p}} - \dfrac{1}{6} \aInnerproduct{E_h^{n + 1} +4 E_h^n  + E_h^{n - 1}}{\nabla \widetilde{p}} = 0,  \label{eqn:maxwell_p_ts4_full}
   \end{equation}
\begin{equation}
   \dfrac{1}{6} \aInnerproduct{\nabla \left(p_h^{n + 1} + 4 p_h^n  + p_h^{n - 1} \right)}{\widetilde{E}}  + \aInnerproduct{\varepsilon \dfrac{E_h^{n + 1} - E_h^{n - 1}}{2 \Delta t}}{\widetilde{E}} - \dfrac{1}{6} \aInnerproduct{\nabla \times \left(H_h^{n + 1} + 4 H_h^n  + H_h^{n - 1} \right)}{\widetilde{E}} = 0,  \label{eqn:maxwell_E_ts4_full}
\end{equation}
\begin{equation}
  \aInnerproduct{\mu \dfrac{H_h^{n + 1} - H_h^{n - 1}}{2 \Delta t}}{\widetilde{H}} +  \dfrac{1}{6} \aInnerproduct{\nabla \times \left( E_h^{n + 1} + 4 E_h^n  + E_h^{n - 1} \right)}{\widetilde{H}} = 0.  \label{eqn:maxwell_H_ts4_full}
\end{equation}
for all $(\widetilde{p}, \widetilde{E}, \widetilde{H}) \in U_h \times V_h \times W_h$, $n = 1, \dots, N - 1$. As earlier, we again have the same $L^2$ smoothed projection operators, namely, $\Pi_h^0: \mathring{H}^1_{\varepsilon^{-1}}(\Omega) \longto U_h$, $\Pi_h^1: \mathring{H}_{\varepsilon}(\curl; \Omega) \longto V_h$ and $\Pi_h^2: \mathring{H}_{\mu}(\divgn; \Omega) \longto W_h$, and for the initial conditions, we simply again set:
  \begin{equation}
u_h^0 \coloneq \Pi_h^k u_0 \text{~and~} u_h^1 \coloneq \Pi_h^k u_1, \text{~for~} u = \{p, E, H\} \text{~and~} k = 0,1 \text{~or~} 2 \text{~respectively~}. \label{eqn:initial_ts4}
  \end{equation}
\end{subequations}

With this setup, we now state and prove our theorem for the convergence of the errors in the full discretization of our system of Maxwell's equations using TS$_4$ and arbitrary order de Rham finite elements.

\begin{theorem}[Full Error Estimate]\label{thm:full_error_estmt_ts4}
Let $p \in C^5(0, T; \mathring{H}^1_{\varepsilon^{-1}}(\Omega))$, $E \in C^5(0, T; \mathring{H}_{\varepsilon}(\curl; \Omega))$, and $H \in C^5(0, T; \mathring{H}_{\mu}(\divgn; \Omega))$ be the solution to the variational formulation of the Maxwell's equations as in Equations~\labelcref{eqn:maxwell_p_wf,eqn:maxwell_E_wf,eqn:maxwell_H_wf} with sufficient regularity, and let $(p_h^n, E_h^n, H_h^n)$ be the solution of the fully discretized Maxwell's equations using TS$_4$ scheme as in Equations~\labelcref{eqn:maxwell_p_ts4_full,eqn:maxwell_E_ts4_full,eqn:maxwell_H_ts4_full,eqn:initial_lf4}. With Assumption~\labelcref{assume:innerproducts}, if the time step $\Delta t > 0$ and the mesh parameter $h > 0$ are sufficiently small, then there exists a positive bounded constant $C$ independent of both $\Delta t$ and $h$ such that the following error estimate holds:
\[
  \norm{e_{p_h}^{N - 1}}_{\varepsilon^{-1}} + \norm{e_{E_h}^{N - 1}}_{\varepsilon} + \norm{e_{H_h}^{N - 1}}_{\mu} \le C \left[ \Delta t^4 + h^r + h^r \Delta t^4 \right],
\]
where the finite element subspaces $U_h$, $V_h$ and $W_h$ are each spanned by their respective Whitney form basis of polynomial order $r \ge 1$.
\end{theorem}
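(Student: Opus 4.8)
The plan is to follow, almost step for step, the full-error argument for the LF$_4$ scheme in Theorem~\labelcref{thm:full_error_estmt_lf4}, with the midpoint averaging replaced by the Simpson-type averaging $\tfrac{1}{6}(u^{n+1}+4u^n+u^{n-1})$ and the pure quadratic energy replaced by the cross-term energy of Theorem~\labelcref{thm:dscrt_enrgy_cnsrvtn_ts4}. First I would subtract the fully discrete equations~\labelcref{eqn:maxwell_p_ts4_full,eqn:maxwell_E_ts4_full,eqn:maxwell_H_ts4_full} from the remainder-form identities~\labelcref{eqn:remainder_p_ts4,eqn:remainder_E_ts4,eqn:remainder_H_ts4} established in Theorem~\labelcref{thm:dscrt_error_estmt_ts4}, obtaining weak equations for $e_{p_h}^n$, $e_{E_h}^n$, $e_{H_h}^n$. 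Inserting the splittings $e_{p_h}^n = \eta^n - \eta_h^n$ etc.\ from Equations~\labelcref{eqn:p_fullerror_ts4,eqn:E_fullerror_ts4,eqn:H_fullerror_ts4} moves the interpolation parts $\eta^n,\zeta^n,\xi^n$ to the right-hand side as sources and keeps the discrete parts $\eta_h^n,\zeta_h^n,\xi_h^n$ on the left.

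Next I would test with $\widetilde{p} = 2\Delta t(\eta_h^{n+1}+4\eta_h^n+\eta_h^{n-1})$ and the analogous $\widetilde{E},\widetilde{H}$ built from $\zeta_h$ and $\xi_h$, exactly as the semidiscrete proof of Theorem~\labelcref{thm:dscrt_error_estmt_ts4} tests with the error itself. The de Rham inclusions $\nabla U_h \subseteq V_h$ and $\nabla\times V_h \subseteq W_h$ are essential here: they make these test functions admissible and force the antisymmetric $p$--$E$ and $E$--$H$ coupling terms to cancel upon adding the three equations, precisely as in Theorem~\labelcref{thm:dscrt_enrgy_cnsrvtn_ts4}. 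After this cancellation the left-hand side collapses to the telescoping energy increments $\norm{\eta_h^{n+1}+\eta_h^n}^2_{\varepsilon^{-1}} + 2\ainnerproduct{\eta_h^{n+1}}{\eta_h^n}_{\varepsilon^{-1}}$ minus their value at level $n$ (together with the $\zeta_h$ and $\xi_h$ counterparts), while the right-hand side carries the diagonal interpolation differences $\ainnerproduct{\varepsilon^{-1}(\eta^{n+1}-\eta^{n-1})}{\eta_h^{n+1}+\eta_h^n}$ and the Taylor-remainder sources built from $R_u^n$ and $r_u^n$.

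I would then Taylor-expand each interpolation difference $\eta^{n+1}-\eta^{n-1}$ about $t^n$, as in Theorem~\labelcref{thm:dscrt_error_estmt_ts4}, so that it becomes $2\Delta t\,(I-\Pi_h^0)\tfrac{\partial p}{\partial t}(t^n) + \tfrac{\Delta t^3}{3}(I-\Pi_h^0)\tfrac{\partial^3 p}{\partial t^3}(t^n)$ plus a remainder involving $(I-\Pi_h^0)R_p^n$, and similarly for $E,H$. Applying Cauchy--Schwarz and AM--GM separates FEEC-projectible factors (to be bounded later by $h^r$) from the discrete-error norms $\norm{\eta_h^{n\pm1}}^2_{\varepsilon^{-1}}$ that Gronwall will absorb. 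Summing over $n=1,\dots,N-2$, the left telescopes to level $N-1$, and the initialization~\labelcref{eqn:initial_ts4}, namely $u_h^0 = \Pi_h^k u_0$ and $u_h^1=\Pi_h^k u_1$, forces $\eta_h^0=\eta_h^1=0$ (and likewise for $\zeta_h,\xi_h$), so the endpoint contributions at $n=0,1$ drop out. Invoking Assumption~\labelcref{assume:innerproducts} together with Cauchy--Schwarz to discard the indefinite cross inner products on the left --- the same maneuver used at the close of Theorem~\labelcref{thm:dscrt_error_estmt_ts4} --- leaves a coercive lower bound $\norm{\eta_h^{N-1}}^2_{\varepsilon^{-1}} + \norm{\zeta_h^{N-1}}^2_{\varepsilon} + \norm{\xi_h^{N-1}}^2_{\mu}$. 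Applying the discrete Gronwall inequality of Lemma~\labelcref{lemma:gronwall_dscrt} (with $\Delta t$ small enough for the Gronwall constant to be admissible), then bounding every projection factor by $h^r$ through Theorem~\labelcref{thm:arfawismoothedprojection} and every remainder sum by the $\Delta t^8$ estimates already obtained in Theorem~\labelcref{thm:dscrt_error_estmt_ts4}, yields $\norm{\eta_h^{N-1}}^2_{\varepsilon^{-1}} + \norm{\zeta_h^{N-1}}^2_{\varepsilon} + \norm{\xi_h^{N-1}}^2_{\mu} \le \widetilde{C}\,[h^{2r} + h^{2r}\Delta t^8 + \Delta t^8]$. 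A final triangle inequality $\norm{e_{p_h}^{N-1}}_{\varepsilon^{-1}} \le \norm{\eta^{N-1}}_{\varepsilon^{-1}} + \norm{\eta_h^{N-1}}_{\varepsilon^{-1}}$, with $\norm{\eta^{N-1}}_{\varepsilon^{-1}} \le Ch^r$ from Theorem~\labelcref{thm:arfawismoothedprojection} and the boundedness of the solution furnished by Theorem~\labelcref{thm:dscrt_enrgy_cnsrvtn_ts4}, together with $1$- and $2$-norm equivalence to pass to first powers, delivers the claimed bound $C[\Delta t^4 + h^r + h^r\Delta t^4]$.

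The main obstacle I anticipate is the treatment of the indefinite cross inner-product terms $2\ainnerproduct{\eta_h^{N-1}}{\eta_h^{N-2}}_{\varepsilon^{-1}}$ (and their $\zeta_h,\xi_h$ analogues) produced by the TS$_4$ energy. Unlike the LF$_4$ case, where the discrete energy is a sum of pure squared norms and the telescoped left-hand side is manifestly coercive, here coercivity is not automatic: the surviving boundary cross terms must be genuinely controlled rather than merely formally cancelled. This is exactly where Assumption~\labelcref{assume:innerproducts} enters, guaranteeing nonnegativity of these inner products so that Cauchy--Schwarz isolates a positive combination of the level-$(N-1)$ norms; making this step rigorous, and tracking that the two required initial levels $n=0,1$ contribute only projection errors of order $h^r$, is the delicate part that distinguishes the TS$_4$ analysis from the cleaner LF$_4$ bookkeeping.
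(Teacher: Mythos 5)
Your proposal is correct and follows essentially the same route as the paper's own proof: subtracting the fully discrete TS$_4$ equations from the remainder identities of Theorem~\labelcref{thm:dscrt_error_estmt_ts4}, testing with the Simpson-weighted discrete errors (the paper uses $-2\Delta t(\eta_h^{n+1}+4\eta_h^n+\eta_h^{n-1})$, matching your choice up to the sign convention induced by $e_{p_h}^n=\eta^n-\eta_h^n$), invoking the de Rham inclusions for cancellation, handling the indefinite cross terms of the TS$_4$ energy via Assumption~\labelcref{assume:innerproducts} and Cauchy--Schwarz, and closing with discrete Gronwall, the $h^{2r}$ projection bounds of Theorem~\labelcref{thm:arfawismoothedprojection}, the $\Delta t^8$ remainder sums, and the final triangle inequality with norm equivalence. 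You also correctly identified the one genuine subtlety distinguishing this from the LF$_4$ bookkeeping, namely the non-automatic coercivity of the telescoped cross-term energy, which is exactly where the paper deploys the assumption.
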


\begin{proof}
 First, we shall subtract the set of equations for the full discretization as in Equations~\labelcref{eqn:maxwell_p_ts4_full,eqn:maxwell_E_ts4_full,eqn:maxwell_H_ts4_full} from Equations~\labelcref{eqn:remainder_p_ts4,eqn:remainder_E_ts4,eqn:remainder_H_ts4},  and then use the error terms in Equations~\labelcref{eqn:p_fullerror_ts4,eqn:E_fullerror_ts4,eqn:H_fullerror_ts4} and thereby obtain:
    \begin{equation*}
   \aInnerproduct{\varepsilon^{-1} \dfrac{e_{p_h}^{n + 1} - e_{p_h}^{n - 1}}{2 \Delta t}}{\widetilde{p}} - \dfrac{1}{6} \aInnerproduct{e_{E_h}^{n + 1} +4 e_{E_h}^n  + e_{E_h}^{n - 1}}{\nabla \widetilde{p}} = \aInnerproduct{\varepsilon^{-1} R_p^n + \dfrac{\Delta t^2}{6} \nabla \cdot r_E^n}{\widetilde{p}}, 
   \end{equation*}
\begin{equation*}
\begin{split}
   \dfrac{1}{6} \aInnerproduct{\nabla \left(e_{p_h}^{n + 1} + 4 e_{p_h}^n  + e_{p_h}^{n - 1} \right)}{\widetilde{E}}  + \aInnerproduct{\varepsilon \dfrac{e_{E_h}^{n + 1} - e_{E_h}^{n - 1}}{2 \Delta t}}{\widetilde{E}} - \dfrac{1}{6} \aInnerproduct{\nabla \times \left(e_{H_h}^{n + 1} + 4 e_{H_h}^n  + e_{H_h}^{n - 1} \right)}{\widetilde{E}} \\ = \aInnerproduct{\varepsilon R_E^n + \dfrac{\Delta t^2}{6} \left(\nabla r_p^n - \nabla \times r_H^n \right)}{\widetilde{E}}, 
   \end{split}
\end{equation*}
\begin{equation*}
  \aInnerproduct{\mu \dfrac{e_{H_h}^{n + 1} - e_{H_h}^{n - 1}}{2 \Delta t}}{\widetilde{H}} +  \dfrac{1}{6} \aInnerproduct{\nabla \times \left( e_{E_h}^{n + 1} + 4 e_{E_h}^n  + e_{E_h}^{n - 1} \right)}{\widetilde{H}} = \aInnerproduct{\mu R_H^n + \dfrac{\Delta t^2}{6} \nabla \times r_E^n}{\widetilde{H}}. 
\end{equation*}
 Next, using the values of the error terms $e_{p_h}^n$, $e_{E_h}^n$ and $e_{H_h}^n$ as in Equations~\labelcref{eqn:p_fullerror_ts4,eqn:E_fullerror_ts4,eqn:H_fullerror_ts4} in the above equations, we get:
     \begin{multline*}
   \aInnerproduct{\varepsilon^{-1} \dfrac{\left(\eta^{n + 1} - \eta^{n - 1}\right) - \left( \eta^{n + 1}_h - \eta^{n - 1}_h \right)}{2 \Delta t}}{\widetilde{p}} - \dfrac{1}{6} \aInnerproduct{\left(\zeta^{n + 1} +4 \zeta^n  + \zeta^{n - 1}\right) - \left(\zeta_h^{n + 1} +4 \zeta_h^n  + \zeta_h^{n - 1}\right)}{\nabla \widetilde{p}} \\ = \aInnerproduct{\varepsilon^{-1} R_p^n + \dfrac{\Delta t^2}{6} \nabla \cdot r_E^n}{\widetilde{p}}, 
   \end{multline*}
\begin{multline*}
   \dfrac{1}{6} \aInnerproduct{\nabla \left(\eta^{n + 1} + 4 \eta^n  + \eta^{n - 1} \right) - \nabla \left(\eta_h^{n + 1} + 4 \eta_h^n  + \eta_h^{n - 1} \right)}{\widetilde{E}}  + \aInnerproduct{\varepsilon \dfrac{\left( \zeta^{n + 1} - \zeta^{n - 1} \right) - \left(\zeta^{n + 1}_h - \zeta^{n - 1}_h \right)}{2 \Delta t}}{\widetilde{E}} \\ - \dfrac{1}{6} \aInnerproduct{\left(\xi^{n + 1} + 4 \xi^n  + \xi^{n - 1} \right) - \left(\xi_h^{n + 1} + 4 \xi_h^n  + \xi_h^{n - 1} \right)}{\nabla \times \widetilde{E}} = \aInnerproduct{\varepsilon R_E^n + \dfrac{\Delta t^2}{6} \left(\nabla r_p^n - \nabla \times r_H^n \right)}{\widetilde{E}}, 
\end{multline*}
\begin{multline*}
  \aInnerproduct{\mu \dfrac{\left(\xi^{n + 1} - \xi^{n - 1} \right) - \left( \xi^{n + 1}_h - \xi^{n - 1}_h \right)}{2 \Delta t}}{\widetilde{H}} +  \dfrac{1}{6} \aInnerproduct{\nabla \times \left(\zeta^{n + 1} + 4 \zeta^n  + \zeta^{n - 1} \right) - \nabla \times \left(\zeta_h^{n + 1} + 4 \zeta_h^n  + \zeta_h^{n - 1} \right)}{\widetilde{H}} \\ = \aInnerproduct{\mu R_H^n + \dfrac{\Delta t^2}{6} \nabla \times r_E^n}{\widetilde{H}}. 
\end{multline*}
Since these equations are true for all $(\widetilde{p}, \widetilde{E}, \widetilde{H}) \in U_h \times V_h \times W_h$, we choose $\widetilde{p} = -2 \Delta t \left( \eta_h^{n + 1} + 4 \eta_h^n + \eta_h^{n - 1}\right)$, $\widetilde{E} = -2 \Delta t \left( \zeta_h^{n +1} +4 \zeta_h^n + \zeta_h^{n -1} \right)$ and $\widetilde{H} = -2 \Delta t \left( \xi_h^{n + 1} + 4 \xi_h^n + \xi_h^{n - 1} \right)$ and using the fact that $\nabla U_h \subseteq V_h$ and $\nabla \times V_h \subseteq W_h$, we obtain that:
\begin{multline}
 \ainnerproduct{\varepsilon^{-1} \left( \eta^{n + 1}_h - \eta^{n - 1}_h \right)}{\eta^{n + 1}_h + 4 \eta^n_h + \eta^{n -1}_h} + \ainnerproduct{\varepsilon \left( \zeta^{n + 1}_h - \zeta^{n - 1}_h \right)}{\zeta^{n + 1}_h + 4 \zeta^n_h + \zeta^{n - 1}_h} \\ + \ainnerproduct{\mu \left( \xi^{n + 1}_h - \xi^{n - 1}_h \right)}{ \xi^{n + 1}_h + 4 \xi^n_h + \xi^{n - 1}_h} =  \ainnerproduct{\varepsilon^{-1} \left( \eta^{n + 1} - \eta^{n - 1} \right)}{\eta^{n + 1}_h + 4 \eta^n_h + \eta^{n -1}_h} \\ + \ainnerproduct{\varepsilon \left( \zeta^{n + 1} - \zeta^{n - 1} \right)}{\zeta^{n + 1}_h + 4 \zeta^n_h + \zeta^{n - 1}_h} + \ainnerproduct{\mu \left( \xi^{n + 1} - \xi^{n - 1} \right)}{ \xi^{n + 1}_h + 4 \xi^n_h + \xi^{n - 1}_h}  \, \\ +
2 \Delta t \left[\ainnerproduct{-\varepsilon^{-1} R_p^n + \dfrac{\Delta t^2}{6} \nabla \cdot r_E^n}{\eta_h^{n + 1} + 4 \eta_h^n + \eta_h^{n - 1}} \right. \\ + \left. \ainnerproduct{-\varepsilon R_E^n + \dfrac{\Delta t^2}{6} \left(\nabla r_p^n - \nabla \times r_H^n \right)}{\zeta_h^{n +1} +4 \zeta_h^n + \zeta_h^{n -1}} + \ainnerproduct{-\mu R_H^n + \dfrac{\Delta t^2}{6} \nabla \times r_E^n}{\xi_h^{n + 1} + 4 \xi_h^n + \xi_h^{n - 1}} \right]. \label{eqn:suberror_p+E+H_ts4}
\end{multline}
Consider now that $\varepsilon^{-1} \left(\eta^{n + 1} - \eta^{n - 1}\right) = \varepsilon^{-1} \left(I - \Pi_h^0\right) \left( p(t^{n + 1}) - p(t^{n - 1})\right)$ by Equation~\eqref{eqn:p_fullerror_sub_ts4}. Using the Taylor theorem with remainder as in Theorem~\ref{thm:dscrt_error_estmt_ts4}, applying the Cauchy-Schwarz, AM-GM, and triangle inequalities, we have the following inequality:
\begin{align*}
\ainnerproduct{\varepsilon^{-1} \left( \eta^{n + 1} - \eta^{n - 1} \right)}{\eta^{n + 1}_h + 4 \eta_h^n + \eta^{n - 1}_h} & = 2 \Delta t \ainnerproduct{\varepsilon^{-1} \left( I - \Pi_h^0 \right) \left( \dfrac{\partial p}{\partial t} (t^n) +  \dfrac{\Delta t^2}{6} \dfrac{\partial^3 p}{\partial t^3 (t^n) }\right)}{\eta^{n + 1}_h + 4 \eta_h^n + \eta^{n - 1}_h} \\ & + 2 \Delta t \ainnerproduct{\varepsilon^{-1} \left( I - \Pi_h^0 \right) R_p^n}{\eta^{n + \frac{1}{2}}_h + \eta^{n - \frac{1}{2}}_h} \\ 
 & \le 18 \Delta t \bigg[ \norm[\bigg]{(I - \Pi_h^0) \dfrac{\partial p}{\partial t}(t^n)}^2_{\varepsilon^{-1}} \!\! + \dfrac{\Delta t^4}{36} \norm[\bigg]{(I - \Pi_h^0) \dfrac{\partial^3 p}{\partial t^3}(t^n)}^2_{\varepsilon^{-1}} \!\! \\ & +  \norm[\bigg]{(I - \Pi_h^0) R_p^n}^2_{\varepsilon^{-1}} \bigg] \! + 3 \Delta t \bigg[ \norm{\eta_h^{n + 1}}^2_{\varepsilon^{-1}} + \norm{\eta_h^n}^2_{\varepsilon^{-1}} + \norm{\eta_h^{n -1}}^2_{\varepsilon^{-1}} \bigg].
\end{align*}
Similarly, using Equations~\labelcref{eqn:E_fullerror_sub_ts4,eqn:H_fullerror_sub_ts4} for the full error terms for $E$ and $H$, we obtain the following:
\begin{gather*}
\begin{split}
\ainnerproduct{\varepsilon \left( \zeta^{n +1} - \zeta^{n -1} \right)}{\zeta^{n + 1}_h + 4 \zeta^n_h + \zeta^{n - 1}_h} \le 18 \Delta t \bigg[ \norm[\bigg]{(I - \Pi_h^1) \dfrac{\partial E}{\partial t}(t^n)}^2_{\varepsilon} \!\! + \dfrac{\Delta t^4}{36} \norm[\bigg]{(I - \Pi_h^1) \dfrac{\partial^3 E}{\partial t^3}(t^n)}^2_{\varepsilon} \!\! \\  +  \norm[\bigg]{(I - \Pi_h^1) R_E^n}^2_{\varepsilon} \bigg] \! + 3 \Delta t \bigg[ \norm{\zeta_h^{n + 1}}^2_{\varepsilon} + \norm{\zeta_h^n}^2_{\varepsilon} + \norm{\zeta_h^{n - 1}}^2_{\varepsilon} \bigg],
\end{split} \\ 
\begin{split}
\ainnerproduct{\mu \left( \xi^{n + 1} - \xi^{n - 1} \right)}{\xi^{n + 1}_h + 4 \xi^n_h + \xi^{n - 1}_h} \le 18 \Delta t \bigg[ \norm[\bigg]{(I - \Pi_h^2) \dfrac{\partial H}{\partial t}(t^{n})}^2_{\mu} \!\! + \dfrac{\Delta t^4}{36} \norm[\bigg]{(I - \Pi_h^2) \dfrac{\partial^3 H}{\partial t^3}(t^n)}^2_{\mu} \!\! \\  +  \norm[\bigg]{(I - \Pi_h^2) R_H^n}^2_{\mu} \bigg] \! + 3 \Delta t \bigg[ \norm{\xi_h^{n + 1}}^2_{\mu} + \norm{\xi_h^n}^2_{\mu} +  \norm{\xi_h^{n - 1}}^2_{\mu} \bigg].
\end{split}
\end{gather*}
Using these inequalities for $\eta$, $\zeta$ and $\xi$ in Equation~\eqref{eqn:suberror_p+E+H_ts4}, we thus obtain the following estimate:
\begin{multline*}
\norm{\eta_h^{n + 1}}^2_{\varepsilon^{-1}} + \norm{\eta_h^n}^2_{\varepsilon^{-1}} + 4 \ainnerproduct{\eta_h^{n + 1}}{\eta_h^n}_{\varepsilon^{-1}} - \norm{\eta_h^n}^2_{\varepsilon^{-1}} - \norm{\eta_h^{n -1}}^2_{\varepsilon^{-1}} - 4 \ainnerproduct{\eta_h^n}{\eta_h^{n - 1}}_{\varepsilon^{-1}} + \norm{\zeta_h^{n + 1}}^2_{\varepsilon} + \norm{\zeta_h^n}^2_{\varepsilon} \\ + 4 \ainnerproduct{\zeta_h^{n + 1}}{\zeta_h^n}_\varepsilon - \norm{\zeta_h^n}^2_{\varepsilon}  - \norm{\zeta_h^{n - 1}}^2_{\varepsilon} - 4 \ainnerproduct{\zeta_h^n}{\zeta_h^{n-1}}_\varepsilon + \norm{\xi_h^{n+1}}^2_{\mu} + \norm{\xi_h^n}^2_{\mu} + 4 \ainnerproduct{\xi_h^{n + 1}}{\xi_h^n}_\mu - \norm{\xi_h^n}^2_{\mu} - \norm{\xi_h^{n-1}}^2_{\mu} \\ - 4 \ainnerproduct{\xi_h^n}{\xi_h^{n-1}}_\mu \le
18 \Delta t \bigg[ \norm[\bigg]{(I - \Pi_h^0) \dfrac{\partial p}{\partial t}(t^n)}^2_{\varepsilon^{-1}} + \norm[\bigg]{(I - \Pi_h^1) \dfrac{\partial E}{\partial t}(t^n)}^2_{\varepsilon} + \norm[\bigg]{(I - \Pi_h^2) \dfrac{\partial H}{\partial t}(t^n)}^2_\mu \\ + \dfrac{\Delta t^4}{36} \left(\norm[\bigg]{(I - \Pi_h^0) \dfrac{\partial^3 p}{\partial t^3}(t^n)}^2_{\varepsilon^{-1}} + \norm[\bigg]{(I - \Pi_h^1) \dfrac{\partial^3 E}{\partial t^3}(t^n)}^2_{\varepsilon} + \norm[\bigg]{(I - \Pi_h^2) \dfrac{\partial^3 H}{\partial t^3}(t^n)}^2_{\mu} \right) \\ + \norm[\bigg]{(I - \Pi_h^0) R_p^n}^2_{\varepsilon^{-1}} + \norm[\bigg]{(I - \Pi_h^1) R_E^n}^2_{\varepsilon} + \norm[\bigg]{(I - \Pi_h^2) R_H^n}^2_\mu + \norm{R_p^n}^2_{\varepsilon^{-1}} + \norm{R_E^n}^2_{\varepsilon} + \norm{R_H^n}^2_{\mu} \\ + \dfrac{\Delta t^4}{36} \left( \norm{\nabla r_p^n} ^2_{\varepsilon^{-1}} + \norm{\nabla \cdot r_E^n}^2_{\varepsilon} + \varepsilon^{-1} \mu^{-1} \left(\norm{\nabla \times r_E^n}^2_{\varepsilon} + \norm{\nabla \times r_H^n}^2_{\mu} \right)\right) \bigg]\\ + 6 \Delta t \left[\norm{\eta_h^{n + 1}}^2_{\varepsilon^{-1}} + \norm{\eta_h^n}^2_{\varepsilon^{-1}} + \norm{\eta_h^{n - 1}}^2_{\varepsilon^{-1}} + \norm{\zeta_h^{n + 1}}^2_{\varepsilon} + \norm{\zeta_h^n}^2_{\varepsilon} + \norm{\zeta_h^{n - 1}}^2_{\varepsilon} + \norm{\xi_h^{n + 1}}^2_\mu + \norm{\xi_h^n}^2_\mu + \norm{\xi_h^{n - 1}}^2_\mu \right].
\end{multline*}
Summing from $n = 1$ to $N-2$ and using the Cauchy-Schwarz inequality along with non-negativity of inner products $\aInnerproduct{\cdot}{\cdot}_\alpha$ for $\alpha = \{\varepsilon^{-1}, \varepsilon, \mu\}$, we have that:
\begin{multline*}
\norm{\eta_h^{N - 1}}^2_{\varepsilon^{-1}} + \norm{\zeta_h^{N - 1}}^2_{\varepsilon} + \norm{\xi_h^{N - 1}}^2_{\mu} \le 3 \left(\norm{\eta_h^0}^2_{\varepsilon^{-1}} + \norm{\zeta_h^0}^2_\varepsilon + \norm{\xi_h^0}^2_\mu + \norm{\eta_h^1}^2_{\varepsilon^{-1}} + \norm{\zeta_h^1}^2_\varepsilon + \norm{\xi_h^1}^2_\mu \right) \\ + 18 \Delta t \sum\limits_{n=0}^{N-1} \bigg[ \norm[\bigg]{(I - \Pi_h^0) \dfrac{\partial p}{\partial t}(t^n)}^2_{\varepsilon^{-1}} + \norm[\bigg]{(I - \Pi_h^1) \dfrac{\partial E}{\partial t}(t^n)}^2_{\varepsilon} + \norm[\bigg]{(I - \Pi_h^2) \dfrac{\partial H}{\partial t}(t^n)}^2_\mu \\ + \dfrac{\Delta t^4}{36} \left(\norm[\bigg]{(I - \Pi_h^0) \dfrac{\partial^3 p}{\partial t^3}(t^n)}^2_{\varepsilon^{-1}} + \norm[\bigg]{(I - \Pi_h^1) \dfrac{\partial^3 E}{\partial t^3}(t^n)}^2_{\varepsilon} + \norm[\bigg]{(I - \Pi_h^2) \dfrac{\partial^3 H}{\partial t^3}(t^n)}^2_{\mu} \right) + \norm[\bigg]{(I - \Pi_h^0) R_p^n}^2_{\varepsilon^{-1}} \\ + \norm[\bigg]{(I - \Pi_h^1) R_E^n}^2_{\varepsilon} + \norm[\bigg]{(I - \Pi_h^2) R_H^n}^2_\mu + \norm{R_p^n}^2_{\varepsilon^{-1}} + \norm{R_E^n}^2_{\varepsilon} + \norm{R_H^n}^2_{\mu} + \dfrac{\Delta t^4}{36} \left( \norm{\nabla r_p^n} ^2_{\varepsilon^{-1}} + \norm{\nabla \cdot r_E^n}^2_{\varepsilon} \right. \\ \left. + \varepsilon^{-1} \mu^{-1} \left(\norm{\nabla \times r_E^n}^2_{\varepsilon} + \norm{\nabla \times r_H^n}^2_{\mu} \right)\right) \bigg] + 18 \Delta t  \sum\limits_{n=0}^{N-1} \left[\norm{\eta_h^n}^2_{\varepsilon^{-1}} + \norm{\zeta_h^n}^2_{\varepsilon} + \norm{\xi_h^n}^2_\mu \right].
\end{multline*}
Now, applying the discrete Gronwall inequality with $\Delta t < 1/24$, we get that:
\begin{multline*}
\norm{\eta_h^{N - 1}}^2_{\varepsilon^{-1}} + \norm{\zeta_h^{N - 1}}^2_{\varepsilon} + \norm{\xi_h^{N - 1}}^2_{\mu} \le \Bigg[3 \left(\norm{\eta_h^0}^2_{\varepsilon^{-1}} + \norm{\zeta_h^0}^2_\varepsilon + \norm{\xi_h^0}^2_\mu + \norm{\eta_h^1}^2_{\varepsilon^{-1}} + \norm{\zeta_h^1}^2_\varepsilon + \norm{\xi_h^1}^2_\mu \right) \\ + 18 \Delta t \sum\limits_{n=0}^{N-1} \bigg[ \norm[\bigg]{(I - \Pi_h^0) \dfrac{\partial p}{\partial t}(t^n)}^2_{\varepsilon^{-1}} + \norm[\bigg]{(I - \Pi_h^1) \dfrac{\partial E}{\partial t}(t^n)}^2_{\varepsilon} + \norm[\bigg]{(I - \Pi_h^2) \dfrac{\partial H}{\partial t}(t^n)}^2_\mu \\ + \dfrac{\Delta t^4}{36} \left(\norm[\bigg]{(I - \Pi_h^0) \dfrac{\partial^3 p}{\partial t^3}(t^n)}^2_{\varepsilon^{-1}} + \norm[\bigg]{(I - \Pi_h^1) \dfrac{\partial^3 E}{\partial t^3}(t^n)}^2_{\varepsilon} + \norm[\bigg]{(I - \Pi_h^2) \dfrac{\partial^3 H}{\partial t^3}(t^n)}^2_{\mu} \right) + \norm[\bigg]{(I - \Pi_h^0) R_p^n}^2_{\varepsilon^{-1}} \\ + \norm[\bigg]{(I - \Pi_h^1) R_E^n}^2_{\varepsilon} + \norm[\bigg]{(I - \Pi_h^2) R_H^n}^2_\mu + \norm{R_p^n}^2_{\varepsilon^{-1}} + \norm{R_E^n}^2_{\varepsilon} + \norm{R_H^n}^2_{\mu} + \dfrac{\Delta t^4}{36} \left( \norm{\nabla r_p^n} ^2_{\varepsilon^{-1}} + \norm{\nabla \cdot r_E^n}^2_{\varepsilon} \right. \\ \left. + \varepsilon^{-1} \mu^{-1} \left(\norm{\nabla \times r_E^n}^2_{\varepsilon} + \norm{\nabla \times r_H^n}^2_{\mu} \right)\right) \bigg] \Bigg] \exp \left(36 T \right).
\end{multline*}
Using our estimates for the Taylor remainders from Theorem~\labelcref{thm:dscrt_error_estmt_lf4} for the terms on the right-hand side of the above inequality, we further get that:
\begin{multline*}
 18  \Delta t \sum\limits_{n = 0}^{N-1} \Big[ \norm{R^n_p}^2_{\varepsilon^{-1}} + \norm{R^n_E}^2_{\varepsilon} + \norm{R^n_H}^2_\mu \Big] \le \\
 \dfrac{4}{9} \Delta t^8 \Bigg[ \norm[\bigg]{\dfrac{\partial^5 p}{\partial t^5}}^2_{L^2(0, T; L^2_{\varepsilon^{-1}}(\Omega))} \!\! + \, \norm[\bigg]{\dfrac{\partial^5 E}{\partial t^5}}^2_{L^2(0, T; L^2_\varepsilon(\Omega))} \!\! + \, \norm[\bigg]{\dfrac{\partial^5 H}{\partial t^5}}^2_{L^2(0, T; L^2_\mu(\Omega))} \Bigg],
\end{multline*}
and that:
\begin{multline*}
\dfrac{\Delta t^5}{2} \sum\limits_{n = 0}^N \left[ \norm{\nabla r_p^n} ^2_{\varepsilon^{-1}} + \norm{\nabla \cdot r_E^n}^2_{\varepsilon} + \varepsilon^{-1} \mu^{-1} \left(\norm{\nabla \times r_E^n}^2_{\varepsilon} + \norm{\nabla \times r_H^n}^2_{\mu} \right)\right] \\ \le \dfrac{\Delta t^8}{504} \left[ \norm[\bigg]{\dfrac{\partial^4 \left(\nabla p \right)}{\partial t^4}}^2_{L^2(0, T; L^2_{\varepsilon^{-1}}(\Omega))} \!\! + 
  \norm[\bigg]{\dfrac{\partial^4 (\nabla \cdot E)}{\partial t^4}}^2_{L^2(0, T; L^2_\varepsilon(\Omega))} \!\! \right. \\ \left.+ \varepsilon^{-1} \mu^{-1} \left( \norm[\bigg]{\dfrac{\partial^4 (\nabla \times E)}{\partial t^4}}^2_{L^2(0, T; L^2_{\varepsilon}(\Omega))} \!\! + \norm[\bigg]{\dfrac{\partial^4 (\nabla \times H)}{\partial t^4}}^2_{L^2(0, T; L^2_\mu(\Omega))}\right)\right].
\end{multline*}
Now, for $p \in \mathring{H}_{\varepsilon^{-1}}^1(\Omega)$, $E \in \mathring{H}_\varepsilon(\curl; \Omega)$ and $H \in \mathring{H}_\mu(\divgn; \Omega)$, there exists positive bounded constants $C_{1, u}$, $C_{2, u}$, $C_{3, u}$ for $u = \{p, E, H\}$ such that we have the following error bounds for the $L^2$ projections:
\[
  \norm[\bigg]{(I - \Pi_h^k) \dfrac{\partial u}{\partial t}(t^n)}_\alpha \le C_{1, u} h^r \norm[\bigg]{\dfrac{\partial u}{\partial t}(t^n)}_\alpha,  \norm[\bigg]{(I - \Pi_h^k) \dfrac{\partial^3 u}{\partial t^3}(t^n)}_\alpha \le C_{2, u} h^r \norm[\bigg]{\dfrac{\partial^3 u}{\partial t^3}(t^n)}_\alpha,
\]
\[
  \norm[\bigg]{(I - \Pi_h^0) R_p^n}_\alpha \le C_{3, p} h^r \norm[\bigg]{R_p^n}_\alpha,  
\]
where $k = 0, 1, 2$ and $\alpha = \{\varepsilon^{-1}, \varepsilon, \mu\}$, respectively.
  Set $C_0 \coloneq \max\limits_{u \in \{p, E, H\}} \{C_{1, u}, C_{2, u}, C_{3, u} \}$. We therefore have that:
 \begin{multline*}
  18  \Delta t \sum\limits_{n = 0}^{N-1} \left[ \norm[\bigg]{(I - \Pi_h^0) \dfrac{\partial p}{\partial t}(t^n)}^2_{\varepsilon^{-1}} \!\! + \norm[\bigg]{(I - \Pi_h^1) \dfrac{\partial E}{\partial t}(t^n)}^2_{\varepsilon} \!\! + \norm[\bigg]{(I - \Pi_h^2) \dfrac{\partial H}{\partial t}(t^n)}^2_\mu \right] \\
\le 18 C_0 h^{2 r} \sum\limits_{n = 0}^{N-1} \Delta t \left[ \norm[\bigg]{\dfrac{\partial p}{\partial t}(t^n)}^2_{\varepsilon^{-1}} \!\! + \norm[\bigg]{\dfrac{\partial E}{\partial t}(t^n)}^2_{\varepsilon} \!\! + \norm[\bigg]{\dfrac{\partial H}{\partial t}(t^n)}^2_{\mu} \right] \\
\le 18 C_0 h^{2 r} \int\limits_0^T \left[ \norm[\bigg]{\dfrac{\partial p}{\partial t}(t^n)}^2_{\varepsilon^{-1}} \!\! + \norm[\bigg]{\dfrac{\partial E}{\partial t}(t^n)}^2_{\varepsilon} \!\! + \norm[\bigg]{\dfrac{\partial H}{\partial t}(t^n)}^2_{\mu} \right] dt \\
= 18 C_0 h^{2 r} \left[ \norm[\bigg]{\dfrac{\partial p}{\partial t}}^2_{L^2(0, T; L^2_{\varepsilon^{-1}}(\Omega))} \!\! + \norm[\bigg]{\dfrac{\partial E}{\partial t}}^2_{L^2(0, T; L^2_{\varepsilon}(\Omega))} \!\! + \norm[\bigg]{\dfrac{\partial H}{\partial t}}^2_{L^2(0, T; L^2_{\mu}(\Omega))} \right],
\end{multline*}
 \begin{multline*}
   \dfrac{\Delta t^5}{2} \sum\limits_{n = 0}^{N-1} \left[ \norm[\bigg]{(I - \Pi_h^0) \dfrac{\partial^3 p}{\partial t^3}(t^n)}^2_{\varepsilon^{-1}} \!\! + \norm[\bigg]{(I - \Pi_h^1) \dfrac{\partial^3 E}{\partial t^3}(t^n)}^2_{\varepsilon} \!\! + \norm[\bigg]{(I - \Pi_h^2) \dfrac{\partial^3 H}{\partial t^3}(t^n)}^2_\mu \right] \\
\le C_0 h^{2 r} \sum\limits_{n = 0}^{N-1} \Delta t \left[ \norm[\bigg]{\dfrac{\partial^3 p}{\partial t^3}(t^n)}^2_{\varepsilon^{-1}} \!\! + \norm[\bigg]{\dfrac{\partial^3 E}{\partial t^3}(t^n)}^2_{\varepsilon} \!\! + \norm[\bigg]{\dfrac{\partial^3 H}{\partial t^3}(t^n)}^2_{\mu} \right] \quad (\because \Delta t < 1) \\
\le C_0 h^{2 r} \int\limits_0^T \left[ \norm[\bigg]{\dfrac{\partial^3 p}{\partial t^3}(t^n)}^2_{\varepsilon^{-1}} \!\! + \norm[\bigg]{\dfrac{\partial^3 E}{\partial t^3}(t^n)}^2_{\varepsilon} \!\! + \norm[\bigg]{\dfrac{\partial^3 H}{\partial t^3}(t^n)}^2_{\mu} \right] dt \\
= C_0 h^{2 r} \left[ \norm[\bigg]{\dfrac{\partial^3 p}{\partial t^3}}^2_{L^2(0, T; L^2_{\varepsilon^{-1}}(\Omega))} \!\! + \norm[\bigg]{\dfrac{\partial^3 E}{\partial t^3}}^2_{L^2(0, T; L^2_{\varepsilon}(\Omega))} \!\! + \norm[\bigg]{\dfrac{\partial^3 H}{\partial t^3}}^2_{L^2(0, T; L^2_{\mu}(\Omega))} \right],
\end{multline*}
and likewise for the $L^2$ projection of the remainder terms:
\begin{multline*}
 18 \Delta t \sum\limits_{n = 0}^{N-1} \left[ \norm[\bigg]{(I - \Pi_h^0) R_p^n}_{\varepsilon^{-1}} \!\! + \norm[\bigg]{(I - \Pi_h^1) R_E^n}_{\varepsilon} \!\! + \norm[\bigg]{(I - \Pi_h^2) R_H^n}_{\mu} \right], \\
  \le C_0 h^{2 r} \sum\limits_{n = 0}^{N-1}  \Delta t \left[ \norm{R_p^n}_{L^2_{\varepsilon^{-1}}(\Omega)} + \norm{R_E^n}_{L^2_{\varepsilon}(\Omega)} + \norm{R_H^n}_{L^2_{\mu}(\Omega)} \right], \\
  \le \dfrac{4}{9}C_0 h^{2 r} \Delta t^8 \left[ \norm[\bigg]{\dfrac{\partial^5 p}{\partial t^5}}^2_{L^2(0, T; L^2_{\varepsilon^{-1}}(\Omega))} \!\! + \norm[\bigg]{\dfrac{\partial^5 E}{\partial t^5}}^2_{L^2(0, T; L^2_{\varepsilon}(\Omega))} \!\! + \norm[\bigg]{\dfrac{\partial^5 H}{\partial t^5}}^2_{L^2(0, T; L^2_{\mu}(\Omega))} \right].
\end{multline*}
Now, we take $u_h^0 = \Pi_h^k u_0$, and $u_h^1 = \Pi_h^k u_1$,for $k = 0, 1, 2$ respectively, for $u = \{p, E, H\}$, and we note that there exists a positive bounded constant $M$ such that:
\begin{multline*}
  18 \left(\norm[\bigg]{\dfrac{\partial p}{\partial t}}^2_{L^2(0, T; L^2_{\varepsilon^{-1}}(\Omega))} \!\! + \norm[\bigg]{\dfrac{\partial E}{\partial t}}^2_{L^2(0, T; L^2_{\varepsilon}(\Omega))} \!\! + \norm[\bigg]{\dfrac{\partial H}{\partial t}}^2_{L^2(0, T; L^2_{\mu}(\Omega))} \right) + \norm[\bigg]{\dfrac{\partial^3 p}{\partial t^3}}^2_{L^2(0, T; L^2_{\varepsilon^{-1}}(\Omega))} \!\! \\ + \norm[\bigg]{\dfrac{\partial^3 E}{\partial t^3}}^2_{L^2(0, T; L^2_{\varepsilon}(\Omega))} \!\! + \norm[\bigg]{\dfrac{\partial^3 H}{\partial t^3}}^2_{L^2(0, T; L^2_{\mu}(\Omega))} + \dfrac{8}{9} \left(\norm[\bigg]{\dfrac{\partial^5 p}{\partial t^5}}^2_{L^2(0, T; L^2_{\varepsilon^{-1}}(\Omega))} \!\! + \norm[\bigg]{\dfrac{\partial^5 E}{\partial t^5}}^2_{L^2(0, T; L^2_{\varepsilon}(\Omega))} \!\! \right. \\ \left.+ \norm[\bigg]{\dfrac{\partial^5 H}{\partial t^5}}^2_{L^2(0, T; L^2_{\mu}(\Omega))} \right) + \dfrac{1}{504} \left( \norm[\bigg]{\dfrac{\partial^4 \left(\nabla p \right)}{\partial t^4}}^2_{L^2(0, T; L^2_{\varepsilon^{-1}}(\Omega))} \!\! + \norm[\bigg]{\dfrac{\partial^4 (\nabla \cdot E)}{\partial t^4}}^2_{L^2(0, T; L^2_\varepsilon(\Omega))} \!\! \right. \\ \left. + \varepsilon^{-1} \mu^{-1} \left(\norm[\bigg]{\dfrac{\partial^4 (\nabla \times E)}{\partial t^4}}^2_{L^2(0, T; L^2_{\varepsilon}(\Omega))} \!\! + \norm[\bigg]{\dfrac{\partial^4 (\nabla \times H)}{\partial t^4}}^2_{L^2(0, T; L^2_\mu(\Omega))} \right)\right) \leq M.
\end{multline*}
Consequently, we have the following estimate:
\[
  \norm{\eta_h^{N-1}}^2_{\varepsilon^{-1}} + \norm{\zeta_h^{N-1}}^2_{\varepsilon} + \norm{\xi_h^{N-1}}^2_{\mu} \le \widetilde{C} \left[h^{2 r} + h^{2 r} \Delta t^8 + \Delta t^8 \right],
\]
where $\widetilde{C} = M\exp(36 T)$ which then using the equivalence between $1$- and $2$-norms gives us that:
\[
  \norm{\eta_h^{N-1}}_{\varepsilon^{-1}} + \norm{\zeta_h^{N-1}}_{\varepsilon} + \norm{\xi_h^{N-1}}_{\mu} \le C_1 \left[h^{r} + h^r \Delta t^4 + \Delta t^4 \right],
\]
where $C_1 = \sqrt{3 \widetilde{C}}$. Also, there are positive bounded constants $\widetilde{C_2}$, $\widetilde{C_3}$ and $\widetilde{C_4}$ such that:
\begin{alignat*}{4}
  \norm{\eta^{N-1}}_{\varepsilon^{-1}} &= \norm{(I - \Pi_h^0) p(t^{N-1})}_{\varepsilon^{-1}} &&\le \widetilde{C_2} h^r \norm{p(t^{N-1})}_{L^2_{\varepsilon^{-1}}(\Omega)} &&\implies \norm{\eta^{N-1}}_{\varepsilon^{-1}} &&\le C_2 h^r, \\
  \norm{\zeta^{N-1}}_{\varepsilon} &= \norm{(I - \Pi_h^1) E(t^{N-1})}_{\varepsilon} &&\le \widetilde{C_3} h^r \norm{E(t^{N-1})}_{L^2_{\varepsilon}(\Omega)} &&\implies \norm{\zeta^{N-1}}_{\varepsilon} &&\le C_3 h^r, \\
  \norm{\xi^{N-1}}_\mu &= \norm{(I - \Pi_h^2) H(t^{N-1})}_\mu &&\le \widetilde{C_4} h^r \norm{H(t^{N-1})}_{L^2_{\mu}(\Omega)} &&\implies \norm{\xi^{N-1}}_\mu &&\le C_4 h^r,
\end{alignat*}
in which $C_2 = \widetilde{C_2} \norm{p(t^{N-1})}_{L^2_{\varepsilon^{-1}}(\Omega)}$, $C_3 = \widetilde{C_3} \norm{E(t^{N-1})}_{L^2_{\varepsilon}(\Omega)}$ and $C_4 = \widetilde{C_4} \norm{H(t^{N-1})}_{L^2_{\mu}(\Omega)}$ are all bounded positive constants due to Theorem~\ref{thm:dscrt_enrgy_cnsrvtn_ts4}. Finally, this provides us with our desired result by choosing $C = C_1 + C_2 + C_3 + C_4$:
\begin{align*}
  \norm{e_{p_h}^{N - 1}}_{\varepsilon^{-1}} + \norm{e_{E_h}^{N - 1}}_{\varepsilon} + \norm{e_{H_h}^{N - 1}}_{\mu} &= \norm{\eta^{N - 1} - \eta^{N -1}_h}_{\varepsilon^{-1}} + \norm{\zeta^{N - 1} - \zeta^{N - 1}_h}_{\varepsilon} + \norm{\xi^{N -1} - \xi^{N -1}_h}_\mu, \\
  &\le C \left[ (\Delta t )^4 + h^r + h^r \Delta t^4 \right]. \qedhere
\end{align*}
\end{proof}

\section{Numerical Results and Future Outlook}\label{sec:numerics}

We now present two validating computational examples in $\R^2$.

\medskip \noindent \textbf{Example 1}: This problem consists of the Maxwell's system being posed on a unit square in $\R^2$ with the analytical solution, material parameters, initial and final times as below:
\[
  p = 0, \quad E = %
  \begin{bNiceMatrix}
    \sin \pi y \cos \pi t \\
    \sin \pi x \cos \pi t
  \end{bNiceMatrix}, %
  \quad H = (\cos \pi y - \cos \pi x) \sin \pi t,
\]
\[\epsilon = 1, \quad \mu = 1, \quad T_{\min} = 0 , \quad T_{\max} = 1.\]
The computational results of solving this problem with the choices LF$_4$ and TS$_4$ in conjunction with linear or quadratic FEEC basis are all shown in Figures~\labelcref{fig:example1_lf4_linear,fig:example1_lf4_quadratic,fig:example1_ts4_linear,fig:example1_ts4_quadratic}.

\medskip \noindent \textbf{Example 2}: This problem is also on the unit square in $\R^2$ with $p \ne 0$, and with non homogeneous boundary conditions for both $p$ and $E$ and homogeneous boundary conditions for $H$.
\[
  p = \left(\cos \pi x + \cos \pi y\right) \sin \pi t,
\]
\[
 E = %
  \begin{bNiceMatrix}
    \sin \pi (\sqrt{2} t - x - y) - \sin \pi x \cos \pi t \\
    -\sin \pi (\sqrt{2} t - x - y) - \sin \pi y \cos \pi t
  \end{bNiceMatrix}, %
  \quad H = -\sqrt{2} \sin \pi (\sqrt{2} t - x - y),
\]
\[\epsilon = 1, \quad \mu = 1, \quad T_{\min} = 0 , \quad T_{\max} = 1.\]
The computational results of solving this problem with the choices LF$_4$ and TS$_4$ in conjunction with linear or quadratic FEEC basis are all shown in Figures~\labelcref{fig:example2_lf4_linear,fig:example2_lf4_quadratic,fig:example2_ts4_linear,fig:example2_ts4_quadratic}.

In the end, we wish to conclude by summarizing that we have outlined two different strategies for the fourth-order time discretization of the three-field formulation of Maxwell's equations that have resulted in two implicit time integration schemes which we have demonstrated to be energy conserving and convergent. We leave it to a future work to further generalize this to arbitrary-order schemes for the spatial strategy and a plethora of such higher-order schemes arising from the temporal strategy.

\printbibliography

@book{Arnold2018,
    author = {Arnold, Douglas N.},
    title = {Finite Element Exterior Calculus},
	publisher = {Society for Industrial and Applied Mathematics},
	year = {2018},
	doi = {10.1137/1.9781611975543},
	address = {Philadelphia, PA},
	edition   = {},
}

@article {AdPeZi2013,
    AUTHOR = {Adler, J. H. and Petkov, V. and Zikatanov, L. T.},
     TITLE = {Numerical approximation of asymptotically disappearing
              solutions of {M}axwell's equations},
   JOURNAL = {SIAM J. Sci. Comput.},
  FJOURNAL = {SIAM Journal on Scientific Computing},
    VOLUME = {35},
      YEAR = {2013},
    NUMBER = {5},
     PAGES = {S386--S401},
      ISSN = {1064-8275,1095-7197},
   MRCLASS = {65M60 (78M10)},
  MRNUMBER = {3120778},
MRREVIEWER = {B\"{u}lent\ Karas\"{o}zen},
       DOI = {10.1137/120879385},
       URL = {https://doi.org/10.1137/120879385},
}

@article {AdHuZi2017,
    AUTHOR = {Adler, J. H. and Hu, X. and Zikatanov, L. T.},
     TITLE = {Robust solvers for {M}axwell's equations with dissipative
              boundary conditions},
   JOURNAL = {SIAM J. Sci. Comput.},
  FJOURNAL = {SIAM Journal on Scientific Computing},
    VOLUME = {39},
      YEAR = {2017},
    NUMBER = {5},
     PAGES = {S3--S23},
      ISSN = {1064-8275,1095-7197},
   MRCLASS = {65M60 (35Q61 65M22 78M25)},
  MRNUMBER = {3716564},
MRREVIEWER = {Bal\'{a}zs\ Kov\'{a}cs},
       DOI = {10.1137/16M1073339},
       URL = {https://doi.org/10.1137/16M1073339},
}

@article {AdCaHuZi2021,
    AUTHOR = {Adler, James H. and Cavanaugh, Casey and Hu, Xiaozhe and
              Zikatanov, Ludmil T.},
     TITLE = {A finite-element framework for a mimetic finite-difference
              discretization of {M}axwell's equations},
   JOURNAL = {SIAM J. Sci. Comput.},
  FJOURNAL = {SIAM Journal on Scientific Computing},
    VOLUME = {43},
      YEAR = {2021},
    NUMBER = {4},
     PAGES = {A2638--A2659},
      ISSN = {1064-8275,1095-7197},
   MRCLASS = {65M60 (35Q61 65F08 78M10)},
  MRNUMBER = {4292006},
MRREVIEWER = {Nicolae\ Pop},
       DOI = {10.1137/20M1382568},
       URL = {https://doi.org/10.1137/20M1382568},
}

@ARTICLE{AnAn2019,
  author={Anees, Asad and Angermann, Lutz},
  journal={IEEE Access}, 
  title={Time Domain Finite Element Method for Maxwell’s Equations}, 
  year={2019},
  volume={7},
  number={},
  pages={63852-63867},
  doi={10.1109/ACCESS.2019.2916394}
}

@article {Schoberl2008,
    AUTHOR = {Sch\"{o}berl, Joachim},
     TITLE = {A posteriori error estimates for {M}axwell equations},
   JOURNAL = {Math. Comp.},
  FJOURNAL = {Mathematics of Computation},
    VOLUME = {77},
      YEAR = {2008},
    NUMBER = {262},
     PAGES = {633--649},
      ISSN = {0025-5718,1088-6842},
   MRCLASS = {78M10},
  MRNUMBER = {2373173},
MRREVIEWER = {Snorre\ H.\ Christiansen},
       DOI = {10.1090/S0025-5718-07-02030-3},
       URL = {https://doi.org/10.1090/S0025-5718-07-02030-3},
}

@article {Christiansen2007,
    AUTHOR = {Christiansen, Snorre H.},
     TITLE = {Stability of {H}odge decompositions in finite element spaces
              of differential forms in arbitrary dimension},
   JOURNAL = {Numer. Math.},
  FJOURNAL = {Numerische Mathematik},
    VOLUME = {107},
      YEAR = {2007},
    NUMBER = {1},
     PAGES = {87--106},
      ISSN = {0029-599X,0945-3245},
   MRCLASS = {65N30 (58A14)},
  MRNUMBER = {2317829},
MRREVIEWER = {Virginie\ Bonnaillie No\"{e}l},
       DOI = {10.1007/s00211-007-0081-2},
       URL = {https://doi.org/10.1007/s00211-007-0081-2},
}

@article {ArFaWi2006,
    AUTHOR = {Arnold, Douglas N. and Falk, Richard S. and Winther, Ragnar},
     TITLE = {Finite element exterior calculus, homological techniques, and
              applications},
   JOURNAL = {Acta Numer.},
  FJOURNAL = {Acta Numerica},
    VOLUME = {15},
      YEAR = {2006},
     PAGES = {1--155},
      ISSN = {0962-4929,1474-0508},
      ISBN = {0-521-86815-7},
   MRCLASS = {58A15 (47N40 65N30 74G15 74S05)},
  MRNUMBER = {2269741},
MRREVIEWER = {Thomas\ Garrity},
       DOI = {10.1017/S0962492906210018},
       URL = {https://doi.org/10.1017/S0962492906210018},
}

@book {ErGu2021,
    AUTHOR = {Ern, Alexandre and Guermond, Jean-Luc},
     TITLE = {Finite elements {I}---{A}pproximation and interpolation},
    SERIES = {Texts in Applied Mathematics},
    VOLUME = {72},
 PUBLISHER = {Springer, Cham},
      YEAR = {2021},
     PAGES = {xii+325},
      % ISBN = {978-3-030-56340-0; 978-3-030-56341-7},
   MRCLASS = {65-01},
  MRNUMBER = {4242224},
       DOI = {10.1007/978-3-030-56341-7},
       URL = {https://doi.org/10.1007/978-3-030-56341-7},
}

@book {Brenner2008,
    AUTHOR = {Brenner, Susanne C. and Scott, L. Ridgway},
     TITLE = {The mathematical theory of finite element methods},
    SERIES = {Texts in Applied Mathematics},
    VOLUME = {15},
   EDITION = {Third},
 PUBLISHER = {Springer, New York},
      YEAR = {2008},
     PAGES = {xviii+397},
      ISBN = {978-0-387-75933-3},
   MRCLASS = {65-01 (65-02)},
  MRNUMBER = {2373954},
       DOI = {10.1007/978-0-387-75934-0},
       URL = {https://doi.org/10.1007/978-0-387-75934-0},
}

@article {ArKa2025,
    AUTHOR = {Arya, Archana and Kalyanaraman, Kaushik},
     TITLE = {Energy conserving and second order time implicit mixed finite element discretizations of {M}axwell's equations},
   JOURNAL = {J. Sci. Comput.},
  FJOURNAL = {Journal of Scientific Computing},
    VOLUME = {105},
      YEAR = {2025},
    NUMBER = {2},
     PAGES = {Paper No. 46, 51},
      ISSN = {0885-7474,1573-7691},
   MRCLASS = {65M06 (65M12 65M15 65M22 65M60)},
  MRNUMBER = {4964421},
       DOI = {10.1007/s10915-025-03032-7},
       URL = {https://doi.org/10.1007/s10915-025-03032-7},
}

@article {Fahs2009,
    AUTHOR = {Fahs, Hassan},
     TITLE = {High-order leap-frog based discontinuous {G}alerkin method for
              the time-domain {M}axwell equations on non-conforming
              simplicial meshes},
   JOURNAL = {Numer. Math. Theory Methods Appl.},
  FJOURNAL = {Numerical Mathematics. Theory, Methods and Applications},
    VOLUME = {2},
      YEAR = {2009},
    NUMBER = {3},
     PAGES = {275--300},
      ISSN = {1004-8979,2079-7338},
   MRCLASS = {65M60 (78M10)},
  MRNUMBER = {2605861},
MRREVIEWER = {Minvydas\ Ragulskis},
       DOI = {10.4208/nmtma.2009.m8018},
       URL = {https://doi.org/10.4208/nmtma.2009.m8018},
}

@article{Young2001,
author = {Young, J. L.},
title = {High-order, leapfrog methodology for the temporally dependent Maxwell's equations},
journal = {Radio Science},
volume = {36},
number = {1},
pages = {9-17},
doi = {https://doi.org/10.1029/2000RS002503},
url = {https://agupubs.onlinelibrary.wiley.com/doi/abs/10.1029/2000RS002503},
eprint = {https://agupubs.onlinelibrary.wiley.com/doi/pdf/10.1029/2000RS002503},
abstract = {A high-order, leapfrog integrator is presented and analyzed for Maxwell's time domain equations. The integrator is shown to be quite robust in terms of its efficiency and memory demands. Moreover, the integrator retains the original simplicity of the traditional second-order, leapfrog integrator. Rigorous Fourier analyses are presented to quantify the dispersion, dissipation, and stability properties of the integrator. To limit the scope of the presentation, the 2×2, 2×4, 4×2, and 4×4 schemes are examined and compared. Here the first digit denotes temporal accuracy; the second digit denotes spatial accuracy. Numerical demonstrations, using the three-dimensional rectangular waveguide as the object under test, are provided to further substantiate the theoretical analysis.},
year = {2001}
}

@article{BrTuTs2018,
	author = {Britt, Steven and Turkel, Eli and Tsynkov, Semyon},
	doi = {10.1007/s10915-017-0639-9},
	% isbn = {1573-7691},
	journal = {Journal of Scientific Computing},
	number = {2},
	pages = {777--811},
	title = {A High Order Compact Time/Space Finite Difference Scheme for the Wave Equation with Variable Speed of Sound},
	url = {https://doi.org/10.1007/s10915-017-0639-9},
	volume = {76},
	year = {2018},
}

@inproceedings{MiTiTo2001,
  title={ADER: arbitrary-order non-oscillatory advection schemes},
  author={Millington, RC and Titarev, VA and Toro, EF},
  booktitle={Hyperbolic Problems: Theory, Numerics, Applications: Eighth International Conference in Magdeburg, February/March 2000 Volume II},
  pages={723--732},
  year={2001},
  organization={Springer}
}

@article{MoBa2020,
title = {A simplified Cauchy-Kowalewskaya procedure for the local implicit solution of generalized Riemann problems of hyperbolic balance laws},
journal = {Computers \& Fluids},
volume = {202},
pages = {104490},
year = {2020},
issn = {0045-7930},
doi = {https://doi.org/10.1016/j.compfluid.2020.104490},
author = {Gino I. Montecinos and Dinshaw S. Balsara},
keywords = {Finite volume schemes, ADER schemes, Generalized Riemann Problems, stiff source terms}
}

\begin{figure}[htb]
  \centering
  \includegraphics[scale=1]{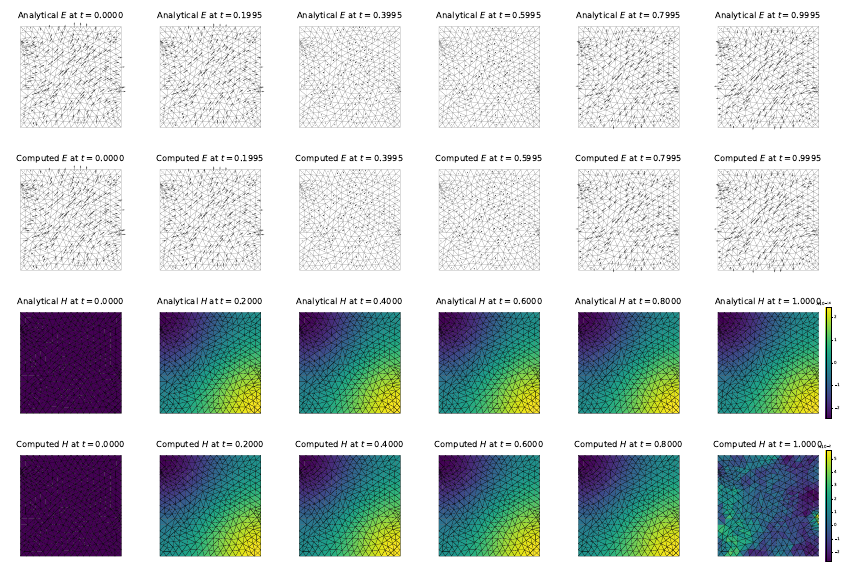}
  \caption{\textbf{Linear finite elements with LF$_4$}: Plots of solutions at different time steps for the problem described in \textbf{Example 1} of Section~\ref{sec:numerics} using the LF$_4$ and linear Whitney forms as basis for the FEEC spaces. The solutions for $p$ are not shown due to them being identically equal to $0$. The computed solutions for $E$ and $H$ visually match with the analytical solutions near identically.}
  \label{fig:example1_lf4_linear}
\end{figure}

\begin{figure}[htb]
  \centering
  \includegraphics[scale=1]{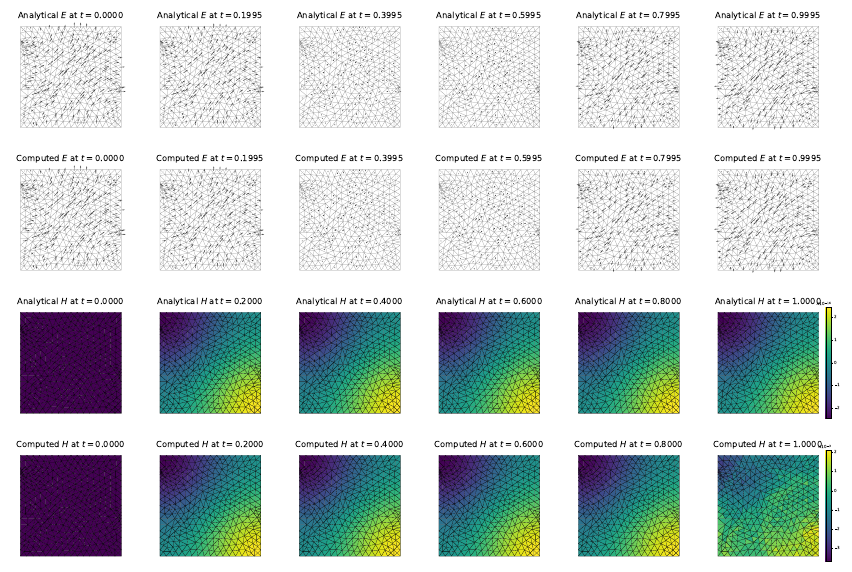}
  \caption{\textbf{Quadratic finite elements with LF$_4$}: Plots of solutions at different time steps for \textbf{Example 1} of Section~\ref{sec:numerics} using LF$_4$ and quadratic Whitney forms as basis for the FEEC spaces. The solutions for $p$ are not shown due to them being identically equal to $0$. The computed solutions for $E$ and $H$ visually match with the analytical solutions near identically.}
  \label{fig:example1_lf4_quadratic}
\end{figure}

\begin{figure}[htb]
  \centering
  \includegraphics[scale=1]{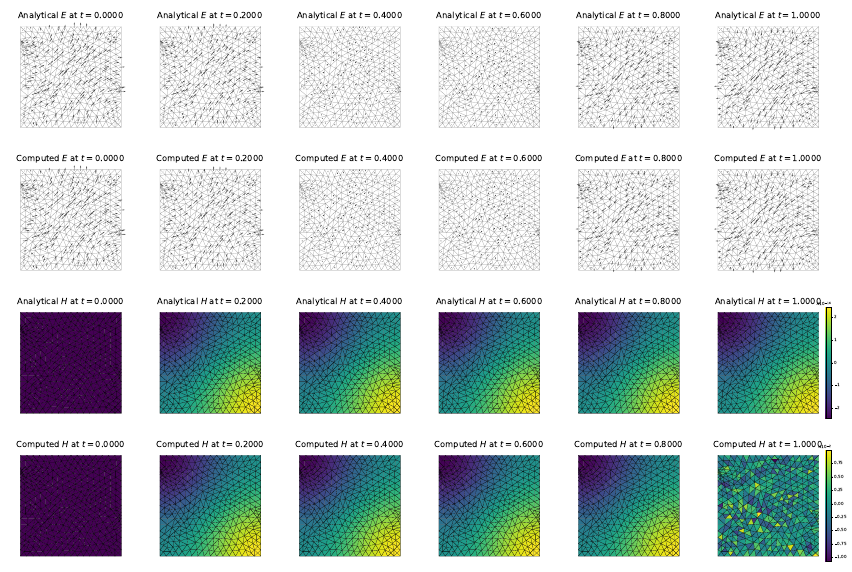}
  \caption{\textbf{Linear finite elements with TS$_4$}: Plots of solutions at different time steps for \textbf{Example 1} of Section~\ref{sec:numerics} using TS$_4$ and linear Whitney forms for the FEEC spaces. The solutions for $p$ are again not shown due to them being identically $0$.}
  \label{fig:example1_ts4_linear}
\end{figure}

\begin{figure}[htb]
  \centering
  \includegraphics[scale=1]{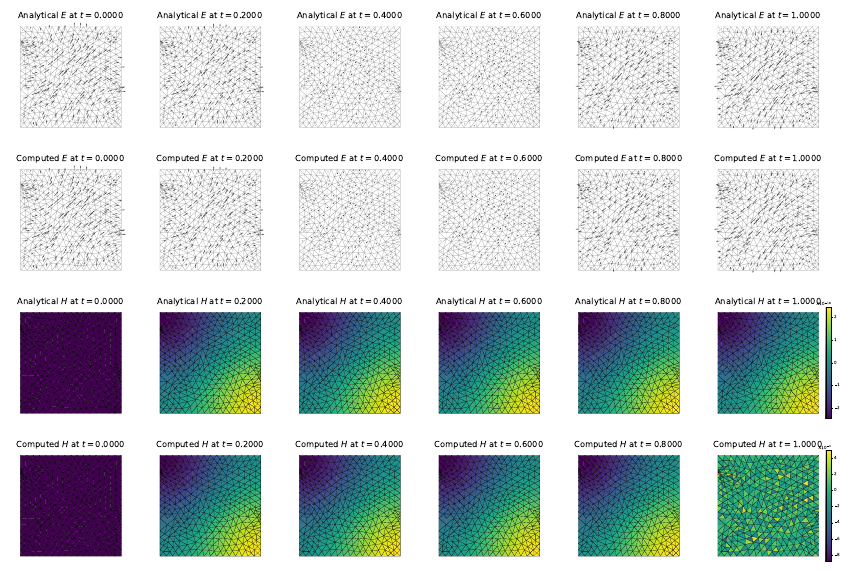}
  \caption{\textbf{Quadratic finite elements with TS$_4$}: Plots of solutions at different time steps for \textbf{Example 1} of Section~\ref{sec:numerics} using TS$_4$ and quadratic Whitney forms for the FEEC spaces.}
  \label{fig:example1_ts4_quadratic}
\end{figure}

\begin{figure}[htb]
  \centering
  \includegraphics[scale=1]{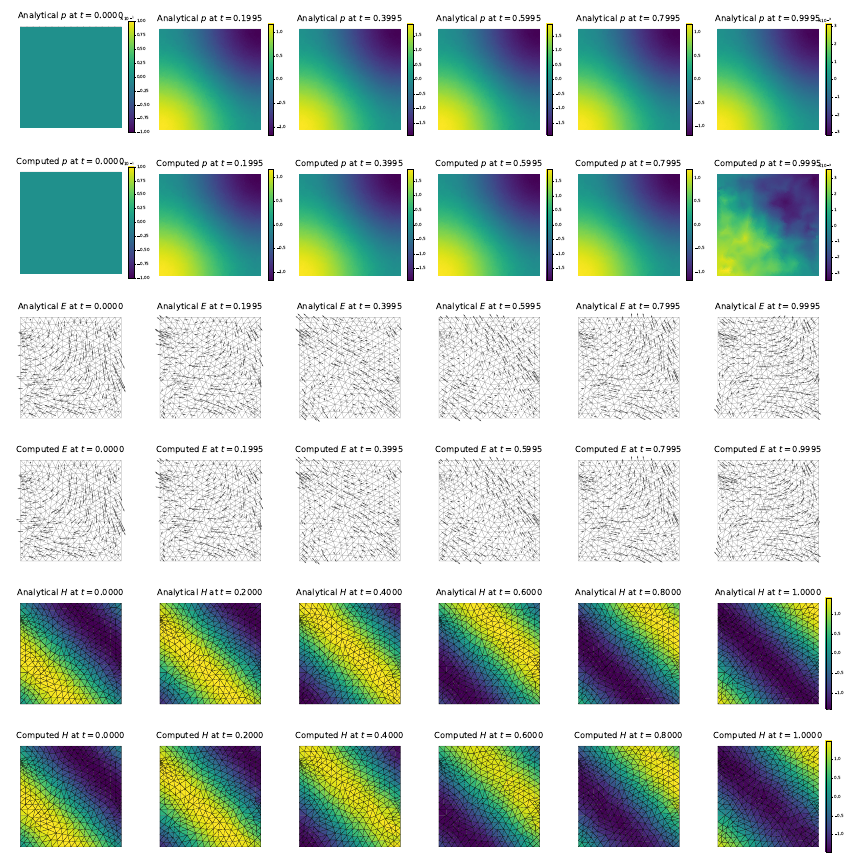}
  \caption{\textbf{Linear finite elements with LF$_4$}: Plots of solutions at different time steps for \textbf{Example 2} of Section~\ref{sec:numerics} using the LF$_4$ and linear Whitney forms in FEEC. The computed solutions for $p$, $E$ and $H$ visually match with the analytical solutions near identically.}
  \label{fig:example2_lf4_linear}
\end{figure}

\begin{figure}[htb]
  \centering
  \includegraphics[scale=1]{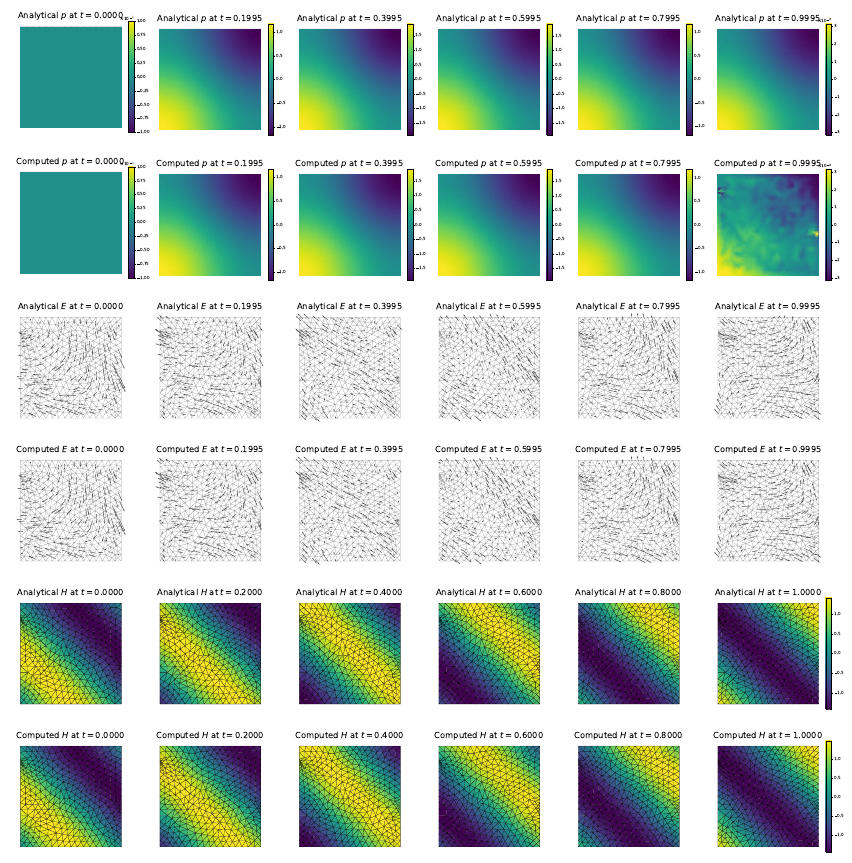}
  \caption{\textbf{Quadratic finite elements with LF$_4$}: Plots of solutions at different time steps for \textbf{Example 2} of Section~\ref{sec:numerics} using the LF$_4$ and quadratic Whitney forms in FEEC. The computed solutions for $p$, $E$ and $H$ visually match with the analytical solutions near identically.}
  \label{fig:example2_lf4_quadratic}
\end{figure}

\begin{figure}[htb]
  \centering
  \includegraphics[scale=1]{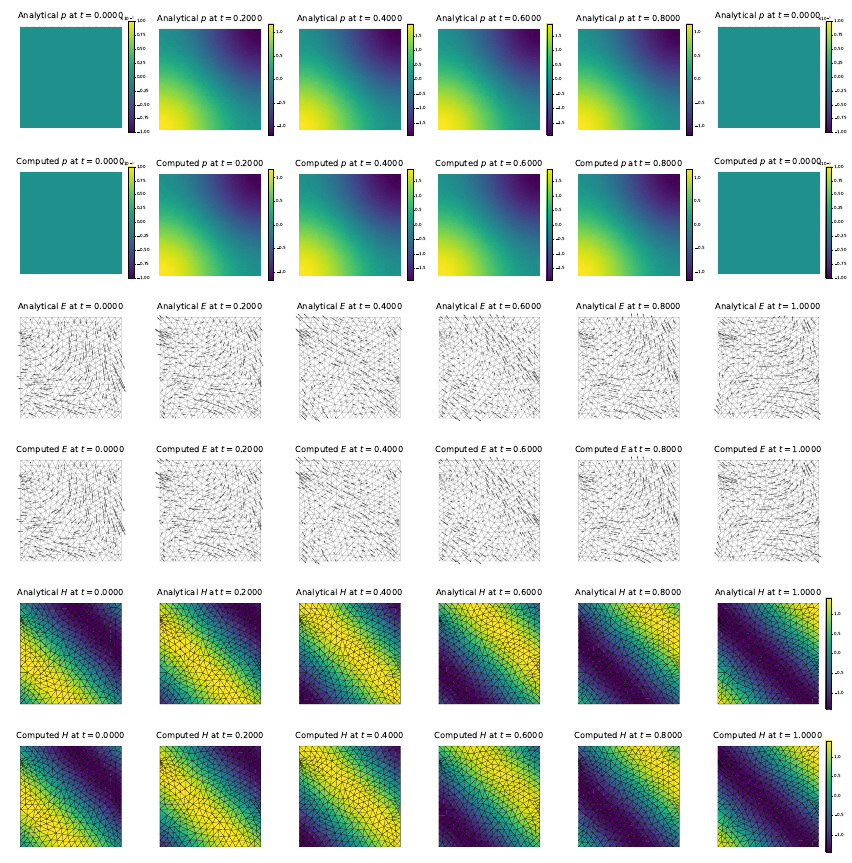}
  \caption{\textbf{Linear finite elements with TS$_4$}: Plots of solutions at different time steps for \textbf{Example 2} of Section~\ref{sec:numerics} using the TS$_4$ and linear Whitney forms in FEEC. The computed solutions for $p$, $E$ and $H$ again visually match with the analytical solutions near identically.}
  \label{fig:example2_ts4_linear}
\end{figure}

\begin{figure}[htb]
  \centering
  \includegraphics[scale=1]{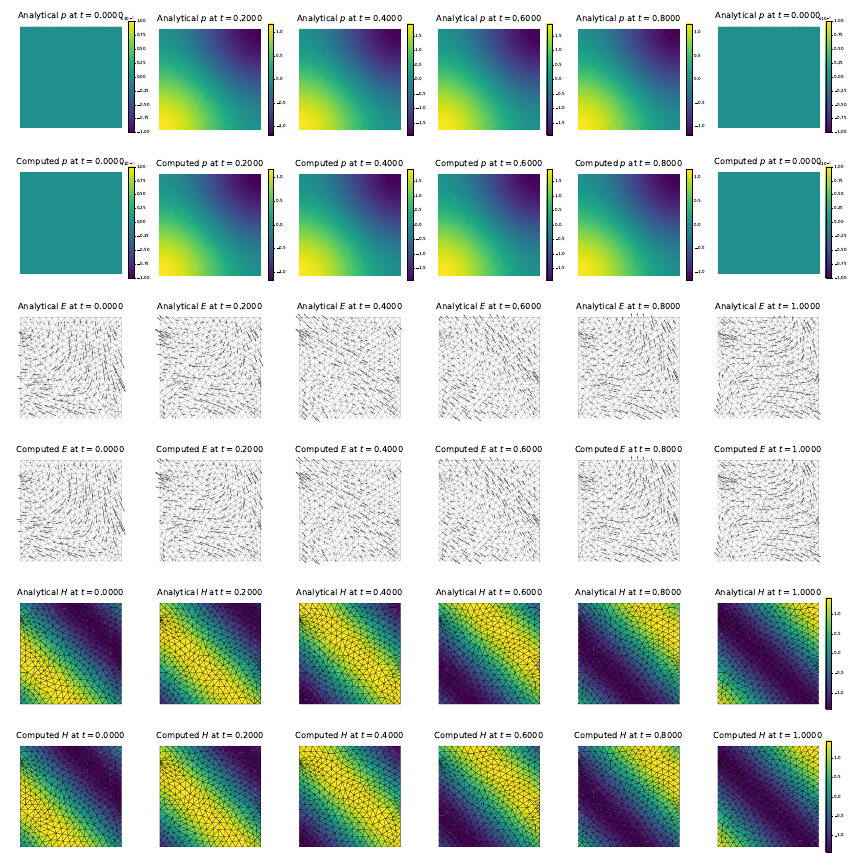}
  \caption{\textbf{Quadratic finite elements with TS$_4$}: Plots of solutions at different time steps for \textbf{Example 2} of Section~\ref{sec:numerics} using the TS$_4$ and quadratic Whitney forms in FEEC. The computed solutions for $p$, $E$ and $H$ yet again visually match with the analytical solutions near identically.}
  \label{fig:example2_ts4_quadratic}
\end{figure}

\end{document}